\documentclass[11pt,fleqn]{article}
\usepackage{amsfonts}
\usepackage{amssymb}
\usepackage{amsthm}
\usepackage{relsize}
\usepackage[leqno]{amsmath}
\usepackage{multicol}
\usepackage{blkarray}
\usepackage[export]{adjustbox}
\usepackage{array}
\usepackage{framed}

\makeatletter
\newcommand{\leqnomode}{\tagsleft@true}
\newcommand{\reqnomode}{\tagsleft@false}
\makeatother

\newcommand{\R}{\mathbb{R}}

\usepackage[UKenglish]{babel}
\usepackage{enumerate}
\usepackage{hyperref}
\usepackage{latexsym}
\usepackage{lmodern}

\theoremstyle{plain}
\newtheorem{theorem}{Theorem}[section]

\newtheorem{lemma}[theorem]{Lemma}
\newtheorem{corollary}[theorem]{Corollary}
\newtheorem{proposition}[theorem]{Proposition}

\newtheorem{observation}[theorem]{Observation}
\theoremstyle{remark}
\newtheorem{remark}{Remark}

\usepackage{tikz}
\usetikzlibrary{arrows,snakes,backgrounds}
\usetikzlibrary{decorations.pathmorphing}
\usepackage{graphicx}

\providecommand{\customgenericname}{}
\newcommand{\newcustomtheorem}[2]{%
  \newenvironment{#1}[1]
  {%
   \renewcommand\customgenericname{#2}%
   \renewcommand\theinnercustomgeneric{##1}%
   \innercustomgeneric
  }
  {\endinnercustomgeneric}
}

\newcustomtheorem{customlemma}{Lemma}

\theoremstyle{plain}

\newenvironment{customtheorem}[1]
  {\innercustomthm}
  {\endinnercustomthm}

\clubpenalty = 10000
\widowpenalty = 10000
\displaywidowpenalty = 10000

\usepackage[margin=2.25cm]{geometry}
\usepackage{marvosym}

\newcommand*\samethanks[1][\value{footnote}]{\footnotemark[#1]}

\title{A Sum of Squares Characterization of Perfect Graphs}

\author{Amir Ali Ahmadi\thanks{A. A. Ahmadi and C. Dibek are with the department of Operations Research and Financial Engineering,
		Princeton University, USA.
		Emails: {\tt\small aaa@princeton.edu}; {\tt\small cdibek@princeton.edu} \newline This work was partially supported by an AFOSR MURI award, the DARPA Young Faculty Award, the Princeton SEAS Innovation Award, the NSF CAREER Award, the Google Faculty Award, and the Sloan Fellowship.} \ \ \ \ \ \ \ \ \
Cemil Dibek\samethanks
}

\date{}
\begin{document}
\maketitle

\begin{abstract}
We present an algebraic characterization of perfect graphs, i.e., graphs for which the clique number and the chromatic number coincide for every induced subgraph. We show that a graph is perfect if and only if certain nonnegative polynomials associated with the graph are sums of squares. As a byproduct, we obtain several infinite families of nonnegative polynomials that are not sums of squares through graph-theoretic constructions. We also characterize graphs for which the associated polynomials belong to certain structured subsets of sum of squares polynomials. Finally, we reformulate some well-known results from the theory of perfect graphs as statements about sum of squares proofs of nonnegativity of certain polynomials.\\

\noindent \textit{\textbf{Keywords:} Nonnegative and sum of squares polynomials,  perfect graphs, matrix copositivity, semidefinite programming, convex relaxations for the clique number.}
\end{abstract}

\reqnomode
\section{Introduction} \label{sec:intro}

A graph is \emph{perfect} if for each of its induced subgraphs, the chromatic number equals the cardinality of a largest clique. Perfect graphs, introduced by Berge in 1960, have elegant theoretical properties and curious connections with linear, integer, and semidefinite programming. For instance, perfect graphs appear in the study of exactness of linear programming relaxations of integer programs. As an example, for a matrix $A \in \{0,1\}^{m \times n}$, all vertices of the polytope $\{x \in \R^n : Ax \leq 1, x \geq 0\}$ are integral if and only if the undominated rows of $A$ are the incidence vectors of the maximal cliques of a perfect graph~\cite{CRST, Chvatal}. Moreover, several combinatorial problems that are NP-hard on general graphs can be solved efficiently on perfect graphs using semidefinite programming \cite{GLS}. Examples include the maximum independent set and the minimum clique cover problems. More generally, perfect graphs have been the subject of much research in recent decades due to the fact that they are at the crossroad of several mathematical disciplines, including graph theory, information theory, combinatorial optimization, polyhedral and convex geometry, and semidefinite programming~\cite{Chvatal, CKLMS, Fulkerson, GLS, GLS2, Lovasz, GoPaTh}.

The second notion of interest to this paper is that of sum of squares polynomials. A polynomial is a \emph{sum of squares} (sos) if it can be written as a sum of squares of some other polynomials. There has been a growing interest in sos polynomials recently due to the fact that they provide semidefinite programming-based sufficient conditions for problems involving nonnegative polynomials. It is well known that several important problems in applied and computational mathematics can be formulated as optimization problems over the set of nonnegative polynomials. Although these problems are generally intractable to solve exactly, they can be efficiently approximated by replacing nonnegativity constraints with sum of squares requirements. By connecting ideas from real algebraic geometry and semidefinite programming, sum of squares polynomials have significantly impacted both discrete and continuous optimization over the last two decades; see, e.g.,~\cite{Lasserre2, ParriloMP, BPT, LasserreBook, ghsurvey, Laurentsurvey}.

In this work, we introduce and study the notion of sos-perfectness, a notion that brings together perfect graphs and sos polynomials. For a graph $G = (V, E)$ with clique number $\omega(G)$, we define the following quartic (homogeneous) polynomial in the variables $x = (x_1, \dots, x_{|V(G)|})^T$:
\begin{equation}
\begin{aligned}
\hspace{2.9cm} 
p_G(x) \mathrel{\mathop:}= -2 \: \omega(G) \mathlarger{\sum}\limits_{ij \in E(G)} x_i^2x_j^2 + (\omega(G) - 1) \left( \mathlarger{\sum}\limits_{i=1}^{|V(G)|} x_i^2 \right)^2.
\end{aligned}
\label{eqn:our_polynomial}
\end{equation}
It turns out that for every graph $G$, the polynomial $p_G(x)$ is nonnegative by construction. We say that a graph $G$ is \emph{sos-perfect} if $p_H(x)$ is sos for every induced subgraph $H$ of $G$. In Section~\ref{sec:sos_perfect} of this paper, we prove the following theorem.

\begin{theorem}
A graph is perfect if and only if it is sos-perfect.
\label{thm:main_thm}
\end{theorem}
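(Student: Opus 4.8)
The plan is to first convert ``$p_G$ is sos'' into a matrix condition and then to treat the two implications separately. Write $J$ for the all-ones matrix, $\mathbf{1}$ for the all-ones vector, and $A_G$ for the adjacency matrix of $G$. Expanding \eqref{eqn:our_polynomial} gives $p_G(x)=\sum_{i,j}Q_{ij}\,x_i^2x_j^2$ with $Q=Q_G:=(\omega(G)-1)J-\omega(G)A_G=\omega(G)(I+A_{\overline{G}})-J$; this matrix is copositive (Motzkin--Straus), matching the stated nonnegativity of $p_G$. A standard argument --- average a Gram matrix of $p_G$ in the degree-$2$ monomial basis over the sign-flip group $\{\pm1\}^{|V(G)|}$ and read off the block structure this forces --- shows that \emph{any} quartic form $\sum_{i,j}M_{ij}x_i^2x_j^2$ is sos if and only if $M=P+N$ with $P\succeq 0$ and $N$ entrywise nonnegative. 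Hence $p_G$ is sos exactly when $Q_G$ lies in the cone $\mathcal{SPN}$ of such sums, and Theorem~\ref{thm:main_thm} becomes the claim that $G$ is perfect if and only if $Q_H\in\mathcal{SPN}$ for every induced subgraph $H$ of $G$.

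For ``perfect $\Rightarrow$ sos-perfect'', since perfectness is hereditary it suffices to show that $\chi(H)=\omega(H)$ implies $Q_H\in\mathcal{SPN}$. Fix a proper colouring of $H$ with $\omega:=\omega(H)$ colours, with classes $S_1,\dots,S_\omega$ partitioning $V(H)$, put $M_{\mathrm{col}}:=\sum_{t=1}^{\omega}\mathbf{1}_{S_t}\mathbf{1}_{S_t}^T$ (so $(M_{\mathrm{col}})_{ij}=1$ exactly when $i,j$ receive the same colour), and set $P:=\omega M_{\mathrm{col}}-J$ and $N:=Q_H-P$. Since $\mathbf{1}=\sum_t\mathbf{1}_{S_t}$, we have $v^TPv=\omega\sum_t(v^T\mathbf{1}_{S_t})^2-\bigl(\sum_t v^T\mathbf{1}_{S_t}\bigr)^2\ge 0$ by Cauchy--Schwarz applied to the $\omega$ numbers $v^T\mathbf{1}_{S_t}$, so $P\succeq 0$; and $N=\omega(J-A_H-M_{\mathrm{col}})$ has each entry equal to $0$ (on the diagonal, and on edges, whose endpoints get distinct colours) or to $\omega$ (on non-edges joining distinct colour classes), so $N\ge 0$. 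Therefore $Q_H=P+N\in\mathcal{SPN}$. (As a byproduct this recovers the known exactness of Schrijver's $\vartheta'$ on perfect graphs.)

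For ``sos-perfect $\Rightarrow$ perfect'' I argue the contrapositive: if $G$ is not perfect then, by the strong perfect graph theorem~\cite{CRST}, $G$ has an induced odd hole $H=C_{2k+1}$ or induced odd antihole $H=\overline{C_{2k+1}}$ with $k\ge 2$, and it is enough to show $Q_H\notin\mathcal{SPN}$ in each case, since that prevents $p_H$ from being sos. Suppose for contradiction $Q_H=P+N$ with $P\succeq 0$ and $N\ge 0$ entrywise, and write $P=V^TV$ with columns $v_i$, so $v_i^Tv_j=P_{ij}\le Q_{ij}$ and $|v_i^Tv_j|\le\|v_i\|\,\|v_j\|$. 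In the hole case $\omega(H)=2$, giving $\|v_i\|^2\le 1$ for all $i$ and $v_i^Tv_j\le -1$ on the cycle's edges; these force $\|v_i\|=1$ for all $i$ (the cycle is connected) and then $v_j=-v_i$ around the cycle, impossible for an odd cycle. In the antihole case $\omega(H)=k$, giving $\|v_i\|^2\le k-1$ for all $i$ and $v_i^Tv_j\le -1$ on every edge of $\overline{C_{2k+1}}$; for any maximum clique $S$ of $\overline{C_{2k+1}}$ (these have exactly $k$ vertices) we get $0\le\left\|\sum_{i\in S}v_i\right\|^2=\sum_{i\in S}\|v_i\|^2+\sum_{i\neq j\in S}v_i^Tv_j\le k(k-1)-k(k-1)=0$, which forces $\|v_i\|^2=k-1$ for every $i$ (each vertex lies in some maximum clique) and $\sum_{i\in S}v_i=0$ for every maximum clique $S$. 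Finally, for consecutive vertices $i,i+1$ of $C_{2k+1}$, deleting the four consecutive vertices $i-1,i,i+1,i+2$ leaves a path on $2k-3$ vertices, which has a stable set $T$ of size $k-1$; then $T\cup\{i\}$ and $T\cup\{i+1\}$ are maximum stable sets of $C_{2k+1}$ (both of size $k$), i.e.\ maximum cliques of $\overline{C_{2k+1}}$, so applying $\sum_{v\in S}v_v=0$ to each yields $v_i=v_{i+1}$. Going around the cycle forces all $v_i$ to equal one common nonzero vector, whence $v_i^Tv_j=k-1>-1$ on any edge of $\overline{C_{2k+1}}$ (which exists since $k\ge 2$), a contradiction.

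The crux is the odd antihole case: the odd hole case and the forward direction become short once the $\mathcal{SPN}$ reformulation is in place, whereas the antihole argument needs the two-stage step above --- maximum cliques first pin the $v_i$ into a regular-simplex configuration, and then single-vertex swaps between maximum stable sets of the underlying odd cycle collapse all the $v_i$ onto one another. The only other ingredient requiring care is the Gram-matrix averaging behind the $\mathcal{SPN}$ characterization, which is by now classical.
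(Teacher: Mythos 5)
Your proof is correct, but it takes a genuinely different route from the paper's, and the difference matters for the paper's stated agenda. Both arguments pass through the same matrix reformulation ($p_G$ sos $\iff \omega(G)(I+\overline{A})-J\in S_n^++N_n$, i.e.\ Parrilo's identity $\mathcal{K}_n=S_n^++N_n$). For the forward direction, the paper simply chains $\omega(H)\le\vartheta'(\overline{H})\le\chi(H)$ using the De Klerk--Pasechnik identification of $\vartheta'$ with the sos relaxation, whereas you produce an explicit certificate $Q_H=P+N$ from an optimal colouring; your version is more self-contained and effectively re-proves $\vartheta'(\overline{H})\le\chi(H)$, which is a nice bonus. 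The real divergence is in the converse. You invoke the strong perfect graph theorem to reduce to odd holes and odd antiholes and then rule out an $S_n^++N_n$ decomposition by a direct Gram-vector analysis (the two-stage antihole argument --- forcing a simplex configuration via maximum cliques and then collapsing it via single-vertex swaps between maximum stable sets --- is correct and rather elegant). The paper instead proves that \emph{every minimal imperfect graph} $H_\star$ satisfies $\omega(H_\star)<\vartheta'(\overline{H_\star})$, using only Padberg's elementary linear-algebraic facts about minimal imperfect graphs (exactly $n$ maximum cliques, each vertex in exactly $\omega$ of them, non-singular max-clique matrix $C$) to build a feasible $X=\tfrac{1}{n(\omega-\lambda_1)}(C^TC-\lambda_1 I)$ for the $\vartheta'$ program with objective value exceeding $\omega$.

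The paper is explicit (Remark~\ref{rem:no_spgt}) that avoiding the strong perfect graph theorem is deliberate: one of its motivations is that Theorem~\ref{thm:main_thm} might eventually yield an \emph{algebraic proof} of that theorem, which would be circular if the characterization itself depended on it. Your argument is logically valid --- the strong perfect graph theorem is a theorem (note it is~\cite{SPGT}, not~\cite{CRST}) --- but it rests on a roughly 150-page structural proof where the paper uses a page of linear algebra, and it forfeits the independence that the paper considers a feature. If you replace your case analysis of odd holes and antiholes by the minimal-imperfect-graph lemma, you recover a proof of the same strength as the paper's while keeping your more explicit forward direction.
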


The remainder of this paper is organized as follows. In Section~\ref{sec:preliminaries}, we recall some definitions and results related to perfect graphs and sos polynomials. In Section~\ref{sec:sos_perfect}, we prove Theorem \ref{thm:main_thm} (without using the strong perfect graph theorem; see Section~\ref{sec:preliminaries} for the statement of this theorem and also Remark~\ref{rem:no_spgt}). Our proof brings together a number of interesting existing results in graph theory and conic optimization. In Section~\ref{sec:nonnegative_but_not_sos}, we focus on the connection between imperfect graphs and nonnegative polynomials that are not sos. In Sections~\ref{subsec:graphs} and~\ref{subsec:graph_operations}, we provide several infinite families of nonnegative polynomials that are not sos through various graph-theoretic constructions. In Section~\ref{subsec:random_graphs}, by building on previous results on the probable values of certain parameters associated with Erd\H{o}s-R\'enyi random graphs $G_{n,p}$, we show that for a fixed parameter $p$ and for large enough $n$, the polynomial $p_{G_{n,p}}(x)$ is nonnegative but not sos with high probability. In Section~\ref{subsec:separating_hyper}, we provide an explicit hyperplane that separates a given non-sos polynomial $p_G(x)$ from the set of sos polynomials. In Section~\ref{subsec:convexity}, we show that an example of a convex nonnegative polynomial that is not sos cannot arise from our graph-theoretic constructions. The construction of such a polynomial was an open problem until recently \cite{Saunderson}. In Section~\ref{sec:smaller_subsets_of_Cn}, we examine certain subsets of sos polynomials which admit a linear or second-order cone representation~\cite{AhmMaj}, or a more restricted semidefinite representation. We study the bounds that optimization over these subsets produces on the clique number of a graph, and characterize the graphs for which these bounds are tight for all induced subgraphs. Finally, in Section~\ref{sec:revisit_perfect}, we reformulate a number of results from the theory of perfect graphs as statements about sum of squares proofs of nonnegativity of certain polynomials. Our hope is that our reformulations will lead to more connections between structural graph theory and real algebraic geometry, and ideally to an algebraic proof of the strong perfect graph theorem. As a step in this direction, we use one of our corollaries together with results from linear algebra to give a short proof of the weaker statement that graphs with no odd cycles of length 5 or more are perfect.

\section{Preliminaries} \label{sec:preliminaries}

A (multivariate) \emph{polynomial} $p(x)$ in variables $x \mathrel{\mathop:}= (x_1, \dots, x_n)^T$ is a function from $\mathbb{R}^n$ to $\mathbb{R}$ that is a finite linear combination of monomials:
$$p(x) = \sum\limits_{\alpha} c_{\alpha}x^{\alpha} = \sum\limits_{\alpha_1, \dots, \alpha_n} c_{\alpha_1, \dots, \alpha_n} x_1^{\alpha_1} \dots x_n^{\alpha_n},$$
where the sum is over $n$-tuples of nonnegative integers $\alpha_i$. The \emph{degree} of a monomial $x^{\alpha}$ is equal to $\alpha_1 + \dots + \alpha_n$. The degree of a polynomial $p(x)$ is defined to be the highest degree of its monomials. A \emph{form} (or a homogeneous polynomial) is a polynomial where all the monomials have the same degree.

We denote the set of real symmetric $n \times n$ matrices by $S_n$. A matrix $M \in S_n$ is \emph{positive semidefinite} (psd) if $x^T Mx \geq 0$ for all $x \in \R^n$. A matrix $M \in S_n$ is \emph{nonnegative} if the entries of $M$ are all nonnegative. We write $M \succeq 0$ if $M$ is psd, and $M \geq 0$ if $M$ is nonnegative. We denote the set of $n \times n$ psd (resp. nonnegative) matrices by $S_n^+$ (resp. $N_n$). The trace of $M$ is denoted by $\text{Tr}(M)$.

All graphs in this paper are undirected, finite, and simple (i.e., have no loops or parallel edges). Throughout the paper, $G = (V,E)$ denotes a graph with vertex set $V(G)$ and edge set $E(G)$. The complement of a graph $G$, denoted by $\overline{G}$, is the graph with vertex set $V(G)$ and edge set consisting of all distinct pairs of vertices that are not adjacent in $G$. The matrices $A \mathrel{\mathop:}= A_{G}$ and $\overline{A} \mathrel{\mathop:}= A_{\overline{G}}$ respectively denote the adjacency matrices of $G$ and $\overline{G}$. We drop the subscript when the graph in consideration is clear from the context.
The matrices $I$ and $J$ respectively denote the identity matrix and the all-ones matrix. Observe that $A + \overline{A} + I = J$.

A graph $H$ is an \emph{induced subgraph} of a graph $G$ if $V(H) \subseteq V(G)$ and any two vertices of $H$ are adjacent if and only if they are adjacent in $G$. We say that $G$ \emph{contains} a graph $H$ if $G$ has an induced subgraph isomorphic to $H$, and that $G$ is \emph{$H$-free} if it does not contain $H$. For an integer $k \geq 4$, a {\em hole} (of length $k$) is a graph isomorphic to the chordless $k$-vertex cycle $C_k$, and an {\em antihole} (of length $k$) is a graph isomorphic to $\overline{C}_k$. A hole (or an antihole) is {\em odd} if its length is odd.

A \emph{clique} in a graph is a set of pairwise adjacent vertices, and an \emph{independent set} is a set of pairwise non-adjacent vertices. The clique number of a graph $G$, denoted by $\omega(G)$, is the size of a maximum clique in $G$, and the independence number of $G$, denoted by $\alpha(G)$, is the size of a maximum independent set in $G$. The chromatic number of a graph $G$, denoted by $\chi(G)$, is the smallest integer $\ell \geq 1$ such that $V(G)$ can be partitioned into $\ell$ independent sets.
Every graph $G$ clearly satisfies $\chi(G) \geq \omega(G)$. The inequality, however, may be strict. For instance, if $G$ is an odd hole, it is easy to see that $\omega(G) = 2$ and $\chi(G) = 3$. Similarly, if $G$ is an odd antihole of length $2k+1$ for some $k \geq 2$, then $\omega(G) = k$ and $\chi(G) = k+1$.

\subsection{Perfect graphs} \label{subsec:perfect_graphs}
A graph $G$ is \emph{perfect} if every induced subgraph $H$ of $G$ satisfies $\chi(H) = \omega(H)$. Berge introduced perfect graphs and made two conjectures~\cite{Berge61}. The first, proved by Lov\'asz~\cite{Lovasz2} and now known as the weak perfect graph theorem, states that a graph is perfect if and only if its complement is perfect. Berge's second conjecture characterizes minimal imperfect graphs. A graph $G$ is \emph{minimal imperfect} if $G$ is not perfect but every proper induced subgraph of $G$ is perfect. Berge observed that odd holes and odd antiholes are minimal imperfect graphs, and conjectured that they are, in fact, \emph{the only} minimal imperfect graphs. This conjecture, now known as the strong perfect graph theorem, was proved by Chudnovsky, Robertson, Seymour, and Thomas~\cite{SPGT}: A graph is perfect if and only if it does not contain an odd hole or an odd antihole. Indeed, since odd holes and odd antiholes satisfy $\chi(G) = \omega(G) + 1$, perfect graphs do not contain odd holes or odd antiholes. The proof of the converse direction is long and relies on structural graph theory.

The \emph{theta number} of a graph $G$, introduced by Lov\'asz \cite{Lovasz} and denoted by $\vartheta(G)$, is given as the optimal value of the following semidefinite program:
\begin{equation}
\begin{aligned}
\hspace{4cm} 
\vartheta(G) \:\: \mathrel{\mathop:}= \:\:\:\:\:\:
& \max\limits_{X \in S_n}
& & \text{Tr}(JX) \\
& \text{s.t.}
&& X_{ij} = 0 \:\:\:\: \mathrm{if} \:\: ij \in E \\
&&&  \text{Tr}(X) = 1 \\
&&& X \succeq 0.
\end{aligned}
\label{eqn:lov_number}
\end{equation}

Gr\"{o}tschel, Lov\'asz, and Schrijver \cite{GLS} showed that the theta number of the complement of a graph is sandwiched between the clique number and the chromatic number of the graph, that is, for any graph $G$, we have $\omega(G) \leq \vartheta(\overline{G}) \leq \chi(G)$. Since $\vartheta(\overline{G})$ can be computed with arbitrary precision in polynomial time via the semidefinite program \eqref{eqn:lov_number}, one of the consequences of this result is that for a perfect graph $G$, the clique number of $G$ can be computed in polynomial time.

A strengthening of the theta number was introduced by McEliece et al.~\cite{MRR} and Schrijver~\cite{Schrijver}, where an entry-wise nonnegativity constraint on the matrix $X$ is added to \eqref{eqn:lov_number}:
\begin{equation}
\begin{aligned}
\hspace{4cm} \vartheta'(G) \:\: \mathrel{\mathop:}= \:\:\:\:\:\:
& \max\limits_{X \in S_n}
& & \text{Tr}(JX) \\
& \text{s.t.}
&& X_{ij} = 0 \:\:\:\: \mathrm{if} \:\: ij \in E \\
&&&  \text{Tr}(X) = 1 \\
&&& X \succeq 0 \\
&&& X \geq 0.
\end{aligned}
\label{eqn:sch_number}
\end{equation}
Schrijver \cite{Schrijver} observed that $\vartheta'(\overline{G})$, too, is an upper bound on $\omega(G)$, that is, for any graph $G$, we have 
\begin{equation}
\begin{aligned}
\hspace{5cm}
\omega(G) \leq \vartheta'(\overline{G}) \leq \vartheta(\overline{G}) \leq \chi(G).
\end{aligned}
\label{eqn:sandwich}
\end{equation}

\subsection{Sum of squares polynomials}\label{subsec:sos}

A polynomial $p: \R^n \rightarrow \R$ with real coefficients is \emph{nonnegative} if $p(x) \geq 0$ for all $x \in \R^n$ and a \emph{sum of squares} (sos) if there exist polynomials $q_1(x),\ldots,q_m(x)$ such that $p(x) = \sum_{i=1}^m q_i^2(x)$. While every sos polynomial is clearly nonnegative, Hilbert showed in 1888 that there exist nonnegative polynomials that are not sos \cite{Hilbert}. His proof was not constructive and did not lead to an explicit example of a nonnegative polynomial that is not sos. The first examples of such polynomials were found by Motzkin~\cite{Motzkin} and Robinson~\cite{Robinson} nearly eighty years after Hilbert's proof.

From a complexity standpoint, testing nonnegativity of polynomials of any fixed degree $2d \geq 4$ is NP-hard~\cite{MurtKaba}. By contrast, checking whether a polynomial is sos can be done by solving a semidefinite program. Indeed, a polynomial $p(x)$ in $n$ variables and of degree $2d$ is sos if and only if there exists a psd matrix $Q$ such that $p(x) = z(x)^TQz(x)$, where $z(x)$ is the vector of monomials of degree up to $d$, i.e., $z(x) = (1, x_1, \dots, x_n, x_1x_2, \dots, x_n^d)^T$ (see, e.g.,~\cite{ChoiLamRez,ParriloThesis}). In fact, this statement leads to a semidefinite programming-based approach for optimizing a linear function over the intersection of the set of sos polynomials with an affine subspace. This observation has enabled wide-ranging applications, see, e.g., \cite{ghsurvey}.

\section{A Sum of Squares Characterization of Perfect Graphs}\label{sec:sos_perfect}

In this section, we prove Theorem~\ref{thm:main_thm} by establishing a few intermediary lemmas. We begin by stating some relevant results from prior literature.

A matrix $M \in S_n$ is \emph{copositive} if $x^TMx \geq 0$ for all $x \geq 0$ (i.e., for all vectors $x$ in the nonnegative orthant). We denote the set of $n \times n$ copositive matrices by $\mathcal{C}_n$. It is not difficult to observe that minimization of a quadratic function over the standard simplex $\Delta \mathrel{\mathop:}= \{x \in \mathbb{R}^n : \sum_{i=1}^n x_i = 1, x \geq 0\}$ is equivalent to optimization of a linear function over $\mathcal{C}_n$ (see, e.g.,~\cite{DKLP, BDKRQT}):
\begin{equation}
\hspace{4.5cm}
\begin{aligned}
\min\limits_{x \in \Delta} \:\: x^TQx \:\:\:\: = \:\:\:\:
& \max\limits_{k \in \R}
& & k \\
& \text{s.t.}
&& Q - kJ \in \mathcal{C}_n.
\end{aligned}
\label{eqn:quad_copos}
\end{equation}
As shown by Motzkin and Straus \cite{MS}, the problem on the left can be related to the clique number of a graph as follows:
\begin{equation}
\hspace{5cm}
\begin{aligned}
\frac{1}{\omega(G)} \:\: = \:\:\:\:
\min\limits_{x \in \Delta} \:\:\: x^T (I + \overline{A}) x.
\end{aligned}
\label{eqn:MS}
\end{equation}
Here, $\overline{A}$ denotes the adjacency matrix of the complement graph $\overline{G}$. It follows from \eqref{eqn:quad_copos} and \eqref{eqn:MS} that
\begin{equation}
\hspace{4.6cm}
\begin{aligned}
\omega(G) \:\: = \:\:\:\:\:
& \min\limits_{k \in \R}
& & k \\
& \text{s.t.}
&& k(I + \overline{A}) - J \in \mathcal{C}_n.
\end{aligned}
\label{eqn:omega_copos}
\end{equation}
It is easy to see that a matrix $M$ belongs to $\mathcal{C}_n$ if and only if the quartic form 
\begin{equation}
\hspace{5.6cm}
\begin{aligned}
p_M(x) \mathrel{\mathop:}= \sum\limits_{i,j=1}^n M_{ij}x_i^2x_j^2
\end{aligned}
\label{eqn:p_M_x}
\end{equation}
is nonnegative. Let $\mathcal{K}_n$ denote the set of matrices $M \in S_n$ such that $p_M(x)$ is sos. Clearly, $\mathcal{K}_n \subseteq \mathcal{C}_n$. Hence, a tractable upper bound on $\omega(G)$ can be obtained by replacing $\mathcal{C}_n$ in~\eqref{eqn:omega_copos} with $\mathcal{K}_n$. Using in part a result from~\cite[Section 5]{ParriloThesis} (see also \cite[Lemma 3.5]{ChoiLam1}), 
De Klerk and Pasechnik \cite{DKLP} showed that the resulting upper bound coincides with the parameter $\vartheta'(\overline{G})$ defined in \eqref{eqn:sch_number}:
\begin{equation}
\hspace{4.6cm}
\begin{aligned}
\vartheta'(\overline{G}) \:\: = \:\:\:\:
& \min\limits_{k \in \R}
& & k \\
& \text{s.t.}
&& k(I + \overline{A}) - J \in \mathcal{K}_n.
\end{aligned}
\label{eqn:theta_sos}
\end{equation}

The following lemma sheds light on the construction of the polynomial $p_G(x)$ in \eqref{eqn:our_polynomial}, and a corollary of it will be used in the proof of Theorem~\ref{thm:main_thm}. Let us consider a more general family of quartic forms by replacing the constant $\omega(G)$ in $p_G(x)$ with an arbitrary scalar $k \in \R$:
\begin{equation}
\hspace{3.2cm}
p_{G,k}(x) \mathrel{\mathop:}= -2 k \mathlarger{\sum}\limits_{ij \in E(G)} x_i^2x_j^2 + (k - 1) \left( \mathlarger{\sum}\limits_{i=1}^{|V(G)|} x_i^2 \right)^2.
\label{eqn:pgkx}
\end{equation}

\begin{lemma}
\label{lem:nonneg_if_clique}
For any graph $G$, 
\begin{enumerate}[(a)]
\itemsep0em
\item\label{part_a} the polynomial $p_{G,k}(x)$ is nonnegative if and only if $k \geq \omega(G)$.
\item\label{part_b} the polynomial $p_{G,k}(x)$ is sos if and only if $k \geq \vartheta'(\overline{G})$.
\end{enumerate}
\end{lemma}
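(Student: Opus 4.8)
The plan is to show that $p_{G,k}(x)$ is \emph{literally} the quartic form $p_M(x)$ from \eqref{eqn:p_M_x} associated with the matrix $M \mathrel{\mathop:}= k(I + \overline{A}) - J$, and then read off both statements from the already-established identities \eqref{eqn:omega_copos} and \eqref{eqn:theta_sos}. So the first step is the algebraic identity $p_{G,k}(x) = p_M(x)$. Expanding $\left(\sum_{i} x_i^2\right)^2 = \sum_i x_i^4 + 2\sum_{i<j} x_i^2 x_j^2$ and splitting the pairs $\{i,j\}$ into edges and non-edges of $G$, one finds that in $p_{G,k}$ the coefficient of $x_i^4$ is $k-1$, the coefficient of $x_i^2 x_j^2$ for $ij \in E(G)$ is $-2k + 2(k-1) = -2$, and the coefficient of $x_i^2 x_j^2$ for a non-edge $\{i,j\}$ is $2(k-1)$. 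Using $A + \overline{A} + I = J$, these are exactly $M_{ii}$, $2M_{ij}$ on edges of $G$, and $2M_{ij}$ on non-edges of $G$, so $p_{G,k}(x) = \sum_{i,j} M_{ij} x_i^2 x_j^2 = p_M(x)$.

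Given the identity, part \eqref{part_a} follows: as observed in the excerpt, $M \in \mathcal{C}_n$ if and only if $p_M$ is nonnegative, and by \eqref{eqn:omega_copos} the set $\{k : k(I+\overline{A}) - J \in \mathcal{C}_n\}$ has minimum element $\omega(G)$. To conclude that this set is exactly $[\omega(G),\infty)$, I would note that it is upward closed, since $k'(I+\overline{A}) - J = \big(k(I+\overline{A}) - J\big) + (k'-k)(I+\overline{A})$, the matrix $I+\overline{A}$ is entrywise nonnegative (hence copositive), and $\mathcal{C}_n$ is a convex cone; together with the attained minimum $\omega(G)$ this gives the claim. Part \eqref{part_b} is the same argument with $\mathcal{C}_n$ replaced by $\mathcal{K}_n$ and \eqref{eqn:omega_copos} replaced by \eqref{eqn:theta_sos}: here $p_M$ sos means $M \in \mathcal{K}_n$ by definition of $\mathcal{K}_n$, the matrix $I+\overline{A}$ lies in $\mathcal{K}_n$ because $p_{I+\overline{A}}(x) = \sum_i (x_i^2)^2 + \sum_{ij \notin E(G)} 2(x_i x_j)^2$ is manifestly a sum of squares, and $\mathcal{K}_n$ is again a convex cone, so $\{k : k(I+\overline{A}) - J \in \mathcal{K}_n\}$ is an upward-closed set whose minimum element is $\vartheta'(\overline{G})$ by \eqref{eqn:theta_sos}; hence it equals $[\vartheta'(\overline{G}),\infty)$.

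I do not expect a genuine obstacle here: the content of the lemma is essentially the recognition that $p_{G,k} = p_M$, after which both parts are immediate consequences of \eqref{eqn:omega_copos} and \eqref{eqn:theta_sos}. The one spot that needs care is the coefficient bookkeeping in the first step, and the one point worth a sentence of justification is that the minima in \eqref{eqn:omega_copos} and \eqref{eqn:theta_sos} are actually attained — which follows from closedness of $\mathcal{C}_n$ (an intersection of closed half-spaces) and of $\mathcal{K}_n$ (the preimage of the closed cone of sos quartic forms under the linear map $M \mapsto p_M$) — so that the upward-closed feasible rays are closed on the left.
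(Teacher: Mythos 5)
Your proposal is correct and follows essentially the same route as the paper: both identify $p_{G,k}$ with the quartic form of the matrix $k(I+\overline{A})-J$, invoke \eqref{eqn:omega_copos} and \eqref{eqn:theta_sos} for the lower bounds on $k$, and establish upward closedness by adding the entrywise nonnegative (hence copositive, resp.\ in $\mathcal{K}_n$) matrix $(k'-k)(I+\overline{A})$. Your extra remarks---spelling out why $I+\overline{A}\in\mathcal{K}_n$ and why the minima in \eqref{eqn:omega_copos} and \eqref{eqn:theta_sos} are attained---are correct refinements of points the paper leaves implicit.
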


\begin{proof}
Let $G=(V,E)$ be a graph with $|V(G)| = n$ and with adjacency matrix $A$. We first claim that the polynomial $p_{G,k}(x)$ is nonnegative if and only if $k(I + \overline{A}) - J \in \mathcal{C}_n$. Observe that since $I + \overline{A} = J - A$, we have $k(I + \overline{A}) - J = - k A + (k -1)J$. Therefore,
\begin{equation*}
\begin{aligned}
\hspace{1cm}
k(I + \overline{A}) - J \in \mathcal{C}_n & \iff \sum\limits_{i,j=1}^n \Big [- k A + (k -1)J \Big ]_{ij}x_i^2x_j^2 \geq 0 \quad \forall x \in \R^n\\
& \iff -k \sum\limits_{i,j=1}^n A_{ij} x_i^2x_j^2 + (k -1) \sum\limits_{i,j=1}^n J_{ij} x_i^2x_j^2 \geq 0 \quad \forall x \in \R^n\\
& \iff -2k \mathlarger{\sum}\limits_{ij \in E(G)} x_i^2x_j^2 + (k - 1) \left( \mathlarger{\sum}\limits_{i=1}^{n} x_i^2 \right)^2 \geq 0 \quad \forall x \in \R^n\\
& \iff p_{G,k}(x) \geq 0 \quad \forall x \in \R^n.
\end{aligned}
\end{equation*}
Similarly, $k(I + \overline{A}) - J \in \mathcal{K}_n$ if and only if $p_{G,k}(x)$ is sos. Hence, by \eqref{eqn:omega_copos} and \eqref{eqn:theta_sos}, we obtain the following formulations of $\omega(G)$ and $\vartheta'(\overline{G})$:
\vspace{-0.6cm}
\begin{multicols}{2}
\begin{equation*}
\hspace{0.8cm}
\begin{aligned}
\omega(G) \:\: = \:\:\:\:
& \min\limits_{k \in \R}
& & k \\
& \text{s.t.}
&& p_{G,k}(x) \text{ is nonnegative,}
\end{aligned}
\end{equation*}
\hspace{0.5cm}
\begin{equation*}
\begin{aligned}
\vartheta'(\overline{G}) \:\: = \:\:\:\:
& \min\limits_{k \in \R}
& & k \\
& \text{s.t.}
&& p_{G,k}(x) \text{ is sos.}
\end{aligned}
\end{equation*}
\end{multicols}

Therefore, if $p_{G,k}(x)$ is nonnegative, then $k \geq \omega(G)$. Similarly, if $p_{G,k}(x)$ is sos, then $k \geq \vartheta'(\overline{G})$. Observe also that if $p_{G,k}(x)$ is nonnegative (resp. sos) for some $k \in \R$, then $p_{G,k'}(x)$ is nonnegative (resp. sos) for every $k' \in \R$ with $k' \geq k$. This is simply because
$$\big( k'(I + \overline{A}) - J \big) - \big( k(I + \overline{A}) - J \big) = (k' - k) (I + \overline{A})$$
is a nonnegative matrix, thus belongs to $\mathcal{C}_n$ (resp. $\mathcal{K}_n$). It follows that $p_{G,k}(x)$ is nonnegative if and only if $k \geq \omega(G)$, and that $p_{G,k}(x)$ is sos if and only if $k \geq \vartheta'(\overline{G})$.
\end{proof}

\begin{corollary}
\label{cor:pgx_nonnegative_sos}
For any graph $G$, 
\begin{enumerate}[(a)]
\itemsep0em
\item\label{part_aa} the polynomial $p_G(x)$ is nonnegative.
\item\label{part_bb} the polynomial $p_G(x)$ is sos if and only if $\omega(G) = \vartheta'(\overline{G})$.
\end{enumerate}
\end{corollary}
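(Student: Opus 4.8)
The plan is to read off the corollary directly from Lemma~\ref{lem:nonneg_if_clique}, observing that the polynomial $p_G(x)$ of \eqref{eqn:our_polynomial} is precisely $p_{G,k}(x)$ of \eqref{eqn:pgkx} specialized to $k = \omega(G)$. So both parts reduce to evaluating the two ``if and only if'' characterizations of the lemma at this particular value of $k$.

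For part~\eqref{part_aa}, I would apply Lemma~\ref{lem:nonneg_if_clique}\eqref{part_a} with $k = \omega(G)$. The condition $k \geq \omega(G)$ then holds trivially (with equality), so $p_G(x) = p_{G,\omega(G)}(x)$ is nonnegative. For part~\eqref{part_bb}, I would apply Lemma~\ref{lem:nonneg_if_clique}\eqref{part_b}, again with $k = \omega(G)$: this says that $p_G(x)$ is sos if and only if $\omega(G) \geq \vartheta'(\overline{G})$. To upgrade this one-sided inequality to the claimed equality, I would invoke the sandwich inequality \eqref{eqn:sandwich} of Gr\"otschel--Lov\'asz--Schrijver and Schrijver, namely $\omega(G) \leq \vartheta'(\overline{G})$, which holds for every graph. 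Combining the two inequalities yields that $p_G(x)$ is sos if and only if $\omega(G) = \vartheta'(\overline{G})$.

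There is no genuine obstacle in this argument: it is a routine specialization of the preceding lemma. The only ingredient not already packaged inside Lemma~\ref{lem:nonneg_if_clique} is the left-hand inequality of \eqref{eqn:sandwich}, which is recalled in Section~\ref{subsec:perfect_graphs}; it is what forces the inequality $\omega(G) \geq \vartheta'(\overline{G})$ obtained from the lemma to be, in fact, an equality whenever $p_G(x)$ is sos.
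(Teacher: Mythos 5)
Your proposal is correct and is essentially identical to the paper's own proof: both parts are obtained by specializing Lemma~\ref{lem:nonneg_if_clique} to $k=\omega(G)$, with the left-hand inequality of \eqref{eqn:sandwich} converting the condition $\omega(G)\geq\vartheta'(\overline{G})$ into the stated equality in part~\eqref{part_bb}. The paper merely states this more tersely; no difference in substance.
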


\begin{proof}
Part \eqref{part_aa} follows from Lemma \ref{lem:nonneg_if_clique}\eqref{part_a} since $p_G(x) = p_{G, \omega(G)} (x)$, and part \eqref{part_bb} follows from Lemma~\ref{lem:nonneg_if_clique}\eqref{part_b} and \eqref{eqn:sandwich}.
\end{proof}

We refer the reader to the recent work of Laurent and Vargas \cite{LV2}, where the equality between $\omega(G)$ and $\vartheta'(\overline{G})$ is studied for graphs with $\omega(G) \leq 2$ and for the complements of the so-called ``$\alpha$-critical'' graphs. (See also \cite{LV1, LV3} for a related study of a hierarchy of semidefinite programming-based inner approximations for $\mathcal{C}_n$, whose first level corresponds to $\mathcal{K}_n$.)

\begin{remark}
In the statement of Corollary \ref{cor:pgx_nonnegative_sos}\eqref{part_bb}, one cannot replace the quantity $\vartheta'(\overline{G})$ with the theta number $\vartheta(\overline{G})$. For example, let $G$ be the graph on 64 vertices corresponding to the vectors in $\{0,1\}^6$, with two vertices adjacent if and only if the Hamming distance between the corresponding vectors is at least 4. We have $\omega(G) = \vartheta'(\overline{G}) = 4$ and $\vartheta(\overline{G}) = 16/3$; see~\cite{Schrijver}. Therefore, $p_G(x)$ is sos by Corollary~\ref{cor:pgx_nonnegative_sos}\eqref{part_bb}, but $\omega(G) \neq \vartheta(\overline{G})$.
\end{remark}

\begin{remark}
\label{rem:when_sos}
Corollary~\ref{cor:pgx_nonnegative_sos}\eqref{part_bb} characterizes the graphs $G$ for which the polynomial $p_G(x)$ is sos. We note that the condition $\omega(G) = \vartheta'(\overline{G})$ is not enough for a graph $G$ to be perfect. In fact, even the condition $\omega(G) = \chi(G)$ is not enough for $G$ to be perfect since perfectness requires this condition to hold for every induced subgraph. As an example, let $G$ be the graph with vertex set $\{v_1, v_2, v_3, v_4, v_5, h\}$ and edge set $\{v_1v_2, v_2v_3, v_3v_4, v_4v_5, v_5v_1, hv_1, hv_2\}$, i.e., $G$ is the graph $C_5$ with an additional vertex $h$ adjacent only to $v_1$ and $v_2$. Then, it is easy to verify that $\omega(G) = \chi(G) = 3$. However, $G$ is not perfect as it contains the graph $C_5$, for which we have $2 = \omega(C_5) < \chi(C_5) = 3$. Observe also that this graph $G$ is an example of an imperfect graph for which $p_G(x)$ is sos. This observation justifies our definition of sos-perfectness which takes induced subgraphs into consideration. Indeed, if $G$ is a perfect graph, then, by definition, every induced subgraph of $G$ is perfect. However, being sos is not a ``hereditary'' property in the sense that it is possible for the polynomial $p_G(x)$ to be sos and for $G$ to have an induced subgraph $H$ with $p_H(x)$ not sos. The graph above with $H = C_5$ provides one such example.
\end{remark}

Although the condition ``$\omega(G) = \vartheta'(\overline{G})$'' is not sufficient for a graph $G$ to be perfect, we prove next that the condition ``$\omega(H) = \vartheta'(\overline{H})$ for every induced subgraph $H$ of $G$'' is. We follow a proof technique of Lov\'asz~\cite{Lovasz0} which makes use of a binary matrix associated with the maximum cliques of a graph. Let $G$ be a graph with $n$ vertices and $m$ maximum cliques. Then, the \emph{max-clique matrix} $C$ of $G$ is an $m \times n$ matrix with $C_{ij} = 1$ if the $i^{th}$ maximum clique contains the $j^{th}$ vertex, and $C_{ij} = 0$ otherwise. As an example, the max-clique matrix of the graph $C_5$ is given in Figure \ref{fig:C5}.

We prove our next lemma without using the strong perfect graph theorem (see Remark \ref{rem:no_spgt}).

\begin{figure}[h]
\hspace{2.8cm}
\begin{minipage}{0.2\textwidth}
\begin{tikzpicture}[scale=0.2]

\node[label=above:{$v_1$}, inner sep=2.5pt, fill=black, circle] at (0,4.5)(v1){}; 
\node[label=right:{$v_2$}, inner sep=2.5pt, fill=black, circle] at (3.804226, 1.2361)(v2){}; 
\node[label=below:{$v_3$}, inner sep=2.5pt, fill=black, circle] at (2.351141, -3.2361)(v3){}; 
\node[label=below:{$v_4$}, inner sep=2.5pt, fill=black, circle] at (-2.351141, -3.2361)(v4){}; 
\node[label=left:{$v_5$}, inner sep=2.5pt, fill=black, circle] at (-3.804226, 1.2361)(v5){};

\draw[black, thick] (v1) -- (v2);
\draw[black, thick] (v1) -- (v5);
\draw[black, thick] (v2) -- (v3);
\draw[black, thick] (v3) -- (v4);
\draw[black, thick] (v4) -- (v5);

\end{tikzpicture}
\end{minipage}
\begin{minipage}{0.2\textwidth}
\begin{center}
\[
\hspace{1.5cm} C=
\begin{blockarray}{cccccc}
v_1 & v_2 & v_3 & v_4 & v_5 \\
\begin{block}{[ccccc]c}
  1 & 1 & 0 & 0 & 0 & \text{clique } 1 \\
  0 & 1 & 1 & 0 & 0 & \text{clique } 2 \\
  0 & 0 & 1 & 1 & 0 & \text{clique } 3 \\
  0 & 0 & 0 & 1 & 1 & \text{clique } 4 \\
  1 & 0 & 0 & 0 & 1 & \text{clique } 5 \\
\end{block}
\end{blockarray}
 \]
\end{center}
\end{minipage}
\vspace{-0.2cm}
\caption{The graph $C_5$ and its max-clique matrix $C$}
\label{fig:C5}
\end{figure}
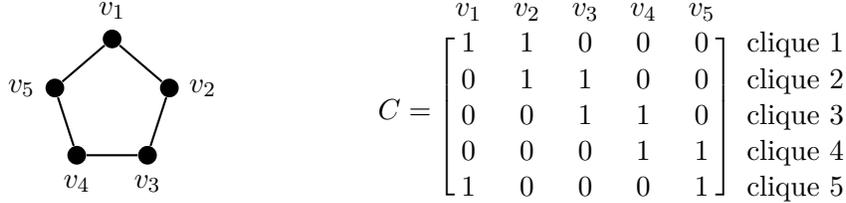

\begin{lemma}
If $G$ is a minimal imperfect graph, then $\omega(G) < \vartheta'(\overline{G})$.
\label{lem:minimal_imperfect}
\end{lemma}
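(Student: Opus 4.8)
The plan is to prove the contrapositive-flavored statement directly: assume $G$ is minimal imperfect and show $\omega(G) < \vartheta'(\overline{G})$, equivalently (by Corollary~\ref{cor:pgx_nonnegative_sos}\eqref{part_bb}) that $p_G(x)$ is \emph{not} sos. I would follow Lov\'asz's approach via the max-clique matrix $C$ of $G$. Recall the key structural facts about minimal imperfect graphs $G$ on $n$ vertices with $\omega := \omega(G)$ and $\alpha := \alpha(G)$: every vertex lies in exactly $\alpha$ maximum cliques, so $G$ has exactly $m = n$ maximum cliques (one can see $m \le n$ from an eigenvalue/counting argument and $m \ge n$ from the complementary statement about $\overline{G}$); moreover, each maximum clique meets each maximum independent set in at most one vertex, and in fact there is a partition-type structure so that $C C^T = (\omega - 1) I + N$ for a suitable $0/1$ matrix $N$ recording clique intersections, with $C^T C$ also nicely structured. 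The crucial classical fact I would invoke is that $C$ is a \emph{square nonsingular} $n \times n$ matrix (this is exactly where "minimal imperfect" is used, via Lov\'asz's counting), and that the maximum cliques can be indexed so that $C_{ii} = 0$ for all $i$ (each vertex $v_i$ is missed by exactly one maximum clique, and we label that clique $i$); see the $C_5$ example in Figure~\ref{fig:C5}.

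Given this, I would exhibit a point $\hat{x}$ at which the sos certificate must fail, or more precisely, construct a feasible solution to the dual of the semidefinite program \eqref{eqn:theta_sos} with value strictly larger than $\omega(G)$. Concretely, using \eqref{eqn:sch_number}, I want a matrix $X \succeq 0$, $X \ge 0$, with $X_{ij} = 0$ whenever $ij \in E(\overline{G})$ (i.e., $ij \notin E(G)$ and $i \ne j$), $\mathrm{Tr}(X) = 1$, and $\mathrm{Tr}(JX) > \omega(G)$. The natural candidate is $X = \frac{1}{\|C\mathbf{1}\|^2}\, C^T u u^T C$ for an appropriate nonnegative vector $u$, or more simply $X = \frac{1}{c}\, C^T C$ suitably normalized: since $(C^TC)_{ij}$ counts the number of maximum cliques containing both $v_i$ and $v_j$, it is supported exactly on pairs that are adjacent in $G$ together with the diagonal, so the constraint $X_{ij}=0$ for non-edges of $G$ holds automatically; $C^TC \succeq 0$ and $C^TC \ge 0$ are immediate; and $\mathrm{Tr}(JX)$ and $\mathrm{Tr}(X)$ can be computed from $\mathbf{1}^T C^T C\, \mathbf{1} = \|C\mathbf{1}\|^2$ and $\mathrm{Tr}(C^TC) = \|C\|_F^2 = (\text{number of ones in }C) = \alpha n$ (each vertex in $\alpha$ maximum cliques), giving $\vartheta'(\overline{G}) \ge \|C\mathbf{1}\|^2 / (\alpha n)$. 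Then I would show this bound strictly exceeds $\omega$ using $\|C\mathbf{1}\|^2 = \sum_i (\text{clique size})^2 = m\omega^2 = n\omega^2$ combined with the inequality $n\omega^2/(\alpha n) = \omega^2/\alpha > \omega$, which holds precisely because $\alpha < \omega$ is false in general — so instead I would use the sharper identity relating $\alpha, \omega, n$ for minimal imperfect graphs (Lov\'asz: $n = \alpha\omega + 1$), which yields $\omega^2/\alpha$ need not be the right bound and forces me to use a non-uniform $u$.

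The honest version of the last step therefore uses a weighting vector. By Lov\'asz's theorem, a minimal imperfect graph satisfies $n = \alpha\omega + 1$, and there is a fractional vertex weighting $w \ge 0$ with $\mathbf{1}^T w = 1$ such that $w^T(I+\overline{A})w = 1/n \cdot(\text{something})$ — more cleanly, I would take the uniform point $\hat{x} = \frac{1}{\sqrt n}\mathbf{1}$ and directly evaluate $p_G(\hat x)$ versus what an sos decomposition would allow, but the cleanest route is: the SDP \eqref{eqn:theta_sos} for $\vartheta'(\overline{G})$ has, by LP/SDP duality against \eqref{eqn:sch_number}, optimal value $\ge \mathrm{Tr}(J X^\star)$ for \emph{any} feasible $X^\star$, and the Lov\'asz choice $X^\star = \frac{1}{n\omega}\, C^T C$ is feasible with $\mathrm{Tr}(X^\star) = \frac{\alpha n}{n\omega} = \frac{\alpha}{\omega} \ne 1$ — so I must normalize by $\mathrm{Tr}$, obtaining $\vartheta'(\overline{G}) \ge \frac{\mathbf{1}^TC^TC\mathbf{1}}{\mathrm{Tr}(C^TC)} = \frac{n\omega^2}{\alpha n} = \frac{\omega^2}{\alpha}$, and finally $\frac{\omega^2}{\alpha} > \omega \iff \omega > \alpha$, which is \emph{not} always true. \textbf{This is the main obstacle}: plain $C^TC$ is too crude, and one genuinely needs Lov\'asz's finer structure — namely that for minimal imperfect $G$ the matrix $C$ is square and invertible, so one can instead plug in $X^\star$ built from $C^{-1}$ (or from the fact that $CC^T$ and $C^TC$ are both positive definite), producing a matrix whose Perron structure gives $\mathbf{1}^T C^{-T}C^{-1}\mathbf{1}$-type quantities that beat $\omega$ by a positive margin. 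So the real work, and where I expect to spend the effort, is: (i) citing/using the Lov\'asz facts that $m = n$, $C$ invertible, and $n = \alpha\omega+1$; (ii) choosing the correct feasible $X^\star$ for \eqref{eqn:sch_number} — I believe the right one is $X^\star = \frac{1}{n}\,(C^{-1})(C^{-1})^T$ scaled so the trace is $1$, whose off-diagonal support is controlled because $C^{-1}$ inherits a combinatorial sign/support pattern from the circulant-like structure of $C$; and (iii) checking $\mathrm{Tr}(JX^\star) > \omega$ strictly, the strictness coming from the "$+1$" in $n = \alpha\omega + 1$. Once $\vartheta'(\overline{G}) \ge \mathrm{Tr}(JX^\star) > \omega(G)$ is established, the lemma follows, and — as promised in the surrounding text — no appeal to the strong perfect graph theorem is needed, only Lov\'asz's combinatorial analysis of minimal imperfect graphs plus linear algebra.
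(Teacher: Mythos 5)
Your overall strategy is the paper's: invoke Lov\'asz/Padberg's facts about minimal imperfect graphs (exactly $n$ maximum cliques, $C$ square and nonsingular) and exhibit a feasible solution to the SDP \eqref{eqn:sch_number_complement} built from $C^TC$ with objective value strictly above $\omega$. But the execution has a factual error and then a genuine gap at the decisive step. First, the error: in a minimal imperfect graph every vertex lies in exactly $\omega$ maximum cliques, not $\alpha$ (and consequently each vertex is \emph{missed} by $n-\omega$ of them, so your claim that $C$ can be indexed with $C_{ii}=0$ is also off). With the correct count, $\mathrm{Tr}(C^TC)=n\omega$ and $\mathbf{1}^TC^TC\mathbf{1}=n\omega^2$, so the trace-normalized matrix $\frac{1}{n\omega}C^TC$ is feasible but gives $\mathrm{Tr}(JX)=\omega$ exactly --- not $\omega^2/\alpha$, and not anything strict. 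So you are right that plain $C^TC$ is too crude, but for a different reason than the one you give.

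The gap is in your proposed fix. Taking $X^\star$ proportional to $(C^{-1})(C^{-1})^T=(C^TC)^{-1}$ does not work: the inverse of a matrix supported on $E(G)\cup\{\text{diagonal}\}$ is generically dense and has no sign control, so $X^\star$ will violate both the support constraint $X_{ij}=0$ for $ij\notin E(G)$ and the entrywise nonnegativity constraint. The idea you are missing is much lighter: let $\lambda_1$ be the smallest eigenvalue of $C^TC$ and take $X=\frac{1}{n(\omega-\lambda_1)}\bigl(C^TC-\lambda_1 I\bigr)$. Nonsingularity of $C$ gives $\lambda_1>0$; since the diagonal of $C^TC$ is constantly $\omega$ and $G$ has an edge, $\lambda_1<\omega$ (otherwise $C^TC=\omega I$). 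Subtracting $\lambda_1 I$ preserves psd-ness, preserves the support and nonnegativity (only the diagonal changes, from $\omega$ to $\omega-\lambda_1>0$), and normalizes the trace to $1$, while the objective becomes $\frac{\omega^2-\lambda_1}{\omega-\lambda_1}>\omega$, the strict inequality holding precisely because $\lambda_1>0$ and $\omega>1$. This is where nonsingularity of $C$ --- the one place ``minimal imperfect'' is genuinely used --- enters; the identity $n=\alpha\omega+1$, which you lean on, is not needed at all.
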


\begin{proof}
We follow a proof of Lov\'asz's (see Lemma 7.9 in \cite[Section 3]{Lovasz0}). Let $G=(V,E)$ be a minimal imperfect graph with $|V(G)|=n$ and with clique number $\omega$. Let $C$ be the max-clique matrix of $G$. By a result of Padberg~\cite{Padberg} (see also~\cite{Gasparian} for a different proof), $G$ has $n$ maximum cliques, every vertex of $G$ is in exactly $\omega$ maximum cliques, and the matrix $C$ is non-singular.

Let $\lambda_1$ be the smallest eigenvalue of $C^TC$. Observe that the diagonal entries of $C^TC$ are all $\omega$. It is then not difficult to see that $\lambda_1 \in (0,\omega)$. Indeed, since $C^TC$ is psd and since $\text{Tr}(C^TC) = n \omega$, we have $\lambda_1 \in [0,\omega]$. Also, we have $\lambda_1 \neq 0$ since $C$ is non-singular, and $\lambda_1 \neq \omega$ as otherwise all eigenvalues of $C^TC$ would equal $\omega$, in which case $C^TC = \omega I$. This is a contradiction since a minimal imperfect graph has at least one edge.

Now recall that
\begin{equation}
\begin{aligned}
\hspace{4cm} \vartheta'(\overline{G}) \:\: = \:\:\:\:
& \max\limits_{X \in S_n}
& & \text{Tr}(JX) \\
& \text{s.t.}
&& X_{ij} = 0 \:\:\:\:\:\: \mathrm{if} \:\:\: ij \notin E(G) \\
&&&  \text{Tr}(X) = 1 \\
&&& X \succeq 0 \\
&&& X \geq 0.
\end{aligned}
\label{eqn:sch_number_complement}
\end{equation}
Consider the matrix $X = \frac{1}{n (\omega - \lambda_1)} (C^TC - \lambda_1 I)$. It is straightforward to check that $X$ is a feasible solution to \eqref{eqn:sch_number_complement}, and that
$$\vartheta'(\overline{G}) \geq \text{Tr}(JX) = \frac{\omega^2 - \lambda_1}{\omega - \lambda_1} > \omega. \qedhere$$
\end{proof}

\begin{corollary}
\label{cor:G_perfect}
A graph $G$ is perfect if and only if $\omega(H) = \vartheta'(\overline{H})$ for every induced subgraph $H$ of $G$.
\end{corollary}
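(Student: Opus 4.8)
The plan is to prove both directions directly, using only the sandwich inequality~\eqref{eqn:sandwich} and Lemma~\ref{lem:minimal_imperfect}; no appeal to the strong perfect graph theorem is needed (cf. Remark~\ref{rem:no_spgt}).

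For the forward implication, suppose $G$ is perfect and let $H$ be an arbitrary induced subgraph of $G$. Since perfectness is hereditary by definition, $H$ is perfect, so $\omega(H) = \chi(H)$. Applying~\eqref{eqn:sandwich} to $H$ then gives $\omega(H) \leq \vartheta'(\overline{H}) \leq \chi(H) = \omega(H)$, which forces $\omega(H) = \vartheta'(\overline{H})$. So the equality holds for every induced subgraph of $G$.

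For the converse, I would argue by contradiction: assume $\omega(H) = \vartheta'(\overline{H})$ for every induced subgraph $H$ of $G$, but that $G$ is not perfect. Since $G$ is an induced subgraph of itself, the family of imperfect induced subgraphs of $G$ is nonempty; pick a member $H^{*}$ of this family with the fewest vertices. Every proper induced subgraph of $H^{*}$ is again an induced subgraph of $G$, but has strictly fewer vertices than $H^{*}$, hence is perfect by minimality of $H^{*}$. Therefore $H^{*}$ is a minimal imperfect graph, and Lemma~\ref{lem:minimal_imperfect} gives $\omega(H^{*}) < \vartheta'(\overline{H^{*}})$, contradicting the hypothesis applied to $H = H^{*}$. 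Hence $G$ is perfect, completing the proof.

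I do not expect a genuine obstacle here: the corollary falls out almost immediately once Lemma~\ref{lem:minimal_imperfect} is in hand, since that lemma is where the real work lies. The only point needing any care is the reduction in the converse direction, which is the standard ``smallest imperfect induced subgraph is minimal imperfect'' observation; I would state it explicitly so the reader sees why the hypothesis over \emph{all} induced subgraphs (rather than $G$ alone) is exactly what is required, echoing the discussion in Remark~\ref{rem:when_sos}.
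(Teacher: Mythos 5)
Your proof is correct and follows essentially the same route as the paper: the forward direction via the sandwich inequality $\omega(H) \leq \vartheta'(\overline{H}) \leq \chi(H)$, and the converse via Lemma~\ref{lem:minimal_imperfect} applied to a minimal imperfect induced subgraph. The only difference is that you spell out the standard extraction of a vertex-minimal imperfect induced subgraph, which the paper states without elaboration.
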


\begin{proof}
If $G$ is perfect, then by definition, $\omega(H) = \chi(H)$ for every induced subgraph $H$ of $G$. Hence, by~\eqref{eqn:sandwich}, we have $\omega(H) = \vartheta'(\overline{H})$ for every induced subgraph $H$ of $G$. If $G$ is not perfect, then it contains a minimal imperfect graph $H_{\star}$. By Lemma \ref{lem:minimal_imperfect}, $\omega(H_{\star}) < \vartheta'(\overline{H}_{\star})$.
\end{proof}

We are now ready to present the proof of Theorem \ref{thm:main_thm}, which we restate here for ease of reference.

\begin{customtheorem}{1.1}
A graph is perfect if and only if it is sos-perfect.
\label{thm:main_thm_2}
\end{customtheorem}

\begin{proof}
By Corollary \ref{cor:G_perfect}, a graph $G$ is perfect if and only if $\omega(H) = \vartheta'(\overline{H})$ for every induced subgraph $H$ of $G$, which by Corollary~\ref{cor:pgx_nonnegative_sos} holds if and only if $p_H(x)$ is sos for every induced subgraph $H$ of $G$.
\end{proof}

\begin{remark}
\label{rem:no_spgt}
It is known that if $G$ is an odd hole or an odd antihole, then $\omega(G) < \vartheta'(\overline{G})$ (see, e.g., Proposition 15 and Proposition 19 in \cite{PVZ}, or see \cite{DKLP, LV1, LV2}).\footnote{The fact that for an odd hole or an odd antihole $G$, the inequality $\omega(G) < \vartheta'(\overline{G})$ holds also follows immediately from Lemma~\ref{lem:minimal_imperfect} since odd holes and odd antiholes are clearly minimal imperfect graphs.} Hence, \emph{assuming} the strong perfect graph theorem, one can bypass Lemma~\ref{lem:minimal_imperfect} in the proof of Theorem~\ref{thm:main_thm_2}. Indeed, if a graph $G$ is not perfect, then by the strong perfect graph theorem, it contains either an odd hole or an odd antihole, call it $H$. Since $\omega(H) < \vartheta'(\overline{H})$, by Corollary~\ref{cor:pgx_nonnegative_sos}\eqref{part_bb}, the polynomial $p_H(x)$ is not sos. Therefore, $G$ is not sos-perfect. However, we purposefully want to avoid the use of the highly-nontrivial strong perfect graph theorem in the proof of Theorem~\ref{thm:main_thm_2}. Indeed, our hope is that Theorem~\ref{thm:main_thm_2} could lead to an algebraic proof of the strong perfect graph theorem in the future (see Section~\ref{subsec:revisit_structural}).
\end{remark}

\section{Nonnegative Polynomials That Are Not Sums of Squares}\label{sec:nonnegative_but_not_sos}

As mentioned in Section~\ref{subsec:sos}, Hilbert proved the existence of nonnegative polynomials that are not sums of squares in~\cite{Hilbert}, while the first examples of such polynomials were constructed by Motzkin~\cite{Motzkin} and Robinson~\cite{Robinson} many years later. Many other examples have appeared in the literature over the years; see, e.g.,~\cite{Reznick, ChoiLam1, ChoiLam2, ChoiLamRez80}. Understanding the distinction between nonnegative polynomials and sos polynomials is an active area of research. In relatively low degrees and dimensions, constructing examples of nonnegative polynomials that are not sos seems to be a nontrivial task. 

In Section~\ref{subsec:graphs}, we provide several infinite families of nonnegative polynomials that are not sos through various families of imperfect graphs. In Section~\ref{subsec:graph_operations}, we describe certain operations on graphs that allow us to generate more nonnegative polynomials that are not sos starting from existing ones. In Section~\ref{subsec:random_graphs}, by appealing to the literature on random graph theory, we show that for a fixed parameter $p$ and for large enough $n$, the polynomial $p_{G_{n,p}}(x)$ associated with the Erd\H{o}s-R\'enyi random graph $G_{n,p}$ is nonnegative but not sos with high probability. In Section~\ref{subsec:separating_hyper}, we provide an explicit hyperplane that separates a given non-sos polynomial $p_G(x)$ from the set of sos polynomials. Finally, in Section~\ref{subsec:convexity}, we show that an example of a \emph{convex} nonnegative polynomial that is not sos cannot arise from our graph-theoretic constructions. The construction of such a polynomial was an open problem until recently~\cite{Saunderson}.

\subsection{From imperfect graphs to nonnegative polynomials that are not sos} \label{subsec:graphs}

\subsubsection{Odd holes and odd antiholes} 

Recall that odd holes and odd antiholes are minimal imperfect graphs, i.e., they are not perfect but their proper induced subgraphs are all perfect. Hence, by Lemma~\ref{lem:minimal_imperfect} and Corollary~\ref{cor:pgx_nonnegative_sos}, for every odd hole and odd antihole $G$, the polynomial $p_G(x)$ is a nonnegative polynomial that is not sos. This yields an infinite family of degree-4 polynomials that are nonnegative but not sos. As an example, consider the smallest minimal imperfect graph $C_5$ and the corresponding polynomial $p_{C_5} (x)$:
$$p_{C_5} (x) = -4 (x_1^2x_2^2+x_2^2x_3^2+x_3^2x_4^2+x_4^2x_5^2+x_1^2x_5^2) + (x_1^2+x_2^2+x_3^2+x_4^2+x_5^2)^2.$$

This polynomial is known as the ``Horn form'' in the sos community (see \cite{Reznick}). Similarly, the polynomials $p_{C_7}(x), p_{C_9}(x), \dots, p_{\overline{C}_7}(x), p_{\overline{C}_9}(x), \dots$ are all nonnegative but not sos. We recall that for $m \geq 2$, we have $\omega(C_{2m+1}) = 2$ and $\omega(\overline{C_{2m+1}}) = m$, and therefore it is immediate to explicitly write down the polynomial $p_G(x)$ when $G$ is an odd hole or an odd antihole.

Odd holes are a special case of the so-called $\alpha$-critical graphs, i.e., graphs for which the removal of any edge increases the stability number. It follows from \cite[Corollary 5.3.]{LV2} that if $G$ is the complement of an $\alpha$-critical graph and not complete multipartite (see Section \ref{subsec:dsos_sdsos} for the definition), then $\omega(G) < \vartheta'(\overline{G})$. Therefore, the fact that odd antiholes yield nonnegative polynomials that are not sos also follows from \cite[Corollary 5.3.]{LV2}.

\subsubsection{Powers of cycles and their complements}

The conclusion of Lemma \ref{lem:minimal_imperfect} holds for a more general class of graphs than minimal imperfect graphs. A graph $G$ is called \emph{partitionable} (or an \emph{$(\alpha,\omega)$-graph}) if there exist integers $\alpha \geq 2$ and $\omega \geq 2$ such that $|V(G)| = \alpha \omega +1$, and for every vertex $v$, there is a partition of $V(G) \setminus \{v\}$ into $\alpha$ many cliques of size $\omega$ and $\omega$ many independent sets of size $\alpha$. Using only elementary linear algebra, the authors in~\cite{BHT} show that every partitionable graph $G$ with $n$ vertices has $n$ maximum cliques, every vertex of $G$ is in exactly $\omega(G)$ maximum cliques, and the max-clique matrix $C$ of $G$ is non-singular. Notice that these were the only properties of minimal imperfect graphs that were used to prove Lemma~\ref{lem:minimal_imperfect}. Hence, the same proof implies that if $G$ is a partitionable graph, then $\omega(G) < \vartheta'(\overline{G})$. Consequently, by Corollary~\ref{cor:pgx_nonnegative_sos}, partitionable graphs provide an infinite family of polynomials that are nonnegative but not sos.

It can be shown that a minimal imperfect graph $G$ is partitionable with $\alpha = \alpha(G)$ and $\omega~=~\omega(G)$. There are, however, many other partitionable graphs; see, e.g., \cite{Pecher1, Pecher2} for Cayley partitionable graphs, and \cite{CGPW} for other constructions of partitionable graphs. The simplest examples are ``powers'' of cycles. The $k^{th}$ \emph{power} of a graph $G$, denoted by $G^k$, is a graph with the same vertex set as $G$, and with two vertices adjacent if and only if their distance\footnote{The distance between two vertices in a graph is the number of edges in a shortest path between them.} in $G$ is at most $k$. For every integer $\alpha \geq 2$ and $\omega \geq 2$, the graph $C_{\alpha\omega+1}^{\omega-1}$ is a partitionable graph; see~\cite{CGPW}. Notice that if $\omega = 2$, then the graphs $C_{\alpha\omega+1}^{\omega-1} = C_{2\alpha+1}$ are precisely odd holes, and if $\alpha = 2$, then the graphs $C_{\alpha\omega+1}^{\omega-1} = C_{2\omega+1}^{\omega-1} = \overline{C_{2\omega+1}}$ are precisely odd antiholes. However, for $\alpha, \omega \geq 3$, we obtain several other partitionable graphs that are not odd holes or odd antiholes, such as $C_{10}^{2}, C_{13}^{2}, C_{13}^{3}, C_{16}^{2}, C_{16}^{4}, C_{17}^{3}, \dots$. Moreover, it is clear from the definition that a graph is partitionable if and only if its complement is partitionable. Hence, for $\alpha \geq 2$ and $\omega \geq 2$, both $C_{\alpha\omega+1}^{\omega-1}$ and $\overline{C_{\alpha\omega+1}^{\omega-1}}$ provide infinite families of graphs whose associated polynomials are nonnegative but not sos. We remark that the clique number of $C_{\alpha\omega+1}^{\omega-1}$ is $\omega$ and the clique number of $\overline{C_{\alpha\omega+1}^{\omega-1}}$ is $\alpha$, and therefore it is immediate to explicitly write down the polynomial $p_G(x)$ when $G$ is the graph $C_{\alpha\omega+1}^{\omega-1}$ or $\overline{C_{\alpha\omega+1}^{\omega-1}}$.

\subsubsection{Paley graphs}

Paley graphs are graphs constructed from the elements of certain finite fields by connecting pairs of elements that differ by a quadratic residue. More precisely, for a prime number\footnote{Paley graphs are more generally defined for prime powers. Here we are only interested in Paley graphs with a prime number of vertices.} $q$ with $q \equiv 1~\pmod 4$, the Paley graph $P_q$ is the graph with vertices the elements of the finite field $F_q$, which can be represented by integers $0, 1, \dots, q-1$, and an edge between two vertices $x$ and $y$ if and only if $x- y = a^2$ for some nonzero element $a \in F_q$. Paley graphs have been extensively studied due to their interesting symmetry properties. In particular, Paley graphs are self-complementary and edge-transitive (see, e.g.,~\cite{BelaBol} for these and other properties of Paley graphs). See Figure~\ref{fig:paley_graphs} for two examples of Paley graphs.
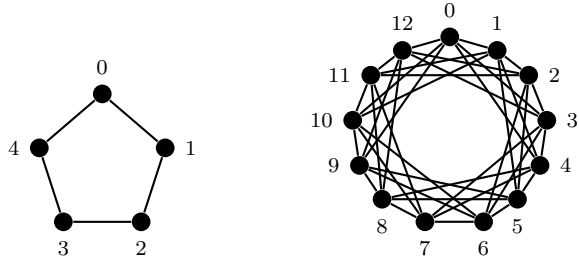
\begin{figure}[h]
\begin{center}
\begin{tikzpicture}[scale=0.22]

\node[label=above:{\scriptsize{$0$}}, inner sep=2.5pt, fill=black, circle] at (0,4.5)(v0){}; 
\node[label=right:{\scriptsize{$1$}}, inner sep=2.5pt, fill=black, circle] at (3.804226, 1.2361)(v1){}; 
\node[label=below:{\scriptsize{$2$}}, inner sep=2.5pt, fill=black, circle] at (2.351141, -3.2361)(v2){}; 
\node[label=below:{\scriptsize{$3$}}, inner sep=2.5pt, fill=black, circle] at (-2.351141, -3.2361)(v3){}; 
\node[label=left:{\scriptsize{$4$}}, inner sep=2.5pt, fill=black, circle] at (-3.804226, 1.2361)(v4){};

\draw[black, thick] (v0) -- (v1);
\draw[black, thick] (v0) -- (v4);
\draw[black, thick] (v1) -- (v2);
\draw[black, thick] (v2) -- (v3);
\draw[black, thick] (v3) -- (v4);

\end{tikzpicture}
\hspace{1cm}
\begin{tikzpicture}[scale=0.3]

\node[label=above:{\scriptsize{$0$}}, inner sep=2.5pt, fill=black, circle] at (1.5,8.2)(v0){}; 
\node[label=above:{\scriptsize{$1$}}, inner sep=2.5pt, fill=black, circle] at (3.6, 7.6)(v1){}; 
\node[label=right:{\scriptsize{$2$}}, inner sep=2.5pt, fill=black, circle] at (5, 6.5)(v2){}; 
\node[label=right:{\scriptsize{$3$}}, inner sep=2.5pt, fill=black, circle] at (5.8, 4.5)(v3){}; 
\node[label=right:{\scriptsize{$4$}}, inner sep=2.5pt, fill=black, circle] at (5.5, 2.5)(v4){};
\node[label=below:{\scriptsize{$5$}}, inner sep=2.5pt, fill=black, circle] at (4.5,1)(v5){};

\node[label=below:{\scriptsize{$6$}}, inner sep=2.5pt, fill=black, circle] at (3, 0)(v6){};
\node[label=below:{\scriptsize{$7$}}, inner sep=2.5pt, fill=black, circle] at (0.4, 0)(v7){};

\node[label=below:{\scriptsize{$8$}}, inner sep=2.5pt, fill=black, circle] at (-1.5, 1)(v8){}; 
\node[label=left:{\scriptsize{$9$}}, inner sep=2.5pt, fill=black, circle] at (-2.5, 2.5)(v9){};
\node[label=left:{\scriptsize{$10$}}, inner sep=2.5pt, fill=black, circle] at (-2.8, 4.5)(v10){}; 
\node[label=left:{\scriptsize{$11$}}, inner sep=2.5pt, fill=black, circle] at (-2, 6.5)(v11){}; 
\node[label=above:{\scriptsize{$12$}}, inner sep=2.5pt, fill=black, circle] at (-0.6, 7.6)(v12){};

\draw[black, thick] (v0) -- (v1);
\draw[black, thick] (v1) -- (v2);
\draw[black, thick] (v2) -- (v3);
\draw[black, thick] (v3) -- (v4);
\draw[black, thick] (v4) -- (v5);
\draw[black, thick] (v5) -- (v6);
\draw[black, thick] (v6) -- (v7);
\draw[black, thick] (v7) -- (v8);
\draw[black, thick] (v8) -- (v9);
\draw[black, thick] (v9) -- (v10);
\draw[black, thick] (v10) -- (v11);
\draw[black, thick] (v11) -- (v12);
\draw[black, thick] (v12) -- (v0);

\draw[black, thick] (v0) -- (v3);
\draw[black, thick] (v0) -- (v4);
\draw[black, thick] (v0) -- (v9);
\draw[black, thick] (v0) -- (v10);

\draw[black, thick] (v1) -- (v4);
\draw[black, thick] (v1) -- (v5);
\draw[black, thick] (v1) -- (v10);
\draw[black, thick] (v1) -- (v11);

\draw[black, thick] (v2) -- (v5);
\draw[black, thick] (v2) -- (v6);
\draw[black, thick] (v2) -- (v11);
\draw[black, thick] (v2) -- (v12);

\draw[black, thick] (v3) -- (v6);
\draw[black, thick] (v3) -- (v7);
\draw[black, thick] (v3) -- (v12);

\draw[black, thick] (v4) -- (v7);
\draw[black, thick] (v4) -- (v8);
\draw[black, thick] (v5) -- (v8);
\draw[black, thick] (v5) -- (v9);
\draw[black, thick] (v6) -- (v9);
\draw[black, thick] (v6) -- (v10);
\draw[black, thick] (v7) -- (v10);
\draw[black, thick] (v7) -- (v11);
\draw[black, thick] (v8) -- (v11);
\draw[black, thick] (v8) -- (v12);
\draw[black, thick] (v9) -- (v12);

\end{tikzpicture}
\end{center}
\vspace{-0.6cm}
\caption{The Paley graphs on $5$ and $13$ vertices}
\label{fig:paley_graphs}
\end{figure}

Let $G$ be an edge-transitive graph and let $\lambda_{\text{max}}$ and $\lambda_{\text{min}}$ respectively denote the largest and smallest eigenvalues of its adjacency matrix. It is shown in \cite[Corollary 5.3]{GRSS} that $$\vartheta'(\overline{G}) = \vartheta(\overline{G}) = 1 - \frac{\lambda_{\text{max}}}{\lambda_{\text{min}}}.$$
Since the Paley graph $P_q$ is edge-transitive and the distinct eigenvalues of its adjacency matrix are known to be $\frac{1}{2}(q-1)$, $\frac{1}{2}(\sqrt{q} - 1)$ (see, e.g., \cite[Proposition 9.1.1]{BroHae}), and $\frac{1}{2}(- \sqrt{q} - 1)$, it follows that $\vartheta'(\overline{P_q}) = \sqrt{q}$. In particular, for a prime number $q \equiv 1 \pmod 4$, as $\sqrt{q}$ is not an integer, we have $\omega(P_q) < \vartheta'(\overline{P_q})$. Therefore, by Corollary~\ref{cor:pgx_nonnegative_sos}, the polynomial $p_{P_q}(x)$ is nonnegative but not sos. Thus, for primes $q \equiv 1 \pmod 4$, the family of Paley graphs $P_q$ yields another infinite family of polynomials that are nonnegative but not sos.\footnote{Paley graphs on a prime number of vertices form a subclass of the so-called ``circulant graphs''. It is known that for a circulant graph $G$ on a prime number of vertices, we have $\omega(G) < \vartheta'(\overline{G})$ (see \cite{BFL}). Hence, more generally, circulant graphs yield an infinite family of polynomials that are nonnegative but not sos.}

Although the clique number of Paley graphs is in general not known, an upper bound was recently given in \cite{HanPet} for any prime number $q$:
$$\omega(P_q) \leq \frac{\sqrt{2q-1}+1}{2}.$$
Note that the inequality $\frac{\sqrt{2q-1}+1}{2} < \sqrt{q}$ holds for any integer $q > 1$. Therefore, for every prime number $q \equiv 1 \pmod 4$ and for every $k \in \R$ with $\Big \lfloor \frac{\sqrt{2q-1}+1}{2} \Big \rfloor \leq k < \sqrt{q}$, the polynomial $p_{P_q,k} (x)$ as defined in~\eqref{eqn:pgkx} is nonnegative but not sos.

\subsubsection{Mycielski graphs}



For a graph $G$ with vertex set $V(G) = \{v_1, \dots, v_n\}$, the \emph{Mycielskian} of $G$, denoted by $M(G)$, is the graph obtained from $G$ by adding $n+1$ new vertices $u_1, \dots, u_n, w$, and for $1 \leq i \leq n$, making $u_i$ adjacent to the neighbors of $v_i$ and to $w$. The sequence of graphs $M_2, M_3, M_4, \dots$ obtained by starting with the one-edge graph $M_2$ and applying the Mycielskian operation $M_{k+1} = M(M_{k})$ repeatedly for $k\geq 2$ is called the Mycielski graphs. It is well known that for every $k=2,3,\dots$, we have $\omega(M_k) = 2$ and $\chi(M_k) = k$ \cite{Mycielski}. In other words, the Mycielskian operation preserves the property of having clique number equal to 2 but increases the chromatic number. The first few graphs in this sequence are the one-edge graph $M_2$, the $5$-vertex cycle graph $M_3 = C_5$, and the Gr\"otzsch graph $M_4$. See Figure~\ref{fig:mycielski_graphs} for the Mycielski graphs $M_3$ and $M_4$.

\begin{figure}[h]
\begin{center}
\begin{tikzpicture}[scale=0.22]

\node[inner sep=2.5pt, fill=black, circle] at (0,4.5)(v0){}; 
\node[inner sep=2.5pt, fill=black, circle] at (3.804226, 1.2361)(v1){}; 
\node[inner sep=2.5pt, fill=black, circle] at (2.351141, -3.2361)(v2){}; 
\node[inner sep=2.5pt, fill=black, circle] at (-2.351141, -3.2361)(v3){}; 
\node[inner sep=2.5pt, fill=black, circle] at (-3.804226, 1.2361)(v4){};

\draw[black, thick] (v0) -- (v1);
\draw[black, thick] (v0) -- (v4);
\draw[black, thick] (v1) -- (v2);
\draw[black, thick] (v2) -- (v3);
\draw[black, thick] (v3) -- (v4);

\end{tikzpicture}
\hspace{1.3cm}
\begin{tikzpicture}[scale=0.6]
\draw[every node/.style={inner sep=2.5pt,fill=black, circle}]
\foreach \x in {0,...,4}{(90+72*\x:1) node(x\x){}  (90+72*\x:2) node(y\x){}}
(0,0) node{};

\draw[black, thick] (y0) -- (y1);
\draw[black, thick] (y1) -- (y2);
\draw[black, thick] (y2) -- (y3);
\draw[black, thick] (y3) -- (y4);
\draw[black, thick] (y4) -- (y0);

\draw[black, thick] (y0) -- (x1);
\draw[black, thick] (x1) -- (y2);
\draw[black, thick] (y2) -- (x3);
\draw[black, thick] (x3) -- (y4);
\draw[black, thick] (y4) -- (x0);
\draw[black, thick] (x0) -- (y1);
\draw[black, thick] (y1) -- (x2);
\draw[black, thick] (x2) -- (y3);
\draw[black, thick] (y3) -- (x4);
\draw[black, thick] (x4) -- (y0);

\draw[black, thick] (x0) -- (0,0);
\draw[black, thick] (x1) -- (0,0);
\draw[black, thick] (x2) -- (0,0);
\draw[black, thick] (x3) -- (0,0);
\draw[black, thick] (x4) -- (0,0);

\end{tikzpicture}
\end{center}
\vspace{-0.5cm}
\caption{The Mycielski graphs $M_3$ and $M_4$}
\label{fig:mycielski_graphs}
\end{figure}
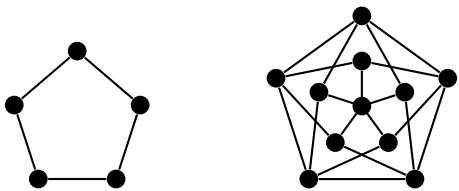

Since $\omega(M_3) < \vartheta'(\overline{M_3})$ and since $M_{k+1}$ is obtained from $M_k$ by adding vertices and edges without increasing the clique number, it follows from the arguments in Section \ref{subsec:add_vertices} below that for $k \geq 3$, we have $\omega(M_k) < \vartheta'(\overline{M_k})$.\footnote{The inequality $\omega(M_k) < \vartheta'(\overline{M_k})$ for $k \geq 3$ also follows from a lemma of Laurent and Vargas \cite[Lemma 5.6]{LV2} which states that for graphs $G$ with $\omega(G) \leq 2$, $\omega(G) = \vartheta(\overline{G})$ if and only if $\omega(G) = \chi(G)$. Since $2 = \omega(M_k) < \chi(M_k) = k$ for $k \geq 3$, the result follows.} Therefore, by Corollary~\ref{cor:pgx_nonnegative_sos}, the polynomial $p_{M_k}(x)$ is nonnegative but not sos. Thus, for $k \geq 3$, the family of Mycielski graphs $M_k$ yields another infinite family of polynomials that are nonnegative but not sos and that can explicitly be written down (since $\omega(M_k) = 2$).

\subsection{Graph-theoretic operations that preserve the property of being not sos} \label{subsec:graph_operations}

\subsubsection{Adding edges and vertices without increasing the clique number} 
\label{subsec:add_vertices}

It is easy to observe that adding edges to a graph without increasing its clique number preserves the property of being not sos. Indeed, if $H$ is the graph obtained from a graph $G$ by adding the edge $ij$ such that $\omega(H) = \omega(G)$, then from the definition in \eqref{eqn:our_polynomial}, we have $p_G(x) = p_H(x) + 2 \omega(G) x_i^2x_j^2$. This observation also appears in \cite[Lemma 4.9]{LV2} (see also \cite{LV1}) with a slightly different language.

The following lemma, whose proof is simple and thus omitted, also allows us to add new vertices with arbitrary adjacencies.
\begin{lemma}
Let $G$ be a graph with vertex set $V(G) = \{v_1, \dots, v_n\}$ and let $H$ be an induced subgraph of $G$ with $V(H) = \{v_1, \dots, v_k\}$ (i.e., the graph $H$ is obtained from $G$ by deleting the vertices $v_{k+1}, \dots, v_n$). Then, 
$$p_G(x_1, \dots, x_k, 0, \dots, 0) = p_H(x_1, \dots, x_k)$$ if and only if $\omega(G) = \omega(H)$.
\end{lemma}

Using the ``if direction'' of this lemma, we can generate many more nonnegative polynomials that are not sos starting from existing ones. Let $H$ be a graph with $\omega(H) < \vartheta'(\overline{H})$ (such as the graphs described in Section~\ref{subsec:graphs}). By Corollary~\ref{cor:pgx_nonnegative_sos}, the polynomial $p_H(x)$ is nonnegative but not sos. Let $G$ be a graph obtained from $H$ by adding new vertices with arbitrary adjacencies such that $\omega(G) = \omega(H)$. Then, the polynomial $p_{G}(x)$ is nonnegative but not sos. Indeed, since $\omega(G) = \omega(H)$, by setting the variables that correspond to vertices in $V(G) \setminus V(H)$ to zero, we obtain the polynomial $p_H(x)$. Hence, if $p_{G}(x)$ was sos, then $p_H(x)$ would be sos since it is obtained from an sos polynomial by setting some variables to zero. 

As an example of this operation, let $H$ be the graph $C_5$ and $G$ be the graph in Figure \ref{fig:set_to_zero_example} (right), which is arbitrarily constructed by adding vertices to $H$ without increasing the clique number. We have $p_{G}(x_1, \dots, x_5, 0, \dots, 0) = p_H(x_1, \dots, x_5)$, and since $p_H(x_1, \dots, x_5)$ is not sos, the polynomial $p_{G}(x_1, \dots, x_{14})$ is not sos.
\begin{figure}[h]
\begin{center}
\begin{tikzpicture}[scale=0.22]

\node[label=above:{\scriptsize{$v_1$}}, inner sep=2.5pt, fill=black, circle] at (0,4.5)(v1){}; 
\node[label=right:{\scriptsize{$v_2$}}, inner sep=2.5pt, fill=black, circle] at (3.804226, 1.2361)(v2){}; 
\node[label=below:{\scriptsize{$v_3$}}, inner sep=2.5pt, fill=black, circle] at (2.351141, -3.2361)(v3){}; 
\node[label=below:{\scriptsize{$v_4$}}, inner sep=2.5pt, fill=black, circle] at (-2.351141, -3.2361)(v4){}; 
\node[label=left:{\scriptsize{$v_5$}}, inner sep=2.5pt, fill=black, circle] at (-3.804226, 1.2361)(v5){};

\draw[black, thick] (v1) -- (v2);
\draw[black, thick] (v1) -- (v5);
\draw[black, thick] (v2) -- (v3);
\draw[black, thick] (v3) -- (v4);
\draw[black, thick] (v4) -- (v5);

\node at (0, -8.5) {\small{$H$}};

\end{tikzpicture}
\hspace{2cm}
\begin{tikzpicture}[scale=0.22]

\node[label=above:{\scriptsize{$v_1$}}, inner sep=2.5pt, fill=black, circle] at (0,4.5)(v1){}; 
\node[label=left:{\scriptsize{$v_2$}}, inner sep=2.5pt, fill=black, circle] at (3.804226, 1.2361)(v2){}; 
\node[label=below:{\scriptsize{$v_3$}}, inner sep=2.5pt, fill=black, circle] at (2.351141, -3.2361)(v3){}; 
\node[label=below:{\scriptsize{$v_4$}}, inner sep=2.5pt, fill=black, circle] at (-2.351141, -3.2361)(v4){}; 
\node[label=above:{\scriptsize{$v_5$}}, inner sep=2.5pt, fill=black, circle] at (-3.804226, 1.2361)(v5){};
\node[label=left:{\scriptsize{$v_6$}}, inner sep=2.5pt, fill=black, circle] at (-6, 3)(v6){}; 
\node[label=left:{\scriptsize{$v_7$}}, inner sep=2.5pt, fill=black, circle] at (-6, 1)(v7){}; 
\node[label=left:{\scriptsize{$v_8$}}, inner sep=2.5pt, fill=black, circle] at (-6, -1)(v8){};
\node[label=left:{\scriptsize{$v_9$}}, inner sep=2.5pt, fill=black, circle] at (-4.2, -6)(v9){}; 
\node[label=left:{\scriptsize{$v_{10}$}}, inner sep=2.5pt, fill=black, circle] at (-6.4, -3.3)(v10){}; 
\node[label=above:{\scriptsize{$v_{11}$}}, inner sep=2.5pt, fill=black, circle] at (6, 3)(v11){}; 
\node[label=below:{\scriptsize{$v_{12}$}}, inner sep=2.5pt, fill=black, circle] at (6, -1.5)(v12){}; 
\node[label=below:{\scriptsize{$v_{13}$}}, inner sep=2.5pt, fill=black, circle] at (10, -1.5)(v13){}; 
\node[label=above:{\scriptsize{$v_{14}$}}, inner sep=2.5pt, fill=black, circle] at (10, 3)(v14){}; 

\draw[black, thick] (v1) -- (v2);
\draw[black, thick] (v1) -- (v5);
\draw[black, thick] (v2) -- (v3);
\draw[black, thick] (v3) -- (v4);
\draw[black, thick] (v4) -- (v5);
\draw[black, thick] (v5) -- (v6);
\draw[black, thick] (v5) -- (v7);
\draw[black, thick] (v5) -- (v8);
\draw[black, thick] (v4) -- (v9);
\draw[black, thick] (v9) -- (v10);
\draw[black, thick] (v1) -- (v11);
\draw[black, thick] (v3) -- (v12);
\draw[black, thick] (v11) -- (v12);
\draw[black, thick] (v12) -- (v13);
\draw[black, thick] (v13) -- (v14);
\draw[black, thick] (v11) -- (v14);

\node at (0, -8.5) {\small{$G$}};

\end{tikzpicture}
\end{center}
\vspace{-0.5cm}
\caption{The graph $G$ is obtained from the graph $H$ by adding new vertices with arbitrary adjacencies such that $\omega(G) = \omega(H)$, making $p_G$ inherit the property of being not sos from $p_H$}
\label{fig:set_to_zero_example}
\end{figure}
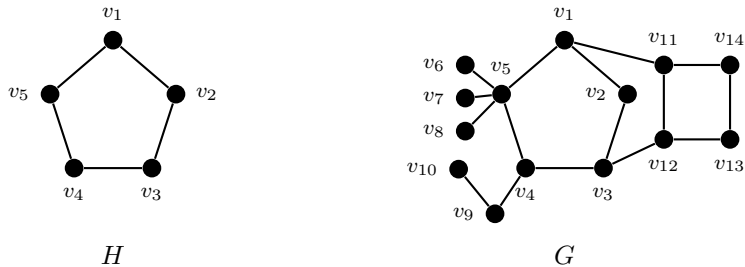

We note that for a graph $H$ for which $p_H(x)$ is not sos, if $G$ is obtained from $H$ by adding new vertices in a way that $\omega(G) > \omega(H)$, then $p_G(x)$ might become sos. See the last two sentences of Remark~\ref{rem:when_sos} for an example.

\subsubsection{Graph join}

The \emph{join} of two graphs $G_1$ and $G_2$, with disjoint vertex sets, is the graph obtained by connecting every vertex of $G_1$ to all vertices of $G_2$. See Figure \ref{fig:join_G1_G2} for an example. It is easy to see that if $G$ is the join of $G_1$ and $G_2$, then $\omega(G) = \omega(G_1) + \omega(G_2)$. It is also true that $\vartheta'(\overline{G}) = \vartheta'(\overline{G_1}) + \vartheta'(\overline{G_2})$ (see Theorem~4.1 in \cite{BFL}). Hence, if we start with two graphs $G_1$ and $G_2$ such that $\omega(G_i) < \vartheta'(\overline{G}_i)$ for either $i=1$ or $i=2$, then the join $G$ of $G_1$ and $G_2$ satisfies $\omega(G) < \vartheta'(\overline{G})$. Therefore, for every graph $G$ obtained this way, the polynomial $p_{G}(x)$ is a nonnegative polynomial that is not sos. As an example, consider the graphs $G_1,G_2$, and their join $G$, given in Figure \ref{fig:join_G1_G2}. We have 
$$4 = \omega(G_1) + \omega(G_2) = \omega(G) < \vartheta'(\overline{G}) = \vartheta'(\overline{G_1}) + \vartheta'(\overline{G_2}) = \sqrt{5} + 2 ,$$
and thus the polynomial $p_G(x)$ is nonnegative but not sos. 
\begin{figure}[h]
\begin{center}
\begin{tikzpicture}[scale=0.22]

\node at (0, -6.5) {\small{$G_1$}};

\node[inner sep=2.5pt, fill=black, circle] at (0,4.5)(v1){}; 
\node[inner sep=2.5pt, fill=black, circle] at (3.804226, 1.2361)(v2){}; 
\node[inner sep=2.5pt, fill=black, circle] at (2.351141, -3.2361)(v3){}; 
\node[inner sep=2.5pt, fill=black, circle] at (-2.351141, -3.2361)(v4){}; 
\node[inner sep=2.5pt, fill=black, circle] at (-3.804226, 1.2361)(v5){};

\draw[black, thick] (v1) -- (v2);
\draw[black, thick] (v1) -- (v5);
\draw[black, thick] (v2) -- (v3);
\draw[black, thick] (v3) -- (v4);
\draw[black, thick] (v4) -- (v5);

\end{tikzpicture}
\hspace{0.5cm}
\begin{tikzpicture}[scale=0.24]

\node at (0, -6.5) {\small{$G_2$}};

\node[inner sep=2.5pt, fill=black, circle] at (0,2)(v6){}; 
\node[inner sep=2.5pt, fill=black, circle] at (0, -2)(v7){}; 

\draw[black, thick] (v6) -- (v7);

\end{tikzpicture}
\hspace{3cm}
\begin{tikzpicture}[scale=0.22]

\node at (2.8, -6.5) {\small{$G$}};

\node[inner sep=2.5pt, fill=black, circle] at (0,4.5)(v1){}; 
\node[inner sep=2.5pt, fill=black, circle] at (3.804226, 1.2361)(v2){}; 
\node[inner sep=2.5pt, fill=black, circle] at (2.351141, -3.2361)(v3){}; 
\node[inner sep=2.5pt, fill=black, circle] at (-2.351141, -3.2361)(v4){}; 
\node[inner sep=2.5pt, fill=black, circle] at (-3.804226, 1.2361)(v5){};
\node[inner sep=2.5pt, fill=black, circle] at (8,2)(v6){}; 
\node[inner sep=2.5pt, fill=black, circle] at (8,-2)(v7){}; 

\draw[black, thick](v1) edge [bend right=20] (v7);
\draw[black, thick](v5) edge [bend left=10] (v6);
\draw[black, thick](v4) edge [bend right=10] (v6);
\draw[black, thick](v4) edge [bend right=35] (v7);

\draw[black, thick] (v1) -- (v2);
\draw[black, thick] (v1) -- (v5);
\draw[black, thick] (v2) -- (v3);
\draw[black, thick] (v3) -- (v4);
\draw[black, thick] (v4) -- (v5);
\draw[black, thick] (v6) -- (v7);

\draw[black, thick] (v6) -- (v1);
\draw[black, thick] (v6) -- (v2);
\draw[black, thick] (v6) -- (v3);
\draw[black, thick] (v7) -- (v2);
\draw[black, thick] (v7) -- (v3);
\draw[black, thick] (v7) -- (v5);

\end{tikzpicture}
\end{center}
\vspace{-0.6cm}
\caption{Since $p_{G_1}$ is not sos, the polynomial $p_G$ associated with the join $G$ of $G_1$ and $G_2$ is not sos}
\label{fig:join_G1_G2}
\end{figure}
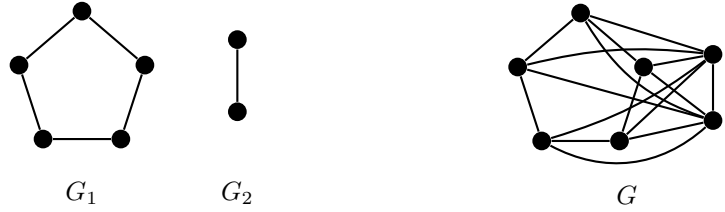

\subsubsection{Graph strong product}

The \emph{strong product} of two graphs $G_1$ and $G_2$, denoted by $G_1 \boxtimes G_2$, is the graph with vertex set $V(G_1 \boxtimes G_2) = V(G_1) \times V(G_2)$, where two vertices $(a_1, a_2)$ and $(b_1, b_2)$ are adjacent if and only if
\begin{itemize}
\itemsep0em
\item $a_1 = b_1$ and $a_2b_2 \in E(G_2)$, or
\item $a_1b_1 \in E(G_1)$ and $a_2 = b_2$, or
\item $a_1b_1 \in E(G_1)$ and $a_2b_2 \in E(G_2)$.
\end{itemize}
It is not difficult to see that\footnote{We would like to warn the reader that the equality $\alpha(G_1 \boxtimes G_2) = \alpha(G_1) \alpha(G_2)$ is not necessarily true. Although $\alpha(G_1 \boxtimes G_2) \geq \alpha(G_1) \alpha(G_2)$ is always true, there are graphs $G_1, G_2$ that make the inequality strict, e.g., $G_1=G_2=C_5$.} $\omega(G_1 \boxtimes G_2) = \omega(G_1) \omega(G_2)$ holds for any two graphs $G_1,G_2$ (see, e.g., Lemma 3.1 in \cite{DGH}). It is also true that\footnote{We would like to warn the reader that the equality $\vartheta'(G_1 \boxtimes G_2) = \vartheta'(G_1) \vartheta'(G_2)$ is not necessarily true. Although $\vartheta'(G_1 \boxtimes G_2) \geq \vartheta'(G_1) \vartheta'(G_2)$ is always true, there are graphs $G_1, G_2$ that make the inequality strict. See \cite{CMRSSW} for details.} $\vartheta'(\overline{G_1 \boxtimes G_2}) = \vartheta'(\overline{G_1}) \vartheta'(\overline{G_2})$ for any two graphs $G_1,G_2$, see, e.g., Theorem 25\footnote{Theorem 25 in \cite{CMRSSW} shows that $\vartheta'(G_1 \ast G_2) = \vartheta'(G_1) \vartheta'(G_2)$, where $G_1 \ast G_2$ denotes the \emph{disjunctive product} of $G_1$ and $G_2$ (see \cite{CMRSSW} for the definition). We then have $\vartheta'(\overline{G_1 \boxtimes G_2}) = \vartheta'(\overline{G_1} \ast \overline{G_2}) = \vartheta'(\overline{G_1}) \vartheta'(\overline{G_2}).$} in \cite{CMRSSW}. (In fact, for our purposes, the inequalities $\omega(G_1 \boxtimes G_2) \leq \omega(G_1) \omega(G_2)$ and $\vartheta'(\overline{G_1 \boxtimes G_2}) \geq \vartheta'(\overline{G_1}) \vartheta'(\overline{G_2})$ are enough.) Hence, if we start with two graphs $G_1$ and $G_2$ such that $\omega(G_i) < \vartheta'(\overline{G}_i)$ for either $i=1$ or $i=2$, then 
$$\omega(G_1 \boxtimes G_2) = \omega(G_1) \omega(G_2) < \vartheta'(\overline{G_1}) \vartheta'(\overline{G_2}) = \vartheta'(\overline{G_1 \boxtimes G_2}),$$
and so the graph $G_1 \boxtimes G_2$ satisfies $\omega(G_1 \boxtimes G_2) < \vartheta'(\overline{G_1 \boxtimes G_2})$. Therefore, for every graph $G$ obtained this way, the polynomial $p_{G}(x)$ is a nonnegative polynomial that is not sos. As an example, consider the graphs $G_1,G_2$, and their strong product $G$, given in Figure \ref{fig:product_G1_G2}. We have 
$$4 = \omega(G_1) \omega(G_2) = \omega(G) < \vartheta'(\overline{G}) = \vartheta'(\overline{G_1}) \vartheta'(\overline{G_2}) = 2\sqrt{5},$$
and thus the polynomial $p_G(x)$ is nonnegative but not sos.
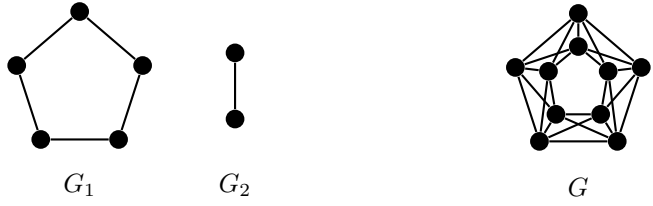
\begin{figure}[h]
\begin{center}
\begin{tikzpicture}[scale=0.22]

\node at (0, -6) {\small{$G_1$}};

\node[inner sep=2.5pt, fill=black, circle] at (0,4.5)(v1){}; 
\node[inner sep=2.5pt, fill=black, circle] at (3.804226, 1.2361)(v2){}; 
\node[inner sep=2.5pt, fill=black, circle] at (2.351141, -3.2361)(v3){}; 
\node[inner sep=2.5pt, fill=black, circle] at (-2.351141, -3.2361)(v4){}; 
\node[inner sep=2.5pt, fill=black, circle] at (-3.804226, 1.2361)(v5){};

\draw[black, thick] (v1) -- (v2);
\draw[black, thick] (v1) -- (v5);
\draw[black, thick] (v2) -- (v3);
\draw[black, thick] (v3) -- (v4);
\draw[black, thick] (v4) -- (v5);

\end{tikzpicture}
\hspace{0.5cm}
\begin{tikzpicture}[scale=0.22]

\node at (0, -6) {\small{$G_2$}};

\node[inner sep=2.5pt, fill=black, circle] at (0,2)(v6){}; 
\node[inner sep=2.5pt, fill=black, circle] at (0, -2)(v7){}; 

\draw[black, thick] (v6) -- (v7);

\end{tikzpicture}
\hspace{3cm}
\begin{tikzpicture}[scale=0.22]

\node at (0, -6) {\small{$G$}};

\node[inner sep=2.5pt, fill=black, circle] at (0,4.5)(v1){}; 
\node[inner sep=2.5pt, fill=black, circle] at (3.804226, 1.2361)(v2){}; 
\node[inner sep=2.5pt, fill=black, circle] at (2.351141, -3.2361)(v3){}; 
\node[inner sep=2.5pt, fill=black, circle] at (-2.351141, -3.2361)(v4){}; 
\node[inner sep=2.5pt, fill=black, circle] at (-3.804226, 1.2361)(v5){};

\node[inner sep=2.5pt, fill=black, circle] at (0,2.5)(v6){}; 
\node[inner sep=2.5pt, fill=black, circle] at (1.804226, 1)(v7){}; 
\node[inner sep=2.5pt, fill=black, circle] at (1.351141, -1.6)(v8){}; 
\node[inner sep=2.5pt, fill=black, circle] at (-1.351141, -1.6)(v9){}; 
\node[inner sep=2.5pt, fill=black, circle] at (-1.804226, 1)(v10){};

\draw[black, thick] (v1) -- (v2);
\draw[black, thick] (v2) -- (v3);
\draw[black, thick] (v3) -- (v4);
\draw[black, thick] (v4) -- (v5);
\draw[black, thick] (v5) -- (v1);

\draw[black, thick] (v6) -- (v7);
\draw[black, thick] (v7) -- (v8);
\draw[black, thick] (v8) -- (v9);
\draw[black, thick] (v9) -- (v10);
\draw[black, thick] (v10) -- (v6);

\draw[black, thick] (v1) -- (v6);
\draw[black, thick] (v2) -- (v7);
\draw[black, thick] (v3) -- (v8);
\draw[black, thick] (v4) -- (v9);
\draw[black, thick] (v5) -- (v10);

\draw[black, thick] (v1) -- (v7);
\draw[black, thick] (v2) -- (v6);

\draw[black, thick] (v2) -- (v8);
\draw[black, thick] (v3) -- (v7);

\draw[black, thick] (v3) -- (v9);
\draw[black, thick] (v4) -- (v8);

\draw[black, thick] (v4) -- (v10);
\draw[black, thick] (v5) -- (v9);

\draw[black, thick] (v5) -- (v6);
\draw[black, thick] (v1) -- (v10);

\end{tikzpicture}
\end{center}
\vspace{-0.6cm}
\caption{Since $p_{G_1}$ is not sos, the polynomial $p_G$ associated with the strong product $G$ of $G_1$ and $G_2$ is not sos}
\label{fig:product_G1_G2}
\end{figure}

\begin{remark}
Although not explicitly mentioned, it follows from Theorem 3 of a work of Dickinson and de Zeeuw~\cite{DdZ}, which was recently brought to our attention, that for every graph $G$ with at least one edge whose complement is connected, $\alpha$-critical, and ``$\alpha$-covered'' (see~\cite{DdZ} for definitions), the nonnegative polynomial $p_G(x)$ in~\eqref{eqn:our_polynomial} would not be sos. However, these three conditions are quite restrictive for our purposes. In particular, using the operations presented above, it is easy to produce many examples of graphs $G$ for which $p_G(x)$ is not sos and such that any one of three conditions of \cite[Theorem 3]{DdZ} is violated.
\end{remark}

\subsection{Random graphs} \label{subsec:random_graphs}

For a positive integer $n$ and a real number $p \in (0,1)$, an Erd\H{o}s-R\'enyi random graph $G_{n,p}$, introduced in~\cite{ErdRen}, is a graph on $n$ vertices where each vertex is adjacent to each other vertex with probability $p$, independent of all other choices. In this section, we consider the polynomial $p_{G_{n,p}}(x)$ (as defined in~\eqref{eqn:our_polynomial}) associated with an Erd\H{o}s-R\'enyi random graph $G_{n,p}$. For a constant $p \in (0,1)$, we would like to understand the probability that the nonnegative polynomial $p_{G_{n,p}}(x)$ is not sos as $n$ tends to infinity.

\begin{lemma}
Let $p \in (0,1)$ be fixed. Then, for every integer $n$ that satisfies $1 \leq 2 \log_{1/p} n  \leq n$, we have
$$\textup{Pr} \Big(\omega(G_{n,p}) < 2 \log_{1/p} n \Big) \geq 1 - n \: \Bigg( \frac{e}{2 \log_{1/p} n} \Bigg)^{2 \log_{1/p} n}.$$
\label{lem:clique_random}
\end{lemma}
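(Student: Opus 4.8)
The plan is to bound the probability that $G_{n,p}$ contains a clique of size $k := \lceil 2\log_{1/p} n\rceil$ (or rather of size at least $2\log_{1/p} n$) by a straightforward first-moment / union-bound argument. First I would let $X_k$ denote the number of cliques of size $k$ in $G_{n,p}$. Since each fixed set of $k$ vertices forms a clique with probability $p^{\binom{k}{2}}$, linearity of expectation gives $\mathbb{E}[X_k] = \binom{n}{k} p^{\binom{k}{2}}$. By Markov's inequality (equivalently, the union bound over all $k$-subsets),
$$
\textup{Pr}\big(\omega(G_{n,p}) \geq k\big) = \textup{Pr}(X_k \geq 1) \leq \mathbb{E}[X_k] = \binom{n}{k}\, p^{\binom{k}{2}}.
$$

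Next I would simplify the right-hand side into the clean closed form stated in the lemma. Using the standard estimate $\binom{n}{k} \leq \big(\tfrac{en}{k}\big)^k$ and writing $p^{\binom{k}{2}} = p^{k(k-1)/2} = \big(p^{(k-1)/2}\big)^k$, we get
$$
\binom{n}{k}\, p^{\binom{k}{2}} \leq \left(\frac{en}{k}\cdot p^{(k-1)/2}\right)^{k}.
$$
Now substitute the choice $k \geq 2\log_{1/p} n$, i.e.\ $p^{-k/2} \geq n$, equivalently $p^{k/2} \leq 1/n$. Then $p^{(k-1)/2} = p^{-1/2}\cdot p^{k/2} \leq p^{-1/2}/n = \tfrac{1}{n\sqrt p}$, and one checks the bound collapses to roughly $n\big(\tfrac{e}{k}\big)^{k}$ after extracting one factor of $n$; concretely, writing $k_0 := 2\log_{1/p} n$, one obtains $\textup{Pr}(\omega(G_{n,p}) \geq k_0) \leq n\big(\tfrac{e}{k_0}\big)^{k_0}$, which rearranges to the displayed inequality
$$
\textup{Pr}\Big(\omega(G_{n,p}) < 2\log_{1/p} n\Big) \geq 1 - n\left(\frac{e}{2\log_{1/p} n}\right)^{2\log_{1/p} n}.
$$
The hypothesis $1 \leq 2\log_{1/p} n \leq n$ is exactly what is needed to make $\binom{n}{k} \leq (en/k)^k$ applicable and to ensure $k$ is a meaningful clique size (at least $1$ and at most $n$); it also makes the bound nonvacuous.

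The only mildly delicate point — and the main obstacle, such as it is — is keeping the arithmetic tidy when $2\log_{1/p} n$ is not an integer: one should argue about the event $\{\omega(G_{n,p}) \geq \lceil 2\log_{1/p}n\rceil\}$ versus the stated event $\{\omega(G_{n,p}) < 2\log_{1/p}n\}$ and verify the chain of inequalities $p^{(k-1)/2}\cdot \tfrac{e n}{k} \cdot (\text{stuff}) $ still yields the clean exponent $2\log_{1/p} n$ rather than $\lceil 2\log_{1/p}n\rceil$. Since the map $k \mapsto (e/k)^k$ is decreasing for $k \geq 1$ and $p^{\binom k2}$ is decreasing in $k$, passing from the ceiling back down to $2\log_{1/p} n$ only weakens the bound in the right direction, so this is purely bookkeeping. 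Everything else is the textbook first-moment estimate for the clique number of $G_{n,p}$, and no structural graph theory or properties of $p_G(x)$ are needed here — this lemma is a self-contained probabilistic preliminary that will be combined in the next step with Corollary~\ref{cor:pgx_nonnegative_sos} and a matching lower bound on $\vartheta'(\overline{G_{n,p}})$.
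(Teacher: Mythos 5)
Your proposal is correct and follows essentially the same route as the paper: the first-moment bound $\textup{Pr}(\omega \geq r) \leq \textup{E}[X_r] = \binom{n}{r}p^{\binom{r}{2}}$, the estimate $\binom{n}{r} \leq (en/r)^r$, and the substitution $r = 2\log_{1/p} n$ (which makes $p^{r/2} = 1/n$ and collapses the bound to $n(e/r)^r$). If anything you are slightly more careful than the paper, which plugs the possibly non-integral value $r = 2\log_{1/p} n$ directly into $\binom{n}{r}$ without commenting on the rounding issue you flag.
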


\begin{proof}
Let $X_r$ denote the number of cliques of size $r$ in $G_{n,p}$. Note that
\begin{equation}
\begin{aligned}
\hspace{2cm}
\textup{Pr} \Big(\omega(G_{n,p}) < r \Big) = \textup{Pr} (X_r = 0) = 1 - \textup{Pr} (X_r \geq 1) \geq 1 - \textup{E}[X_r].
\end{aligned}
\label{eqn:prob_Xr}
\end{equation}
For any $n \geq r \geq 1$, we have
$$\textup{E}[X_r] = {n \choose r} p^{{r \choose 2}} \leq \Bigg(\frac{ne}{r} \Bigg)^r \: p^{\frac{r(r-1)}{2}},$$
where the inequality follows since ${n \choose r} \leq \big(\frac{ne}{r} \big)^r$. If we set $r = 2 \log_{1/p} n$, we obtain
$$\textup{E}[X_r] \leq  n \: \Bigg( \frac{e}{2 \log_{1/p} n} \Bigg)^{2 \log_{1/p} n},$$
which combined with \eqref{eqn:prob_Xr} proves the claim.
\end{proof}

\begin{lemma}[\cite{Coja-Oghlan}]
Let $p \in (0,1)$ be fixed. Then, there exists a constant $\lambda > 0$ such that for large enough $n$, we have
$$\textup{Pr} \Bigg( \vartheta'(\overline{G_{n,p}}) > \frac{1}{2(\lambda+4)} \sqrt{\frac{np}{1-p}}\Bigg) \geq 1 - e^{-n}.$$
\label{lem:schrijver_random}
\end{lemma}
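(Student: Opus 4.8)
The plan is to derive this from Coja-Oghlan's analysis of the Lov\'asz--Schrijver number of dense random graphs \cite{Coja-Oghlan}. Since the bound is asymptotic and $p$ is fixed, it suffices to exhibit one explicit feasible solution to the semidefinite program \eqref{eqn:sch_number_complement} defining $\vartheta'(\overline{G_{n,p}})$ and to show that its objective value is as large as claimed with probability at least $1-e^{-n}$. First I would write $A$ for the adjacency matrix of $G=G_{n,p}$, fix a parameter $\mu=\mu(n,p)>0$ to be chosen, and take
\[ X \;:=\; \frac{1}{\mu n}\,\bigl(A+\mu I\bigr). \]
Because $A$ is supported exactly on the edges of $G$, the matrix $X$ has $X_{ij}=0$ whenever $ij\notin E(G)$, has nonnegative entries, and satisfies $\text{Tr}(X)=\tfrac{1}{\mu n}(\text{Tr}(A)+\mu n)=1$. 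So $X$ is feasible for \eqref{eqn:sch_number_complement} as soon as $A+\mu I\succeq 0$, i.e. as soon as $\mu\geq -\lambda_{\min}(A)$; in that case
\[ \vartheta'(\overline{G_{n,p}}) \;\geq\; \text{Tr}(JX) \;=\; \frac{1}{\mu n}\,\mathbf{1}^{T}(A+\mu I)\mathbf{1} \;=\; 1+\frac{2\,e(G_{n,p})}{\mu n}. \]

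Two probabilistic inputs then finish the argument. The easy one is a lower bound on the edge count: a Chernoff bound gives $e(G_{n,p})\geq(1-o(1))\,p\binom n2$ with probability at least $1-e^{-\Omega(n^{2})}$, which is far more than what we need. The substantial one is a high-probability lower bound on the least eigenvalue: there is a constant $\lambda=\lambda(p)$ such that $\lambda_{\min}\bigl(A_{G_{n,p}}\bigr)\geq-\lambda\sqrt{np(1-p)}$ with probability at least $1-e^{-n}$. Taking $\mu=\lambda\sqrt{np(1-p)}$ and using $\tfrac{pn}{\sqrt{np(1-p)}}=\sqrt{\tfrac{np}{1-p}}$, the displayed inequality becomes
\[ \vartheta'(\overline{G_{n,p}}) \;\geq\; 1+(1-o(1))\,\frac{1}{\lambda}\sqrt{\frac{np}{1-p}} \;>\; \frac{1}{2(\lambda+4)}\sqrt{\frac{np}{1-p}} \]
for all large $n$, since $\tfrac1\lambda>\tfrac{1}{2(\lambda+4)}$ and the generous slack in the denominator $\lambda+4$ absorbs the additive $1$ and the $o(1)$ loss; a union bound over the two failure events keeps the probability at $1-e^{-n}$.

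The hard part is the eigenvalue estimate, and this is exactly where \cite{Coja-Oghlan} is invoked. The \emph{typical} value $|\lambda_{\min}(A)|\sim 2\sqrt{np(1-p)}$ is classical (semicircle law / trace method, F\"uredi--Koml\'os), but those arguments certify the correct $\Theta(\sqrt n)$ order only with probability $1-o(1)$; pushing the failure probability down to $e^{-n}$ while retaining the $\Theta(\sqrt n)$ scale needs the sharper concentration-of-eigenvalues machinery of \cite{Coja-Oghlan} (a plain Azuma/McDiarmid bound, for which $\lambda_{\min}$ changes by at most $1$ per edge, only controls deviations much larger than $n^{3/2}$ at level $e^{-n}$ and is useless here). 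Granting that estimate, the computation above is routine; alternatively, one can simply quote the theorem of \cite{Coja-Oghlan} stating that $\vartheta'(\overline{G_{n,p}})=\Theta\!\bigl(\sqrt{np/(1-p)}\bigr)$ with probability $1-e^{-\Omega(n)}$ and observe that the constant in the lower bound may be taken to be $\tfrac{1}{2(\lambda+4)}$ for $\lambda$ the eigenvalue-deviation constant.
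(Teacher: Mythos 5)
The paper does not actually prove this lemma---it is imported verbatim from \cite{Coja-Oghlan} with no in-paper argument---so there is nothing internal to compare your proposal against; what follows is an assessment of your reconstruction on its own merits. Your argument is the standard one (it goes back to Juh\'asz's lower bound for the theta function of random graphs and is essentially how the lower bound in \cite{Coja-Oghlan} is obtained), and the deterministic part checks out: $X=\frac{1}{\mu n}(A+\mu I)$ is supported on the edges of $G_{n,p}$ plus the diagonal, has unit trace and nonnegative entries, and is psd once $\mu\geq|\lambda_{\min}(A)|$; because it is simultaneously psd and entrywise nonnegative it certifies a lower bound on $\vartheta'(\overline{G_{n,p}})$ and not merely on $\vartheta(\overline{G_{n,p}})$, which is the one point requiring care here and which you handle correctly. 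The computation $\text{Tr}(JX)=1+2e(G)/(\mu n)$ with $\mu=\lambda\sqrt{np(1-p)}$ then yields $1+(1-o(1))\lambda^{-1}\sqrt{np/(1-p)}$, which indeed dominates $\frac{1}{2(\lambda+4)}\sqrt{np/(1-p)}$ for large $n$. Two caveats. First, the entire content of the lemma is concentrated in the tail bound $\textup{Pr}\bigl(\lambda_{\min}(A_{G_{n,p}})<-\lambda\sqrt{np(1-p)}\bigr)\leq e^{-\Omega(n)}$: you correctly observe that edge-exposure Azuma is far too weak at this probability scale and that one needs Talagrand-type concentration of $\lambda_{\min}$ (or of $\vartheta'$ itself) about its mean together with a F\"uredi--Koml\'os-type expectation bound, but you do not supply this estimate, so your write-up is a clean reduction of the lemma to the one genuinely hard input of \cite{Coja-Oghlan} rather than a self-contained proof---a reasonable division of labour given that the paper cites the entire statement. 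Second, a minor bookkeeping point: to land exactly at failure probability $e^{-n}$ after the union bound, you should take the eigenvalue event to fail with probability at most, say, $e^{-2n}$ (achievable by enlarging $\lambda$), since $e^{-n}+e^{-\Omega(n^{2})}$ is not literally at most $e^{-n}$.
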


The following theorem is then a direct consequence of Lemma~\ref{lem:nonneg_if_clique}, and the union bound applied to Lemma~\ref{lem:clique_random} and Lemma~\ref{lem:schrijver_random}. Recall the definition of the polynomial $p_{G,k}(x)$ from \eqref{eqn:pgkx}.

\begin{theorem}
Let $p \in (0,1)$ be fixed. Then, there exists a constant $\lambda > 0$ such that for large enough $n$ and for any $k$ satisfying $2 \log_{1/p} n \leq k \leq \frac{1}{2(\lambda+4)} \sqrt{\frac{np}{1-p}}$, we have
\begin{equation}
\hspace{1cm}
\textup{Pr} \Big(p_{G_{n,p},k}(x) \text{ is nonnegative but not sos} \Big) \geq 1 - e^{-n} - n \: \Bigg( \frac{e}{2 \log_{1/p} n} \Bigg)^{2 \log_{1/p} n}.
\label{eq:probability_not_sos}
\end{equation}
\label{thm:prob_random}
\end{theorem}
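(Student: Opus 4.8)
The plan is to combine the two probabilistic estimates via a union bound and then invoke Lemma~\ref{lem:nonneg_if_clique} to translate statements about $\omega$ and $\vartheta'$ into statements about $p_{G_{n,p},k}(x)$. First I would fix $p \in (0,1)$ and let $\lambda > 0$ be the constant supplied by Lemma~\ref{lem:schrijver_random}. For $n$ large enough that both lemmas apply (in particular $1 \leq 2\log_{1/p} n \leq n$), define the two events
$$
\mathcal{A} \mathrel{\mathop:}= \Big\{ \omega(G_{n,p}) < 2\log_{1/p} n \Big\}, \qquad
\mathcal{B} \mathrel{\mathop:}= \Bigg\{ \vartheta'(\overline{G_{n,p}}) > \frac{1}{2(\lambda+4)}\sqrt{\frac{np}{1-p}} \Bigg\}.
$$
Lemma~\ref{lem:clique_random} gives $\textup{Pr}(\mathcal{A}^c) \leq n\big(e/(2\log_{1/p} n)\big)^{2\log_{1/p} n}$ and Lemma~\ref{lem:schrijver_random} gives $\textup{Pr}(\mathcal{B}^c) \leq e^{-n}$, so by the union bound $\textup{Pr}(\mathcal{A} \cap \mathcal{B}) \geq 1 - e^{-n} - n\big(e/(2\log_{1/p} n)\big)^{2\log_{1/p} n}$, which is exactly the right-hand side of \eqref{eq:probability_not_sos}.

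Next I would argue that on the event $\mathcal{A} \cap \mathcal{B}$, for every $k$ in the stated range $2\log_{1/p} n \leq k \leq \frac{1}{2(\lambda+4)}\sqrt{np/(1-p)}$, the polynomial $p_{G_{n,p},k}(x)$ is nonnegative but not sos. For nonnegativity: on $\mathcal{A}$ we have $\omega(G_{n,p}) < 2\log_{1/p} n \leq k$, so in particular $k \geq \omega(G_{n,p})$, and Lemma~\ref{lem:nonneg_if_clique}\eqref{part_a} yields that $p_{G_{n,p},k}(x)$ is nonnegative. For failure of the sos property: on $\mathcal{B}$ we have $k \leq \frac{1}{2(\lambda+4)}\sqrt{np/(1-p)} < \vartheta'(\overline{G_{n,p}})$, so $k < \vartheta'(\overline{G_{n,p}})$, and Lemma~\ref{lem:nonneg_if_clique}\eqref{part_b} (in the contrapositive form: if $k < \vartheta'(\overline{G})$ then $p_{G,k}(x)$ is not sos) yields that $p_{G_{n,p},k}(x)$ is not sos. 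Combining, on $\mathcal{A}\cap\mathcal{B}$ the polynomial is nonnegative but not sos for all admissible $k$, hence $\textup{Pr}(p_{G_{n,p},k}(x) \text{ is nonnegative but not sos}) \geq \textup{Pr}(\mathcal{A}\cap\mathcal{B})$, completing the proof.

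There is essentially no serious obstacle here, since all the analytic work has been packaged into the earlier lemmas; the only points requiring minor care are (i) checking that the range of $k$ is nonempty for large $n$ — this holds because $2\log_{1/p} n = o(\sqrt{n})$, so for $n$ large the lower endpoint is below the upper endpoint — and (ii) making sure the inequalities are chained in the correct (strict versus non-strict) direction so that the equivalences of Lemma~\ref{lem:nonneg_if_clique} apply as stated. One should also note that the bound is uniform over $k$ in the interval: the single event $\mathcal{A}\cap\mathcal{B}$ simultaneously certifies the conclusion for every such $k$, so no additional union bound over $k$ is needed.
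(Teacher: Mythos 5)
Your proof is correct and follows exactly the route the paper takes: the paper's proof is precisely the union bound applied to Lemma~\ref{lem:clique_random} and Lemma~\ref{lem:schrijver_random}, followed by Lemma~\ref{lem:nonneg_if_clique} to convert $k \geq \omega(G_{n,p})$ into nonnegativity and $k < \vartheta'(\overline{G_{n,p}})$ into failure of the sos property. Your additional remarks on the nonemptiness of the range of $k$ and the uniformity over $k$ are accurate but not needed beyond what the paper states.
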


Notice that the range of allowed values for $k$ in Theorem~\ref{thm:prob_random} gets larger as $n$ increases, and that the right hand side in~\eqref{eq:probability_not_sos} tends to 1 as $n$ tends to infinity. Note also that for any $n,p,k$, the probability that the (nonnegative) polynomial $p_{G_{n,p}}(x)$ is not sos is greater than or equal to the probability that the polynomial $p_{G_{n,p},k}(x)$ is nonnegative but not sos. Therefore, for a fixed constant $p \in (0,1)$ and for large enough $n$, the polynomial $p_{G_{n,p}}(x)$ is nonnegative but not sos with high probability. In some sense, Theorem~\ref{thm:prob_random} can be considered as a discrete confirmation of a result of Blekherman~\cite{Blek}, which implies that when the degree is even and at least four, there are ``significantly more'' nonnegative polynomials than sos polynomials as the number of variables tends to infinity.

\subsubsection{Computational experiments on random graphs} \label{subsubsec:comput_exper}

For a fixed constant $p \in (0,1)$, although Theorem~\ref{thm:prob_random} might require $n$ to be large in order to obtain random graphs $G_{n,p}$ with $p_{G_{n,p}}(x)$ not sos, we observe computationally that this phenomenon occurs for relatively small values of $n$. To demonstrate this, for each value of $n$ and $p$ given in Table \ref{tab:nonneg_but_not_sos}, we generate 100 random graphs on $n$ vertices and with edge probability $p$. We report a lower bound on the number of times the (nonnegative) polynomial $p_{G_{n,p}}(x)$ is not sos, which by Corollary~\ref{cor:pgx_nonnegative_sos} holds if and only if $\omega(G_{n,p}) < \vartheta'(\overline{G_{n,p}})$. This lower bound is obtained by counting the number of times $\vartheta'(\overline{G_{n,p}})$ is not an integer, which implies $\omega(G_{n,p}) < \vartheta'(\overline{G_{n,p}})$ as $\omega(G_{n,p})$ is an integer. We observe that when $n \geq 150$, the (nonnegative) polynomial $p_{G_{n,p}}(x)$ is almost never sos.
\begin{table}[h]
\centering
\begin{tabular}{|| c || c | c | c | c | c | c | c ||} 
 \hline
 & $n = 25$ & $n = 50$ & $n = 75$ & $n = 100$ & $n = 125$ & $n = 150$ & $n = 175$ \\ [0.25ex] 
 \hline
 \hline
$p=0.1$ & 14 & 68 & 43 & 12 & 17 & 93 & 97  \\  [0.5ex] 
 \hline
$p=0.3$ & 46 & 65 & 66 & 81 & 100 & 100 & 100  \\ [0.5ex] 
 \hline
$p=0.5$ & 44 & 78 & 96 & 100 & 100 & 100 & 100  \\ [0.5ex] 
 \hline
$p=0.7$ & 45 & 84 & 97 & 99 & 100 & 100 & 100  \\ [0.5ex] 
 \hline
$p=0.9$ & 11 & 71 & 98 & 99 & 100 & 100 & 100  \\ [0.5ex] 
 \hline
\end{tabular}
\vspace{-0.1cm}
\caption{A lower bound on the number of times that 100 randomly generated graphs $G_{n,p}$ satisfy $\omega(G_{n,p}) < \vartheta'(\overline{G_{n,p}})$ (or equivalently make the (nonnegative) polynomial $p_{G_{n,p}}(x)$ not sos)}
\label{tab:nonneg_but_not_sos}
\end{table}

We remark that when the value $\vartheta'(\overline{G})$ is not an integer, for any $k \in \R$ with 
$$\lfloor \vartheta'(\overline{G}) \rfloor \leq k < \vartheta'(\overline{G}),$$
the polynomial $p_{G,k}(x)$ is nonnegative but not sos. Thus, this method suggests a very simple and efficient way of generating random nonnegative polynomials of degree 4 that are not sos.

\subsection{Separating hyperplanes} \label{subsec:separating_hyper}

Let $\Sigma_{n,d}$ denote the set of sos polynomials of degree $d$ in $n$ variables. Since $\Sigma_{n,d}$ is a closed convex set, one can always show that a polynomial $p(x)$ of degree $d$ in $n$ variables does not belong to $\Sigma_{n,d}$ by presenting a hyperplane that separates $p(x)$ from $\Sigma_{n,d}$. For a graph $G$ with $p_G(x) \notin \Sigma_{n,4}$, the following theorem makes this hyperplane explicit and gives a geometric interpretation to an optimal solution of the semidefinite program in \eqref{eqn:sch_number_complement}.


\begin{theorem}
Let $G=(V, E)$ be a graph on $n$ vertices with $p_G(x) \notin \Sigma_{n,4}$. An optimal solution $X \in S_n$ to \eqref{eqn:sch_number_complement} provides a hyperplane that separates $p_G(x)$ from $\Sigma_{n,4}$.
\end{theorem}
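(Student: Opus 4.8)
The plan is to produce an explicit linear functional $\ell$ on the space of (homogeneous) quartic polynomials in $x=(x_1,\dots,x_n)$ that is nonnegative on $\Sigma_{n,4}$ and strictly negative at $p_G(x)$; the hyperplane $\{q : \ell(q)=0\}$ will then separate $p_G(x)$ from $\Sigma_{n,4}$. Let $X\in S_n$ be an optimal solution of \eqref{eqn:sch_number_complement}, and recall from the proof of Lemma~\ref{lem:nonneg_if_clique} that $p_G(x)=p_M(x)=\sum_{i,j=1}^n M_{ij}x_i^2x_j^2$ with $M\mathrel{\mathop:}=\omega(G)(I+\overline{A})-J$. I would define $\ell$ on monomials by $\ell(x_i^2x_j^2)\mathrel{\mathop:}= X_{ij}$ for all $i,j$ (so in particular $\ell(x_i^4)=X_{ii}$) and $\ell(\mu)\mathrel{\mathop:}= 0$ for every other degree-$4$ monomial $\mu$, and then extend linearly; if $\Sigma_{n,4}$ is meant to contain non-homogeneous polynomials, I would additionally set $\ell$ to zero on all monomials of degree at most $3$.

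First I would check that $\ell(p_G(x))<0$. Since $p_G(x)\notin\Sigma_{n,4}$, Corollary~\ref{cor:pgx_nonnegative_sos}\eqref{part_bb} together with \eqref{eqn:sandwich} gives $\omega(G)<\vartheta'(\overline{G})$. Expanding $p_G$ in monomials and using that $M$ and $X$ are symmetric yields $\ell(p_G(x))=\sum_{i,j=1}^n M_{ij}X_{ij}=\text{Tr}(MX)$. Now $\text{Tr}\big((I+\overline{A})X\big)=\text{Tr}(X)+\text{Tr}(\overline{A}X)=1$, because $\text{Tr}(X)=1$ and the nonzero entries of $\overline{A}$ occur exactly at pairs of distinct vertices non-adjacent in $G$, where $X$ vanishes by feasibility; moreover $\text{Tr}(JX)=\vartheta'(\overline{G})$ by optimality of $X$. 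Hence $\ell(p_G(x))=\omega(G)-\vartheta'(\overline{G})<0$. Note that optimality of $X$, not merely feasibility, is what is used here.

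Next I would check that $\ell(\sigma)\geq 0$ for every $\sigma\in\Sigma_{n,4}$. Every such $\sigma$ is a sum of squares of quadratic forms $q$, and $\ell(q^2)=c^T\Lambda c$, where $c$ is the coefficient vector of $q$ and $\Lambda$ is the moment matrix of $\ell$, i.e.\ the symmetric matrix indexed by the degree-$2$ monomials $\{x_i^2 : 1\le i\le n\}\cup\{x_ix_j : 1\le i<j\le n\}$ with $\Lambda_{\mu,\nu}=\ell(\mu\nu)$; so it suffices to show $\Lambda\succeq 0$. By construction, $\Lambda_{\mu,\nu}=0$ unless $\mu=x_i^2$ and $\nu=x_j^2$ (giving $X_{ij}$) or $\mu=\nu=x_ix_j$ (giving $X_{ij}$). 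Hence $\Lambda$ is block-diagonal, with one block equal to $X$ and the other equal to the diagonal matrix whose entries are $X_{ij}$ for $i<j$. This matrix is positive semidefinite precisely because $X\succeq 0$ and $X\geq 0$, both of which hold since $X$ is feasible for \eqref{eqn:sch_number_complement}. Combined with the previous paragraph, $\ell$ is the desired separating functional.

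The one delicate step is establishing $\Lambda\succeq 0$, and there the entrywise nonnegativity $X\geq 0$ is essential: without it the diagonal block $\text{diag}(X_{ij})_{i<j}$ need not be psd, and $\ell$ could fail to be nonnegative on $\Sigma_{n,4}$. This is exactly why the theorem invokes an optimal solution of the Schrijver-type program \eqref{eqn:sch_number_complement} computing $\vartheta'(\overline{G})$ rather than of the Lov\'asz program \eqref{eqn:lov_number} computing $\vartheta(\overline{G})$, mirroring the fact --- noted in the remark following Corollary~\ref{cor:pgx_nonnegative_sos} --- that $\vartheta'(\overline{G})$, and not $\vartheta(\overline{G})$, is the quantity governing whether $p_G(x)$ is sos.
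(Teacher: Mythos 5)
Your proof is correct, and its first half coincides with the paper's: both take the optimal $X$ of \eqref{eqn:sch_number_complement}, pair it with the coefficient matrix of $p_G(x)$ in the monomials $x_i^2x_j^2$, and compute $\ell(p_G)=\text{Tr}(MX)=\omega(G)\,\text{Tr}\big((I+\overline{A})X\big)-\text{Tr}(JX)=\omega(G)-\vartheta'(\overline{G})<0$, using feasibility for the first trace and optimality for the second, together with Corollary~\ref{cor:pgx_nonnegative_sos}\eqref{part_bb}. Where you genuinely diverge is in the nonnegativity of the functional on $\Sigma_{n,4}$. The paper restricts attention to sos polynomials supported on the monomials $x_i^2x_j^2$ and invokes the Parrilo/Choi--Lam structure theorem $\mathcal{K}_n=S_n^++N_n$: the coefficient matrix of such a polynomial is $P+N$ with $P\succeq 0$ and $N\geq 0$, so $\langle X,M^q\rangle=\langle X,P\rangle+\langle X,N\rangle\geq 0$. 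You instead extend $\ell$ by zero to all other monomials and verify directly that its moment matrix is block diagonal, with one block equal to $X$ (indexed by the $x_i^2$) and the other equal to $\text{diag}(X_{ij})_{i<j}$ (indexed by the $x_ix_j$), both psd because $X\succeq 0$ and $X\geq 0$; your bookkeeping of which products $\mu\nu$ land on a monomial of the form $x_u^2x_v^2$ is accurate. This route buys two things: it is self-contained (no appeal to the structure theorem), and it certifies nonnegativity on all of $\Sigma_{n,4}$ rather than only on its even-supported part --- the paper's argument, to yield a hyperplane in the full space of quartics, implicitly needs exactly the zero extension you make explicit (or, equivalently, the observation that averaging over the sign flips $x_i\mapsto\pm x_i$ maps $\Sigma_{n,4}$ into the even-supported sos polynomials). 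Your closing remark on why the entrywise constraint $X\geq 0$ is indispensable, and hence why \eqref{eqn:sch_number_complement} rather than \eqref{eqn:lov_number} must be used, is also correct and matches the spirit of the remark following Corollary~\ref{cor:pgx_nonnegative_sos}.
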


\begin{proof}
Let us recall the polynomial $p_G(x)$:
$$p_G(x) = -2 \: \omega(G) \mathlarger{\sum}\limits_{ij \in E(G)} x_i^2x_j^2 + (\omega(G) - 1) \left( \mathlarger{\sum}\limits_{i=1}^{n} x_i^2 \right)^2.$$
Notice that the polynomial $p_G(x)$ consists of monomials $x_i^2x_j^2$ for $i,j=1,\dots,n$. Let $V \mathrel{\mathop:}= V(x)$ denote the $n \times n$ symmetric matrix that consists of these monomials where $V_{ij} = x_i^2x_j^2$ for $i,j=1,\dots,n$. Let the matrix of coefficients of $p_G(x)$ in the monomial ordering $V$ be denoted by $M^p \in S_n$, i.e., $p_G(x) = \langle V, M^p \rangle$, where $\langle \cdot, \cdot \rangle$ denotes the standard matrix inner product\footnote{Recall that for two matrices $A, B \in S_n$, we have $\langle A, B \rangle = \text{Tr}(AB) = \sum\limits_{i,j=1}^n A_{ij} B_{ij}$.}. We claim that an optimal solution $X \in S_n$ to~\eqref{eqn:sch_number_complement} satisfies $\langle X, M^p \rangle < 0$ and $\langle X, M^q \rangle \geq 0$ for any sos polynomial $q(x)$ consisting of the monomials in $V$. Here, $M^q \in S_n$ denotes the coefficients of $q(x)$ listed according to the ordering in $V$. Observe that
\[ \hspace{4cm}
M_{ij}^p = \begin{cases} 
      \omega(G)-1 & \text{if} \hspace{0.2cm} i = j, \\
      -1 & \text{if} \hspace{0.2cm} ij \in E(G), \\
      \omega(G)-1 & \text{if} \hspace{0.2cm} ij \notin E(G).
   \end{cases}
\]
Let $X$ be an optimal solution to \eqref{eqn:sch_number_complement}. Then, we have $$X \geq 0, \quad X \succeq 0, \quad \text{Tr}(X)=1, \quad X_{ij} = 0 \text{ for } ij \notin E(G), \quad 2 \sum\limits_{ij \in E(G)} X_{ij} = \vartheta'(\overline{G}) -1,$$
where the last equality follows since
$$\vartheta'(\overline{G}) = \text{Tr}(JX) = \sum\limits_{i,j=1}^n X_{ij} = 2 \sum\limits_{ij \in E(G)} X_{ij} + 2 \sum\limits_{ij \notin E(G)} X_{ij} + \sum\limits_{i=1}^n X_{ii} = 2 \sum\limits_{ij \in E(G)} X_{ij} + 1.$$
Now, observe that
$$\langle X, M^p \rangle = (\omega(G)-1) \text{Tr}(X) - 2 \sum\limits_{ij \in E(G)} X_{ij} = (\omega(G) - 1) - (\vartheta'(\overline{G}) -1) = \omega(G) - \vartheta'(\overline{G}).$$
Since $p_G(x)$ is not sos, by Corollary~\ref{cor:pgx_nonnegative_sos}\eqref{part_bb}, we have $\omega(G) < \vartheta'(\overline{G})$, and therefore $\langle X, M^p \rangle < 0$. 

Consider now an arbitrary sos polynomial $q(x)$ consisting of the monomials in $V$. By~\cite[Section~5]{ParriloThesis} (see also \cite[Lemma 3.5]{ChoiLam1}), we must have $M^q = P + N$, where $P \succeq 0$ and $N \geq 0$. Since $X \succeq 0$ and $X \geq 0$, it follows that $\langle X, M^q \rangle = \langle X, P \rangle + \langle X, N \rangle \geq 0$.
\end{proof}

\subsection{Convexity of the polynomial $p_{G,k}(x)$} \label{subsec:convexity}

In addition to nonnegativity, convexity\footnote{Recall that a polynomial $p(x)$ in $n$ variables is convex if and only if its Hessian $H(x)$, i.e., the $n \times n$ symmetric matrix of its second derivatives, is psd for all $x$.} is another fundamental property of polynomials. The study of the relationship between the set of convex polynomials and the set of sos polynomials is a subject of active research. In~\cite{Blekher}, Blekherman showed that there are convex forms\footnote{A convex form is nonnegative since it vanishes together with its gradient at the origin.} that are not sos, although the problem of constructing an explicit example remained open until recently. In~\cite{Saunderson}, Saunderson provided the first such example. Saunderson's form is of degree 4 and has 272 variables and it is known that such an example does not exist among forms of degree 4 in less than 5 variables~\cite{Bachir}. Given that we have shown in Sections \ref{subsec:graphs}, \ref{subsec:graph_operations}, \ref{subsec:random_graphs} how various graphs $G$ can lead to degree-4 nonnegative forms $p_G(x)$ that are not sos, it is natural to ask whether it is possible to construct \emph{convex} forms that are not sos through the polynomial $p_G(x)$, or more generally $p_{G,k}(x)$. In this section, we show that these polynomials are unfortunately always ``closer'' to being sos than to being convex.

Recall the definitions of the quartic forms $p_G(x)$ and $p_{G,k}(x)$ from \eqref{eqn:our_polynomial} and \eqref{eqn:pgkx}. It is easy to see that if $G$ is a graph with no edge, then $p_{G,k}(x) = (k - 1) \Bigg( \mathlarger{\sum}\limits_{i=1}^{|V(G)|} x_i^2 \Bigg)^2$, which is convex if and only if $k \geq 1$. It turns out that this is the only case where $p_{G,k}(x)$ is convex.

\begin{lemma}
Let $G$ be a graph with at least one edge. Then, the polynomial $p_{G,k}(x)$ is not convex for any $k$.
\label{lem:not_convex}
\end{lemma}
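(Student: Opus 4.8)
The plan is to exhibit a one-dimensional restriction of $p_{G,k}(x)$ that fails to be convex, and then invoke the elementary fact that a convex function stays convex when restricted to an affine subspace (here, an affine line).

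Since $G$ has at least one edge, after relabeling the vertices we may assume $12 \in E(G)$. Write $n \mathrel{\mathop:}= |V(G)|$ and consider the univariate polynomial obtained by restricting $p_{G,k}$ to the line $t \mapsto (1, t, 0, \dots, 0)$, namely
$$h(t) \mathrel{\mathop:}= p_{G,k}(1, t, 0, \dots, 0).$$
In the sum $\sum_{ij \in E(G)} x_i^2 x_j^2$, every edge other than $12$ has an endpoint in $\{3, \dots, n\}$ and hence contributes $0$ after this substitution, while the edge $12$ contributes $t^2$; the term $\left( \sum_{i=1}^n x_i^2 \right)^2$ becomes $(1 + t^2)^2$. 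Therefore
$$h(t) = -2k\, t^2 + (k-1)(1 + t^2)^2 = (k-1) - 2\, t^2 + (k-1)\, t^4.$$

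The key observation is that the coefficient of $t^2$ in $h(t)$ equals $-2$ \emph{independently of $k$}: the contribution $-2k$ from the edge term and the contribution $2(k-1)$ from expanding $(1+t^2)^2$ always combine to $-2$. Consequently $h''(0) = -4 < 0$, so $h$ is not convex. If $p_{G,k}$ were convex, then its restriction to the affine line $t \mapsto (1, t, 0, \dots, 0)$ would be convex as well, which is a contradiction. Hence $p_{G,k}(x)$ is not convex for any $k \in \R$.

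There is essentially no technical obstacle here; the only ingredient that requires a moment's thought is the choice of the right restriction — picking the plane spanned by two adjacent vertices, so that exactly one ``attractive'' cross-term $-2k\, x_1^2 x_2^2$ from the edge sum survives against the ``repulsive'' contribution $+2(k-1)\, x_1^2 x_2^2$ coming from $\left(\sum_i x_i^2\right)^2$, leaving a $k$-independent negative cross-term that forces the Hessian to be indefinite. Equivalently, one could compute the full Hessian of $p_{G,k}$ and evaluate it at the standard basis vector $e_1$, where the $(2,2)$ entry is $-4$; the restriction argument above just packages this computation compactly.
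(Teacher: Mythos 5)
Your proof is correct and is essentially the paper's argument in a different packaging: the paper establishes the more general Lemma~5.4 (about $p_{G,k,\gamma}$) by computing the diagonal Hessian entry $H_{1,1}$ at the standard basis vector of the neighboring vertex and finding it equals $4(\gamma-1)$, which specializes to $-4$ at $\gamma=0$; your univariate restriction $h(t)=p_{G,k}(1,t,0,\dots,0)$ with $h''(0)=-4$ is exactly that same computation, as you yourself note at the end.
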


It follows that the polynomial $p_{G,k}(x)$ is convex if and only if $G$ has no edge and $k \geq 1$ (in which case $p_{G,k}(x)$ is clearly sos). In particular, the polynomial $p_G(x)$ is convex if and only if $G$ has no edge (in which case $p_G(x) = 0$). 

Instead of proving Lemma \ref{lem:not_convex}, we prove a more general statement in Lemma \ref{lem:more_general_convex}. Even though the polynomial $p_{G,k}(x)$ does not directly lead to a convex form that is not sos, one might still hope to obtain such a form by considering the following family of polynomials:
$$p_{G,k,\gamma}(x) \mathrel{\mathop:}= p_{G,k}(x) + \gamma \left( \mathlarger{\sum}\limits_{i=1}^{|V(G)|} x_i^2 \right)^2.$$
Indeed, starting with a pair $(G, k)$ for which the nonnegative polynomial $p_{G,k}(x)$ is not sos, by increasing the value of the scalar $\gamma$, one might hope that the polynomial $p_{G,k,\gamma}(x)$ becomes convex before it becomes sos.\footnote{This approach is related to the comparison between two conic programming-based lower bounds on the minimum value of $p_{G,k}(x)$ on the unit sphere that can be obtained by inner approximating the cone of nonnegative forms with the cones of convex forms and sos forms respectively.} The next lemma shows that this can never happen.

\begin{lemma}
Let $G$ be a graph with at least one edge. If the polynomial $p_{G,k,\gamma}(x)$ is convex for some $k$, then $\gamma \geq 1$ (in which case the polynomial $p_{G,k,\gamma}(x)$ is sos).
\label{lem:more_general_convex}
\end{lemma}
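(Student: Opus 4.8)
The plan is to compute the Hessian of $p_{G,k,\gamma}(x)$ and evaluate it at a cleverly chosen point to force $\gamma \geq 1$ whenever $G$ has an edge. Write $s(x) := \sum_{i=1}^n x_i^2$, so that $p_{G,k,\gamma}(x) = -2k\sum_{ij\in E}x_i^2x_j^2 + (k-1+\gamma)\,s(x)^2$. Since the map $\gamma \mapsto p_{G,k,\gamma}$ only shifts the coefficient of $s(x)^2$, it is natural to first record the Hessian of each building block: the Hessian of $s(x)^2$ is $4s(x) I + 8 x x^T$, which is psd everywhere; the Hessian of a single term $x_i^2 x_j^2$ (for $i\neq j$, $ij\in E$) contributes, in the $2\times 2$ block indexed by $\{i,j\}$, the matrix $\begin{pmatrix} 2x_j^2 & 4x_ix_j \\ 4x_ix_j & 2x_i^2\end{pmatrix}$, and $0$ elsewhere. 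So the Hessian $H(x)$ of $p_{G,k,\gamma}$ is $4(k-1+\gamma)s(x)I + 8(k-1+\gamma)xx^T - 2k\sum_{ij\in E}(\text{those blocks})$.

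The key step is the choice of evaluation point. Fix an edge $ab \in E(G)$ and set $x_a = 1$, $x_b = t$ for a parameter $t$ to be tuned, and $x_i = 0$ for all other $i$. Then $s(x) = 1+t^2$, and at this point only the edge $ab$ contributes to the edge-sum in $H(x)$ (any other edge $ij\in E$ has $x_i = x_j = 0$ in at least one coordinate, except $ab$ itself — one must check edges incident to $a$ or $b$: an edge $aj$ with $j\neq b$ has $x_j=0$, contributing a block $\begin{pmatrix}2x_j^2 & 4x_ax_j\\ 4x_ax_j & 2x_a^2\end{pmatrix} = \begin{pmatrix}0&0\\0&2\end{pmatrix}$, which is psd, so such terms only help; similarly for edges at $b$). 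The plan is then to restrict $H(x)$ to the two-dimensional subspace spanned by $e_a$ and $e_b$ and extract the $2\times 2$ principal submatrix on rows/columns $\{a,b\}$; convexity of $p_{G,k,\gamma}$ requires this submatrix to be psd. A direct computation gives this submatrix as
\[
4(k-1+\gamma)(1+t^2)I_2 + 8(k-1+\gamma)\begin{pmatrix}1 & t\\ t & t^2\end{pmatrix} - 2k\begin{pmatrix}2t^2 & 4t \\ 4t & 2\end{pmatrix},
\]
possibly plus psd contributions from edges incident to $a$ or $b$ other than $ab$; I would handle those by noting they are psd and hence can only weaken the obstruction, or by choosing to look at the quadratic form $v^T H(x) v$ for the specific vector $v = e_a - e_b$ (or $v = e_b$) which annihilates those extra blocks. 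Testing $v^T H(x) v \ge 0$ for a well-chosen $v$ and then optimizing over $t$ (and using that $k$ is free but the inequality must hold for \emph{all} $x$, in particular this one) should collapse to a one-variable inequality in $t$ whose feasibility across all $t$ forces $\gamma \ge 1$.

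The main obstacle I anticipate is bookkeeping: making sure that the contributions of edges incident to $a$ and $b$ (other than $ab$ itself) are correctly accounted for, and choosing the test vector $v$ and the scalar $t$ so that the $k$-dependence is either eliminated or exploited — we get to pick $x$ but $k$ is whatever it is, so the argument must produce $\gamma\ge 1$ regardless of $k$. I expect the cleanest route is to evaluate $v^T H(x) v$ with $v = e_a$ at the point above, get an expression affine in $k$ and $\gamma$ with $t$-dependent coefficients, then choose $t$ to kill the $k$-coefficient (forcing something like $t^2 = $ a specific value), leaving a bound that reads $\gamma \ge 1$ after simplification; alternatively, taking $t\to\infty$ or $t\to 0$ in a suitably normalized combination of two such inequalities isolates $\gamma$. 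Once $\gamma \ge 1$ is established, the parenthetical claim that $p_{G,k,\gamma}$ is then sos follows because $p_{G,k,\gamma}(x) = p_{G,k-1+\gamma}\big|_{\text{shifted}}$... more precisely, one observes $p_{G,k,\gamma}(x) = p_{G,k+\gamma}(x)$ in the notation of \eqref{eqn:pgkx} (the coefficient of $s(x)^2$ becomes $(k+\gamma)-1$ while the edge coefficient stays $-2k$) — hmm, that is not literally $p_{G,k+\gamma}$ since the edge term would need coefficient $-2(k+\gamma)$; instead note $p_{G,k,\gamma}(x) = p_{G,k}(x) + \gamma\, s(x)^2$ and when $\gamma \ge 1$ we may write it as $p_{G, k}(x) + s(x)^2 + (\gamma-1)s(x)^2$, and $p_{G,k}(x) + s(x)^2 = p_{G,k+1,0}(x)$... rather, the slick statement is that $p_{G,k,\gamma}(x)$ with $\gamma\ge1$ equals a nonnegative combination involving $(\sum x_i^2)^2$ minus $2k\sum_{ij\in E}x_i^2x_j^2$ plus $k(\sum x_i^2)^2$, and one checks $k\,s(x)^2 - 2k\sum_{ij\in E}x_i^2x_j^2 = k\sum_{ij\notin E, i\ne j} x_i^2 x_j^2 + k\sum_i x_i^4 \ge 0$ is visibly sos, so adding the remaining $(\gamma-1)s(x)^2 \ge 0$ and the leftover $(k-1)s(x)^2$ — here one needs $k \ge 1$, which is forced since $k \ge \omega(G) \ge 2$ for a graph with an edge by Lemma~\ref{lem:nonneg_if_clique}\eqref{part_a} applied to the hypothesis that $p_{G,k,\gamma}$ is convex hence nonnegative — shows sos-ness. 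I would streamline this last part once the Hessian computation is in place.
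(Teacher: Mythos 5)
Your plan is essentially the paper's proof: both arguments evaluate a diagonal entry of the Hessian at a point supported on a neighbour of an endpoint of an edge. The paper computes, for an edge $12\in E(G)$, that $H_{1,1}(x)=12(k-1+\gamma)x_1^2+4k\sum_{1i\notin E}x_i^2+4(\gamma-1)\sum_{i=2}^{n}x_i^2$ and evaluates at $(0,1,0,\dots,0)$ to get $4(\gamma-1)$; this is exactly your $v=e_a$ test after letting $t\to\infty$, since your $2\times 2$ block computation yields $e_a^{T}H(e_a+te_b)\,e_a=12(k-1+\gamma)+4(\gamma-1)t^2$. The bookkeeping you were worried about is in fact vacuous: at the point $e_a+te_b$, every edge other than $ab$ contributes zero to the rows and columns indexed by $a$ and $b$. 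The sos decomposition you eventually land on, $p_{G,k,\gamma}(x)=k\sum_i x_i^4+2k\sum_{ij\notin E}x_i^2x_j^2+(\gamma-1)\bigl(\sum_i x_i^2\bigr)^2$, is also the paper's. The one genuinely incorrect step is your justification that $k\ge 0$: Lemma~\ref{lem:nonneg_if_clique}\eqref{part_a} concerns $p_{G,k}$, and nonnegativity of $p_{G,k,\gamma}=p_{G,k}+\gamma\bigl(\sum_i x_i^2\bigr)^2$ does not imply nonnegativity of $p_{G,k}$, so you cannot conclude $k\ge\omega(G)$; for instance $G=K_2$, $k=-1$, $\gamma=10$ gives a convex (hence nonnegative) $p_{G,k,\gamma}$ with $k<\omega(G)$. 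The fix is immediate: nonnegativity at $e_1$ forces $k-1+\gamma\ge 0$, and if $k\le 0$ then $p_{G,k,\gamma}=(-2k)\sum_{ij\in E}(x_ix_j)^2+(k-1+\gamma)\bigl(\sum_i x_i^2\bigr)^2$ is already visibly sos, while for $k\ge 0$ your decomposition applies. (The paper's own proof also leaves the sign of $k$ implicit at this step.)
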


\begin{proof}
Let $G = (V,E)$ be a graph with at least one edge and with vertex set $V = \{1, \dots, n\}$. We may assume without of loss of generality that vertex $1$ is adjacent to vertex $2$. Let $H(x)$ denote the Hessian of the polynomial $p_{G,k, \gamma}(x)$. We have
$$H_{1,1}(x) = 12(k-1+\gamma)x_1^2 + 4k \mathlarger{\sum}\limits_{1i \notin E} x_i^2 + 4(\gamma -1) \mathlarger{\sum}\limits_{i=2}^n x_i^2.$$
Observe that $H_{1,1}(0,1,0,\dots,0) = 4(\gamma -1)$. Hence, regardless the value of $k$, the Hessian $H(x)$ cannot not globally psd when $\gamma < 1$. This proves that if $p_{G,k,\gamma}(x)$ is convex, then $\gamma \geq 1$.

Observe also that $p_{G,k,1}(x) = k \mathlarger{\sum}\limits_{i=1}^{n} x_i^4 + 2k \mathlarger{\sum}\limits_{ij \notin E} x_i^2x_j^2$. Therefore, $p_{G,k,\gamma}(x)$ is sos for $\gamma \geq 1$.
\end{proof}

\newpage
\section{Subsets of Sum of Squares Polynomials and Their Graph-Theoretic Interpretations} \label{sec:smaller_subsets_of_Cn}

As observed in the proof of Lemma~\ref{lem:nonneg_if_clique}, the following formulations of $\omega(G)$ and $\vartheta'(\overline{G})$ can be obtained through nonnegativity and sum of squares conditions on the polynomial $p_{G,k}(x)$ defined in \eqref{eqn:pgkx}:
\vspace{-0.7cm}
\begin{multicols}{2}
\begin{equation}
\begin{aligned}
\omega(G) \:\: = \:\:\:\:
& \min\limits_{k \in \R}
& & k \\
& \text{s.t.}
&& p_{G,k}(x) \text{ is nonnegative,}
\end{aligned}
\label{eq:omega_nonnegative}
\end{equation}
\hspace{0.5cm}
\begin{equation}
\begin{aligned}
\vartheta'(\overline{G}) \:\: = \:\:\:\:
& \min\limits_{k \in \R}
& & k \\
& \text{s.t.}
&& p_{G,k}(x) \text{ is sos.}
\end{aligned}
\label{eq:theta_sos2}
\end{equation}
\end{multicols}

\noindent It is therefore natural to wonder what graph parameters some specific subsets of sos polynomials would lead to. In this section, we consider certain well-studied subsets of sos polynomials and examine the bounds that optimization over these subsets produces on the clique number of a graph. We then characterize the graphs for which these bounds are tight for all induced subgraphs.

\subsection{Dsos and sdsos polynomials}\label{subsec:dsos_sdsos}

Recall that a polynomial $p(x)$ in $n$ variables and of degree $2d$ is sos if and only if there exists a matrix $Q \in S_n^+$ such that $p(x) = z(x)^TQz(x)$, where $z(x)$ is the vector of monomials of degree up to $d$~\cite{ChoiLamRez,ParriloThesis}. In~\cite{AhmMaj}, the condition that the matrix $Q$ be psd is replaced with stronger conditions for the purpose of obtaining subsets of sos polynomials that one can optimize over more efficiently. Two such conditions arise from the notions of diagonally dominant and scaled diagonally dominant matrices.  A matrix $A \in S_n$ is
\begin{itemize}
\itemsep0em
\item \emph{diagonally dominant} (dd) if $A_{ii} \geq \sum_{j \neq i} |A_{ij}|$ for every $i =1,\dots,n$,
\item \emph{scaled diagonally dominant} (sdd) if there exists a diagonal matrix $D$, with positive diagonal entries, such that $DAD$ is diagonally dominant. 
\end{itemize}
We refer to the set of $n \times n$ dd (resp. sdd) matrices as $DD_n$ (resp. $SDD_n$). By Gershgorin's circle theorem \cite{Gersh}, we have $DD_n \subseteq SDD_n \subseteq S_n^+$. We say that a polynomial $p(x)$ is \emph{diagonally dominant sum of squares} (dsos) (resp. \emph{scaled diagonally dominant sum of squares} (sdsos)) if there exists a dd (resp. sdd) matrix $Q$ such that $p(x) = z(x)^TQz(x)$.
It turns out that one can optimize a linear function over the set of dsos (resp. sdsos) polynomials intersected with an affine subspace using linear programming (resp. second-order cone programming)~\cite{AhmMaj}. Motivated in part by this fact, many researchers have studied these and related sets in recent years (see, e.g., \cite{SongPar, Gouveia1, Gouveia2, RVZ, Murray, KGNZ} and references therein). These sets also have natural interpretations in the polynomial language; for example ``sdsos'' polynomials are exactly sums of binomial squares, which were studied in the algebra community in early papers of Reznick~\cite{Reznick}, Choi, Lam, and Reznick \cite{CLR}, and Robinson~\cite{Robinson}.


Since dsos/sdsos polynomials form more tractable subsets of sos  polynomials, in view of~\eqref{eq:omega_nonnegative} and~\eqref{eq:theta_sos2}, it is natural to wonder the graph parameters that they produce. For a graph $G$, let us define the following parameters:
\vspace{-0.8cm}
\begin{multicols}{2}
\begin{equation}
\begin{aligned}
\tau(G) \: \mathrel{\mathop:}= \:\:
& \min\limits_{k \in \R}
& & k \\
& \text{s.t.}
&& p_{G,k}(x) \text{ is dsos,}
\end{aligned}
\label{eq:tau_dsos}
\end{equation}
\hspace{0.7cm}
\begin{equation}
\begin{aligned}
\gamma(G) \: \mathrel{\mathop:}= \:\:
& \min\limits_{k \in \R}
& & k \\
& \text{s.t.}
&& p_{G,k}(x) \text{ is sdsos.}
\end{aligned}
\label{eq:gamma_sdsos}
\end{equation}
\end{multicols}

\noindent In this subsection, we show that for any graph $G$, we have
\begin{itemize}
\itemsep0em
\item $\tau(G) = \Delta(G) + 1$, where $\Delta(G)$ is the maximum degree of $G$,
\item $\gamma(G) = \lambda_{\text{max}}(A) + 1$, where $\lambda_{\text{max}}(A)$ is the largest eigenvalue of the adjacency matrix $A$ of $G$.\footnote{In~\cite[Section 4.2]{AhmMaj}, two hierarchies of upper bounds on the clique number based on dsos (resp. sdsos) polynomials are proposed. Our results in this section concern the first level of these hierarchies.} 
\end{itemize}
Since dsos polynomials form a subset of sdsos polynomials, we have $\gamma(G) \leq \tau(G)$ for any graph $G$, and therefore we obtain the well-known inequality $\lambda_{\text{max}}(A) \leq \Delta(G)$ from spectral graph theory (see, e.g., Section 3 in \cite{Spectra}). Together with \eqref{eq:omega_nonnegative} and \eqref{eq:theta_sos2}, we have
$$\omega(G) \leq \vartheta'(\overline{G}) \leq \lambda_{\text{max}}(A) + 1 \leq \Delta(G) + 1.$$

\begin{remark}
Recall the formulations of $\omega(G)$ and $\vartheta'(\overline{G})$ given in \eqref{eqn:omega_copos} and \eqref{eqn:theta_sos}, respectively. As stated in Section \ref{sec:sos_perfect}, the set $\mathcal{C}_n$ (resp. $\mathcal{K}_n$) can equivalently be described as the set of matrices $M \in S_n$ for which the polynomial $p_M(x)$ given in \eqref{eqn:p_M_x} is nonnegative (resp. sos). Parrilo~\cite[Section 5]{ParriloThesis} observed (see also \cite[Lemma 3.5]{ChoiLam1}) that for a matrix $M \in S_n$, the polynomial $p_M(x)$ in \eqref{eqn:p_M_x} is sos if and only if $M \in S_n^+ + N_n$, i.e., that $\mathcal{K}_n = S_n^+ + N_n$. Following the proof of Parrilo and using~\cite[Theorem 3.4 and Theorem 3.6]{AhmMaj}, it is not difficult to verify that the polynomial $p_M(x)$ is sdsos (resp. dsos) if and only if $M \in SDD_n + N_n$ (resp. $M \in DD_n + N_n$). Hence, $\gamma(G)$ and $\tau(G)$ can equivalently be formulated as:
\vspace{-0.8cm}
\begin{multicols}{2}
\begin{equation}
\begin{aligned}
\tau(G) = \:
& \min\limits_{k \in \R}
& & k \\
& \text{s.t.}
&& k(I + \overline{A}) - J \in DD_n + N_n,
\end{aligned}
\label{eqn:tau_dd_n}
\end{equation}
\hspace{0.5cm}
\begin{equation}
\hspace{-0.8cm}
\begin{aligned}
\gamma(G) = \:
& \min\limits_{k \in \R}
& & k \\
& \text{s.t.}
&& k(I + \overline{A}) - J \in SDD_n + N_n.
\end{aligned}
\label{eqn:gamma_sdd_n}
\end{equation}
\end{multicols}
\end{remark}

\begin{theorem}
For any graph $G$, we have $\tau(G) = \Delta(G) + 1$.
\label{thm:dd_max_deg}
\end{theorem}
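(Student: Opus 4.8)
The plan is to compute $\tau(G)$ directly from the reformulation in \eqref{eqn:tau_dd_n}, namely $\tau(G) = \min\{k : k(I+\overline{A}) - J \in DD_n + N_n\}$, and show this minimum equals $\Delta(G)+1$. I would prove the two inequalities $\tau(G) \le \Delta(G)+1$ and $\tau(G) \ge \Delta(G)+1$ separately.

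For the upper bound, I would exhibit an explicit decomposition $k(I+\overline{A}) - J = D + N$ with $D \in DD_n$, $N \in N_n$ when $k = \Delta(G)+1$. Recall $k(I+\overline{A}) - J = -kA + (k-1)J$. A natural choice is to put all the off-diagonal structure into $N$: set $N$ to have zeros on the diagonal and entries $k-1$ off the diagonal wherever $ij \notin E$ (these are $\ge 0$ since $k \ge 1$), and handle the edge positions, which carry coefficient $-k-(-1)\cdot 0$... more carefully, the off-diagonal entry of $-kA+(k-1)J$ is $k-1$ for non-edges and $k-1-k = -1$ for edges. So $N$ cannot absorb the edge entries. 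Instead, put the $-1$ edge entries and the diagonal into $D$: let $D$ be the matrix with diagonal entries $k-1$ and off-diagonal entry $-1$ on edges of $G$, $0$ elsewhere, and let $N$ have off-diagonal entry $k-1$ on non-edges, $0$ on the diagonal and on edges. Then $D+N$ reproduces the target, $N \ge 0$, and $D$ is diagonally dominant precisely when $k - 1 \ge \deg_G(i)$ for every vertex $i$, i.e. $k \ge \Delta(G)+1$. This gives $\tau(G) \le \Delta(G)+1$.

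For the lower bound, suppose $k(I+\overline{A}) - J = D + N$ with $D \in DD_n$, $N \in N_n$. Let $v$ be a vertex of maximum degree $\Delta(G)$. I would look at row $v$ of the matrix identity. The diagonal entry satisfies $D_{vv} + N_{vv} = k-1$, and since $N_{vv} \ge 0$ we get $D_{vv} \le k-1$. For each edge $vw$, the off-diagonal entry is $D_{vw} + N_{vw} = -1$, and since $N_{vw} \ge 0$ we get $D_{vw} \le -1$, hence $|D_{vw}| \ge 1$. Diagonal dominance of $D$ gives $D_{vv} \ge \sum_{u \ne v} |D_{vu}| \ge \sum_{w : vw \in E} |D_{vw}| \ge \deg_G(v) = \Delta(G)$. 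Combining, $\Delta(G) \le D_{vv} \le k-1$, so $k \ge \Delta(G)+1$. Taking the minimum over feasible $k$ yields $\tau(G) \ge \Delta(G)+1$.

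I do not anticipate a serious obstacle here; the argument is elementary once one has the equivalence $p_{G,k}$ dsos $\iff k(I+\overline{A})-J \in DD_n + N_n$ from the Remark preceding the theorem. The only point requiring a little care is making sure the proposed decomposition in the upper bound has all the signs right (in particular that $N \ge 0$ uses $k-1 \ge 0$, which holds since $\Delta(G)+1 \ge 2$ as $G$ has at least one edge when $\Delta(G) \ge 1$; if $G$ has no edge then $\Delta(G) = 0$ and $\tau(G) = 1$ is immediate since $p_{G,1}$ is already dsos). I would also remark that the lower-bound argument only used diagonal dominance in one row, and in fact shows the bound is attained, so the characterization is exact.
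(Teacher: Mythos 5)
Your proof is correct and follows essentially the same route as the paper: the identical explicit decomposition $D+N$ (with $D$ carrying the diagonal and the $-1$ edge entries, $N$ the non-edge entries) for the upper bound, and the same diagonal-dominance row argument for the lower bound, with your version being slightly more explicit about the step $D_{vv}\le k-1$. No gaps.
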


\begin{proof}
Let $G$ be a graph on $n$ vertices and let $\overline{A}$ denote the adjacency matrix of $\overline{G}$. Let $k \in \R$ be feasible to \eqref{eqn:tau_dd_n}. Then, we have $k(I + \overline{A}) - J = D + N$, where $D \in DD_n$ and $N \in N_n$. Note that the matrix $k(I + \overline{A}) - J$ has entries $k-1$ or $-1$. More precisely, 
\[ \hspace{3cm}
\big( k(I + \overline{A}) - J \big)_{ij} = \begin{cases} 
      k-1 & \text{if} \hspace{0.2cm} i = j, \\
      -1 & \text{if} \hspace{0.2cm} ij \in E(G), \\
      k-1 & \text{if} \hspace{0.2cm} ij \notin E(G).
   \end{cases}
\]
Since $N$ is a nonnegative matrix, for every $ij \in E(G)$, we must have $D_{ij} \leq -1$. Then, since $D$ is a dd matrix, for every $i \in V(G)$, we must have $k-1 \geq \text{deg}(i)$, where $\text{deg}(i)$ denotes the degree of the vertex $i$ in $G$. Thus, for any feasible $k \in \R$, we have $k \geq \Delta(G) +1$, and therefore $\tau(G) \geq \Delta(G) +1$. 

It remains to show that $k = \Delta(G) +1$ is feasible to \eqref{eqn:tau_dd_n}.\footnote{This was also observed in \cite[Theorem 5.1]{AhmDasGeor}.} Indeed, if $k = \Delta(G) +1$, let
\[ \hspace{1cm}
D_{ij} = \begin{cases} 
      \Delta(G) & \text{if} \hspace{0.2cm} i = j, \\
      -1 & \text{if} \hspace{0.2cm} ij \in E(G), \\
      0 & \text{if} \hspace{0.2cm} ij \notin E(G),
   \end{cases}
\hspace{2.5cm}
N_{ij} = \begin{cases} 
      0 & \text{if} \hspace{0.2cm} i = j, \\
      0 & \text{if} \hspace{0.2cm} ij \in E(G), \\
      \Delta(G) & \text{if} \hspace{0.2cm} ij \notin E(G).
   \end{cases}
\]
Then, $k(I + \overline{A}) - J = D + N$, where $D \in DD_n$ and $N \in N_n$.
\end{proof}

We now move to the parameter $\gamma(G)$. Recall the following characterization of the largest eigenvalue of a symmetric $n \times n$ matrix $A$:
\begin{equation*}
\hspace{5cm}
\begin{aligned}
\lambda_{\text{max}}(A) \:\: = \:\:\:
& \min\limits_{k \in \R}
& & k \\
& \text{s.t.}
&& kI - A \in S_n^+.
\end{aligned}
\label{eqn:largest_eig_sdp}
\end{equation*}
The following lemma gives a refined characterization of the largest eigenvalue of the adjacency matrix of a graph.

\begin{lemma}
\label{lem:largest_eig_sdd}
Let $G$ be a graph on $n$ vertices with adjacency matrix $A$. Then,
\begin{equation*}
\hspace{5cm}
\begin{aligned}
\lambda_{\text{max}}(A) \:\: = \:\:\:
& \min\limits_{k \in \R}
& & k \\
& \text{s.t.}
&& kI - A \in SDD_n.
\end{aligned}
\label{eqn:largest_eig_adj_sdd}
\end{equation*}
\end{lemma}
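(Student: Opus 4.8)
The plan is to prove the two inequalities $\lambda_{\text{max}}(A) \leq \min\{k : kI - A \in SDD_n\}$ and $\min\{k : kI - A \in SDD_n\} \leq \lambda_{\text{max}}(A)$ separately. The first direction is immediate from the inclusion $SDD_n \subseteq S_n^+$ (Gershgorin): if $kI - A \in SDD_n$, then $kI - A \succeq 0$, hence $k \geq \lambda_{\text{max}}(A)$ by the standard variational characterization of the largest eigenvalue recalled just above the lemma. So the optimal value of the $SDD_n$-constrained program is at least $\lambda_{\text{max}}(A)$.

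For the reverse direction I would show directly that $k = \lambda_{\text{max}}(A)$ is feasible, i.e., that $\lambda_{\text{max}}(A) I - A \in SDD_n$. Let $v$ be the Perron eigenvector of $A$ associated with $\lambda_{\text{max}}(A)$; since $A$ is a nonnegative symmetric matrix (it is an adjacency matrix), the Perron--Frobenius theorem guarantees that $v$ can be taken entrywise nonnegative, and if $G$ is connected $v$ is entrywise positive. I would then consider the diagonal scaling matrix $D = \text{diag}(v_1, \dots, v_n)$ (using a small perturbation, or handling zero entries by treating connected components separately, to ensure positivity of the diagonal). The claim is that $D(\lambda_{\text{max}}(A) I - A)D$ is diagonally dominant: its $(i,j)$ entry is $-v_i A_{ij} v_j$ for $i \neq j$ and $v_i^2(\lambda_{\text{max}}(A) - A_{ii}) = \lambda_{\text{max}}(A) v_i^2$ on the diagonal (as $A_{ii} = 0$). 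The off-diagonal absolute row sum in row $i$ is $\sum_{j \neq i} v_i A_{ij} v_j = v_i \sum_j A_{ij} v_j = v_i (Av)_i = v_i \lambda_{\text{max}}(A) v_i = \lambda_{\text{max}}(A) v_i^2$, which exactly equals the diagonal entry. Hence $D(\lambda_{\text{max}}(A)I - A)D$ is (weakly) diagonally dominant, so by definition $\lambda_{\text{max}}(A) I - A \in SDD_n$, giving the matching upper bound.

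The main obstacle is the positivity requirement on the diagonal of $D$ in the definition of sdd: the Perron eigenvector may have zero entries when $G$ is disconnected (or even when it is connected but bipartite-like issues arise — though for $\lambda_{\text{max}}$ of a connected graph Perron--Frobenius gives strict positivity). I would handle this by first reducing to the case of a connected graph — the adjacency matrix of a disconnected graph is block diagonal, $SDD_n$ respects this block structure, and $\lambda_{\text{max}}$ of the whole graph is the max over components — and then noting that for a connected graph with at least one edge, Perron--Frobenius yields a strictly positive eigenvector, so $D$ has strictly positive diagonal. The edgeless case ($A = 0$, $\lambda_{\text{max}} = 0$) is trivial since $0 \in SDD_n$. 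A secondary point to be careful about is that the definition of dd in the paper uses $\geq$ (weak diagonal dominance), so the exact equality obtained above suffices and no strictness is needed.
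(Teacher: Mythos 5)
Your proposal is correct and follows essentially the same route as the paper: both prove feasibility of $k=\lambda_{\text{max}}(A)$ by scaling with the diagonal matrix formed from the Perron--Frobenius eigenvector of a connected component, verify weak diagonal dominance via the eigenvalue equation $Av=\lambda_{\text{max}}(A)v$, and reduce the disconnected case to components using the block structure of $A$ and $SDD_n$. The only cosmetic difference is that you handle the one direction via Gershgorin explicitly and dispatch the edgeless case separately, which the paper folds into the same argument.
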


\begin{proof}
Since $SDD_n \subseteq S_n^+$, it is enough to show that for $k = \lambda_{\text{max}}(A)$, we have $kI - A \in SDD_n$. Assume first that $G$ is a connected graph. Let $v = (v_1, \dots, v_n)^T$ be an eigenvector that corresponds to $\lambda_{\text{max}}(A)$. Then, by the Perron-Frobenius\footnote{Recall that the adjacency matrix of a connected graph is irreducible and that the Perron-Frobenius theorem applies to nonnegative irreducible matrices, that is, the largest eigenvalue of an irreducible nonnegative matrix is positive and the corresponding eigenvector can be chosen to be positive.} theorem \cite{Perron, Frobenius}, we have $\lambda_{\text{max}}(A) > 0$ and we may assume that $v_i > 0$ for $i=1,\dots,n$. Let $D = \text{Diag}(v_1, \dots, v_n)$, where $\text{Diag}(v_1, \dots, v_n)$ denotes the $n \times n$ diagonal matrix with the vector $(v_1, \dots, v_n)^T$ on its diagonal. We claim that $D (\lambda_{\text{max}}(A) I - A) D$ is dd, and thus $\lambda_{\text{max}}(A)I - A \in SDD_n$. We have 
$$D (\lambda_{\text{max}}(A) I - A) D = \lambda_{\text{max}}(A) \cdot \text{Diag}(v_1^2, \dots, v_n^2) - A |_{v_iv_j: \: ij \in E(G)},$$
where $A |_{v_iv_j: \: ij \in E(G)}$ denotes the adjacency matrix $A$ with 1's replaced by $v_iv_j$ for $ij \in E(G)$. Hence, the matrix $D (\lambda_{\text{max}}(A) I - A) D$ is dd if and only if $$\lambda_{\text{max}}(A) \cdot v_i^2 \geq \sum\limits_{ij \in E(G)} v_iv_j$$ for every $i=1,\dots,n$, which holds if and only if $\lambda_{\text{max}}(A) \cdot v_i \geq \sum_{ij \in E(G)} v_j$ for every $i=1,\dots,n$. But since $Av = \lambda_{\text{max}}(A) v$, we have $\lambda_{\text{max}}(A) \cdot v_i = \sum_{ij \in E(G)} v_j$ for every $i=1,\dots,n$. This completes the proof for the case when $G$ is a connected graph.

Next, assume that $G$ has several connected components $G_1, \dots, G_r$, with adjacency matrices $A_1, \dots, A_r$ and with $|V(G_i)| = n_i$ for $i=1, \dots, r$. Then, $A$ is a block diagonal matrix with diagonal blocks $A_1, \dots, A_r$, and 
\begin{equation*}
\hspace{-1cm}
\begin{aligned}
\lambda_{\text{max}}(A) \: = \:
\max\limits_{1\leq i \leq r} \lambda_{\text{max}}(A_i) \: =  \:
\max\limits_{1\leq i \leq r}
\min\limits_{k \in \R} \quad
& k \\
\text{s.t.} \quad
& kI - A_i \in SDD_{n_i}
\end{aligned}
\begin{aligned}
\: = \:
\min\limits_{k \in \R} \quad
& k \\
\text{s.t.} \quad
& kI - A_i \in SDD_{n_i}, \: i=1,\dots,r
\end{aligned}
\end{equation*}
\begin{equation*}
\hspace{9.18cm}
\begin{aligned}
\: = \: 
\min\limits_{k \in \R} \quad
& k \\
\text{s.t.} \quad
& kI - A \in SDD_n.
\end{aligned}
\end{equation*}
This completes the proof.
\end{proof}

\begin{theorem}
For any graph $G$, we have $\gamma(G) = \lambda_{\text{max}}(A) + 1$.
\label{thm:sdd_lambda_max}
\end{theorem}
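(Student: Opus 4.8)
The plan is to prove the two inequalities $\gamma(G)\le\lambda_{\text{max}}(A)+1$ and $\gamma(G)\ge\lambda_{\text{max}}(A)+1$ separately, working with the reformulation \eqref{eqn:gamma_sdd_n} of $\gamma(G)$ and following the same pattern as the proof of Theorem~\ref{thm:dd_max_deg}, now invoking Lemma~\ref{lem:largest_eig_sdd} in place of the elementary bound for $DD_n$. A useful identity throughout is $k(I+\overline{A})-J=(k-1)I-A+(k-1)\overline{A}$, which follows from $J=I+A+\overline{A}$.

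For the upper bound I would check that $k=\lambda_{\text{max}}(A)+1$ is feasible for \eqref{eqn:gamma_sdd_n}. With this value, $k(I+\overline{A})-J=\bigl(\lambda_{\text{max}}(A)I-A\bigr)+\lambda_{\text{max}}(A)\,\overline{A}$. By Lemma~\ref{lem:largest_eig_sdd} the first summand lies in $SDD_n$; and since the adjacency matrix of any graph satisfies $\lambda_{\text{max}}(A)\ge0$ while $\overline{A}\ge0$, the second summand lies in $N_n$. Hence $k(I+\overline{A})-J\in SDD_n+N_n$, so $\gamma(G)\le\lambda_{\text{max}}(A)+1$.

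For the lower bound, suppose $k$ is feasible for \eqref{eqn:gamma_sdd_n}, i.e.\ $k(I+\overline{A})-J=S+N$ with $S\in SDD_n$ and $N\in N_n$. As in the proof of Theorem~\ref{thm:dd_max_deg}, the entries of $k(I+\overline{A})-J$ are $k-1$ on the diagonal, $-1$ at the edges of $G$, and $k-1$ at the non-edges; since $N\ge0$ entrywise, this forces $S_{ii}\le k-1$ for every $i$ and $S_{ij}\le-1$, hence $|S_{ij}|\ge1=A_{ij}$, for every $ij\in E(G)$, so $|S_{ij}|\ge A_{ij}$ for all $i\ne j$. Because $S\in SDD_n$, there is a matrix $D=\text{Diag}(d_1,\dots,d_n)$ with all $d_i>0$ such that $DSD$ is diagonally dominant; writing the diagonal-dominance inequality at row $i$ and cancelling a factor $d_i>0$ gives $(k-1)d_i\ge d_iS_{ii}\ge\sum_{j\ne i}d_j|S_{ij}|\ge\sum_{j}d_jA_{ij}=(Ad)_i$, that is, $(k-1)d\ge Ad$ entrywise with $d>0$. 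Letting $w\ge0$, $w\ne0$, be a Perron eigenvector of the nonnegative symmetric matrix $A$, so that $Aw=\lambda_{\text{max}}(A)\,w$ (here $\lambda_{\text{max}}(A)$ equals the spectral radius of $A$), and pairing $(k-1)d\ge Ad$ with $w$ yields $(k-1)\,w^{T}d\ge w^{T}Ad=\lambda_{\text{max}}(A)\,w^{T}d$; since $w^{T}d>0$ we conclude $k\ge\lambda_{\text{max}}(A)+1$. Taking the minimum over feasible $k$ gives $\gamma(G)\ge\lambda_{\text{max}}(A)+1$, and combining the two bounds proves the theorem.

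The step I expect to require the most care is the lower bound, specifically the need to extract the positive scaling vector $d$ from $S$ rather than from $k(I+\overline{A})-J$ itself (the latter need not be sdd), followed by the Perron--Frobenius/Collatz--Wielandt argument turning $(k-1)d\ge Ad$ into $\lambda_{\text{max}}(A)\le k-1$. This relies only on $A$ being nonnegative and symmetric, so no connectedness hypothesis on $G$ is needed; if one prefers to avoid Perron--Frobenius one could instead argue via the factor-width-two description of $SDD_n$ together with a Rayleigh-quotient estimate against an eigenvector of $A$ for $\lambda_{\text{max}}(A)$, but the route above seems the shortest.
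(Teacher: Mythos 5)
Your proof is correct. The upper bound is the same as the paper's: $k=\lambda_{\text{max}}(A)+1$ is feasible for \eqref{eqn:gamma_sdd_n} because $(\lambda_{\text{max}}(A)I-A)+\lambda_{\text{max}}(A)\overline{A}$ splits into an $SDD_n$ part (via Lemma~\ref{lem:largest_eig_sdd}) plus a nonnegative part. For the lower bound, both arguments extract the positive scaling vector $d$ from the sdd summand $S$ (not from $k(I+\overline{A})-J$ itself, as you rightly emphasize) and arrive at the same row inequalities $(k-1)d_i^2\ge\sum_{ij\in E(G)}d_id_j$; where you diverge is in the endgame. The paper observes that these inequalities say precisely that $D((k-1)I-A)D$ is diagonally dominant, hence $(k-1)I-A\in SDD_n$, and then concludes $k-1\ge\lambda_{\text{max}}(A)$ by feeding this back into Lemma~\ref{lem:largest_eig_sdd} (whose lower-bound half is just the inclusion $SDD_n\subseteq S_n^+$). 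You instead read the inequalities as the entrywise statement $(k-1)d\ge Ad$ with $d>0$ and close with a Collatz--Wielandt pairing against a nonnegative Perron eigenvector of $A$; your observation that $\lambda_{\text{max}}(A)$ equals the spectral radius of the nonnegative symmetric matrix $A$, so that such an eigenvector exists and $w^Td>0$, is what makes this work without any connectedness assumption. The paper's route is slightly more economical in that it reuses Lemma~\ref{lem:largest_eig_sdd} verbatim and needs only Gershgorin ($SDD_n\subseteq S_n^+$) at the last step; yours is self-contained past the feasibility step and makes the spectral mechanism more transparent, at the cost of invoking Perron--Frobenius for possibly reducible matrices. Both are complete.
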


\begin{proof}
Let $G$ be a graph on $n$ vertices, and let $A$ and $\overline{A}$ be the adjacency matrices of $G$ and $\overline{G}$ respectively. Since $k(I + \overline{A}) - J = (k-1) I - A + (k-1) \overline{A}$, by \eqref{eqn:gamma_sdd_n}, we have
\begin{equation}
\hspace{3.5cm}
\begin{aligned}
\gamma(G) \:\: = \:\:\:\:
& \min\limits_{k \in \R}
& & k \\
& \text{s.t.}
&& (k-1) I - A + (k-1) \overline{A} \in SDD_n + N_n.
\end{aligned}
\label{eqn:gamma_sdd_n_2}
\end{equation}
By Lemma \ref{lem:largest_eig_sdd}, we also have
\begin{equation}
\hspace{2.8cm}
\begin{aligned}
\lambda_{\text{max}}(A) + 1 \:\: = \:\:\:
& \min\limits_{k \in \R}
& & k \\
& \text{s.t.}
&& (k-1) I - A \in SDD_n.
\end{aligned}
\label{eqn:largest_eig_adj_sdd_2}
\end{equation}
We claim that 
\begin{equation*}
\hspace{0.5cm}
\begin{aligned}
& \min\limits_{k \in \R}
& & k \\
& \text{s.t.}
&& (k-1) I - A \in SDD_n
\end{aligned}
\hspace{0.5cm}
\begin{aligned}
= \hspace{0.7cm}
& \min\limits_{k \in \R}
& & k \\
& \text{s.t.}
&& (k-1) I - A + (k-1) \overline{A} \in SDD_n + N_n.
\end{aligned}
\end{equation*}
Since $(k-1) \overline{A} \in N_n$ for any $k \geq 1$, any feasible solution to \eqref{eqn:largest_eig_adj_sdd_2} is also feasible to \eqref{eqn:gamma_sdd_n_2}. Next, we show that any feasible solution to \eqref{eqn:gamma_sdd_n_2} is also feasible to \eqref{eqn:largest_eig_adj_sdd_2}. Assume for the sake of contradiction that for some $k \in \R$, we have $(k-1) I - A + (k-1) \overline{A} \in SDD_n + N_n$, but $(k-1) I - A \notin SDD_n$. Then, there exists a diagonal matrix $D$ with positive diagonal entries, a dd matrix $M$, and a nonnegative matrix $N$, such that $D ((k-1) I - A) D + (k-1) D\overline{A}D = M +N$. We have
\[ \hspace{1cm}
\Big (D ((k-1) I - A) D + (k-1) D\overline{A}D \Big)_{ij} = \begin{cases} 
      (k-1)d_i^2 & \text{if} \hspace{0.2cm} i = j, \\
      -d_id_j & \text{if} \hspace{0.2cm} ij \in E(G), \\
      (k-1)d_id_j & \text{if} \hspace{0.2cm} ij \notin E(G).
   \end{cases}
\]
Since $N$ is a nonnegative matrix, for every $ij \in E(G)$, we must have $M_{ij} \leq -d_id_j$. Then, since $M$ is a dd matrix, for every $i \in V(G)$, we must have $(k-1)d_i^2 \geq \sum_{ij \in E(G)} d_id_j$. But then the matrix $D ((k-1) I - A) D$ is a dd matrix since we have
\[ \hspace{2cm}
(D ((k-1) I - A) D)_{ij} = \begin{cases} 
      (k-1)d_i^2 & \text{if} \hspace{0.2cm} i = j, \\
      -d_id_j & \text{if} \hspace{0.2cm} ij \in E(G), \\
      0 & \text{if} \hspace{0.2cm} ij \notin E(G).
   \end{cases}
\]
Hence, $(k-1) I - A \in SDD_n$, a contradiction.
\end{proof}

Next, we characterize the graphs $G$ for which $\omega(H) = \tau(H)$ or $\omega(H) = \gamma(H)$ for every induced subgraph $H$ of $G$. A path on three vertices, denoted by $P_3$, is a graph with vertex set $\{v_1, v_2, v_3\}$ and edge set $\{v_1v_2, v_2v_3\}$; see Figure \ref{fig:P3_and_complement} for the graph $P_3$ and its complement $\overline{P_3}$. It is easy to observe that a graph is $P_3$-free if and only if it is a disjoint union of cliques (i.e., a graph whose connected components are complete graphs). Taking the complement, it follows that a graph is $\overline{P_3}$-free if and only if it is complete multipartite (i.e., a graph whose vertices can be partitioned into independent sets in such a way that all edges across independent sets are present).
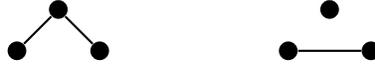
\begin{figure}[h]
\begin{center}
\begin{tikzpicture}[scale=0.22]

\node[inner sep=2.5pt, fill=black, circle] at (0,0)(v1){}; 
\node[inner sep=2.5pt, fill=black, circle] at (2.5,2.5)(v2){}; 
\node[inner sep=2.5pt, fill=black, circle] at (5, 0)(v3){}; 

\draw[black, thick] (v1) -- (v2);
\draw[black, thick] (v2) -- (v3);

\end{tikzpicture}
\hspace{2cm}
\begin{tikzpicture}[scale=0.22]

\node[inner sep=2.5pt, fill=black, circle] at (0,0)(v1){}; 
\node[inner sep=2.5pt, fill=black, circle] at (2.5,2.5)(v2){}; 
\node[inner sep=2.5pt, fill=black, circle] at (5, 0)(v3){}; 

\draw[black, thick] (v1) -- (v3);

\end{tikzpicture}

\end{center}
\vspace{-0.5cm}
\caption{The graph $P_3$ (left) and its complement $\overline{P_3}$ (right)}
\label{fig:P3_and_complement}
\end{figure}

\begin{corollary}
For any graph $G$, the following are equivalent:
\begin{enumerate}[(a)]
\itemsep0em
\item\label{item:tau} $\omega(H) = \tau(H)$ for every induced subgraph $H$ of $G$,
\item\label{item:gamma} $\omega(H) = \gamma(H)$ for every induced subgraph $H$ of $G$,
\item\label{item:graph} $G$ is a disjoint union of cliques.
\end{enumerate}
\end{corollary}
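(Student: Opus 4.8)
The plan is to prove the cyclic chain $\ref{item:tau} \Rightarrow \ref{item:gamma} \Rightarrow \ref{item:graph} \Rightarrow \ref{item:tau}$, drawing on Theorem~\ref{thm:dd_max_deg} (which gives $\tau(G) = \Delta(G)+1$), Theorem~\ref{thm:sdd_lambda_max} (which gives $\gamma(G) = \lambda_{\text{max}}(A)+1$), and the sandwich $\omega(G) \le \gamma(G) \le \tau(G)$ recorded in Section~\ref{subsec:dsos_sdsos} (a consequence of the fact that dsos polynomials are sdsos and sdsos polynomials are sos).

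First, $\ref{item:tau} \Rightarrow \ref{item:gamma}$ is immediate: for every induced subgraph $H$ of $G$ we have $\omega(H) \le \gamma(H) \le \tau(H)$, so $\omega(H) = \tau(H)$ forces $\omega(H) = \gamma(H)$. Next, for $\ref{item:graph} \Rightarrow \ref{item:tau}$, I would use that an induced subgraph of a disjoint union of cliques is again a disjoint union of cliques, so it suffices to check $\omega(G) = \tau(G)$ when $G$ itself is such a graph. By Theorem~\ref{thm:dd_max_deg} this reduces to the observation that $\Delta(G) = \omega(G) - 1$: each component of $G$ is a complete graph $K_s$ all of whose vertices have degree $s-1$, so the maximum degree is attained in the largest component and equals $\omega(G)-1$.

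The substantive implication is $\ref{item:gamma} \Rightarrow \ref{item:graph}$, which I would prove by contraposition. If $G$ is not a disjoint union of cliques, then, as recalled just before the statement, $G$ is not $P_3$-free, so it contains an induced copy of $P_3$. For $H = P_3$ we have $\omega(P_3) = 2$, whereas the adjacency matrix of $P_3$ has characteristic polynomial $\lambda(\lambda^2-2)$, hence eigenvalues $0$ and $\pm\sqrt{2}$, so by Theorem~\ref{thm:sdd_lambda_max}, $\gamma(P_3) = \lambda_{\text{max}}(A_{P_3}) + 1 = \sqrt{2} + 1 > 2$. Thus $\omega(H) \ne \gamma(H)$ for this induced subgraph, so $\ref{item:gamma}$ fails, which closes the cycle.

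There is no genuine obstacle here; the only points needing (minimal) care are that both ``disjoint union of cliques'' and ``$P_3$-free'' are hereditary (the former used in $\ref{item:graph} \Rightarrow \ref{item:tau}$, the latter to extract the induced $P_3$ in the contrapositive step), together with the two one-line computations $\Delta(G) = \omega(G)-1$ for a disjoint union of cliques and $\lambda_{\text{max}}(A_{P_3}) = \sqrt{2}$. If a fully symmetric write-up is preferred, one can instead establish $\ref{item:tau} \Leftrightarrow \ref{item:graph}$ and $\ref{item:gamma} \Leftrightarrow \ref{item:graph}$ independently, additionally noting $\tau(P_3) = \Delta(P_3)+1 = 3$ and that the adjacency matrix of a disjoint union of cliques has largest eigenvalue $\omega(G)-1$; but the three-step cycle above is the shortest route.
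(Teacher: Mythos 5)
Your proposal is correct and follows essentially the same route as the paper: the implication (a)~$\Rightarrow$~(b) via $\omega(H) \le \gamma(H) \le \tau(H)$, the contrapositive of (b)~$\Rightarrow$~(c) via the induced $P_3$ with $\gamma(P_3) = \sqrt{2}+1 > 2 = \omega(P_3)$, and (c)~$\Rightarrow$~(a) via $\Delta(G) = \omega(G)-1$ for disjoint unions of cliques together with heredity. The only (harmless) difference is that you spell out the characteristic polynomial computation for $A_{P_3}$, which the paper states without derivation.
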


\begin{proof}
Since $\gamma(H) \leq \tau(H)$ for any graph $H$, \eqref{item:tau} clearly implies \eqref{item:gamma}. We first show that \eqref{item:gamma} implies \eqref{item:graph}. Suppose $G$ is not a disjoint union of cliques. Then, $G$ contains $P_3$, and we have 
$$2 = \omega(P_3) < \gamma(P_3) = \lambda_{\text{max}}(A_{P_3}) + 1 = \sqrt{2} + 1,$$
where $A_{P_3}$ is the adjacency matrix of $P_3$ and the equality $\gamma(P_3) = \lambda_{\text{max}}(A_{P_3}) + 1$ follows from Theorem~\ref{thm:sdd_lambda_max}. This shows that $G$ has an induced subgraph $H$ with $\omega(H) \neq \gamma(H)$.

Next, we show that \eqref{item:graph} implies \eqref{item:tau}. Let $G$ be a disjoint union of cliques. Clearly, $\omega(G) = \Delta(G) +1$ and by Theorem \ref{thm:dd_max_deg}, we have $\omega(G) = \tau(G)$. Since every induced subgraph of $G$ is also a disjoint union of cliques, we have $\omega(H) = \tau(H)$ for every induced subgraph $H$ of $G$.
\end{proof}

\subsection{Another structured subset of sos polynomials}\label{subsec:Sn+_psd}

For a graph $G$ and for an arbitrary scalar $k \in \R$, let us define the following quadratic polynomial:
$$\hat p_{G,k}(x) \mathrel{\mathop:}= -2 k \mathlarger{\sum}\limits_{ij \in E(G)} x_ix_j + (k - 1) \left( \mathlarger{\sum}\limits_{i=1}^{|V(G)|} x_i \right)^2.$$
Equivalently, the polynomial $\hat p_{G,k}(x)$ is obtained from $p_{G,k}(x)$ (defined in \eqref{eqn:pgkx}) by replacing $x_i^2$ with $x_i$ for $i=1,\dots,n$. We then define the following parameter:
\begin{equation}
\hspace{4.7cm}
\begin{aligned}
\rho(G) \: \mathrel{\mathop:}= \:\:
& \min\limits_{k \in \R}
& & k \\
& \text{s.t.}
&& \hat p_{G,k}(x) \text{ is nonnegative.}
\end{aligned}
\label{eq:rho_psd}
\end{equation}

Since $\hat p_{G,k}(x)$ is a quadratic polynomial, it is nonnegative if and only if it is sos. Thus, whenever $\hat p_{G,k}(x)$ is nonnegative, the polynomial $p_{G,k}(x_1, \dots, x_n) = \hat p_{G,k}(x_1^2, \dots, x_n^2)$ is sos. Hence, in view of \eqref{eq:omega_nonnegative} and \eqref{eq:theta_sos2}, we have $\omega(G) \leq \vartheta'(\overline{G}) \leq \rho(G)$.

\begin{remark}
Recall the formulations of $\omega(G)$ and $\vartheta'(\overline{G})$ in \eqref{eqn:omega_copos} and \eqref{eqn:theta_sos}, and the fact that $\mathcal{K}_n = S_n^+ + N_n$. We remark that the polynomial $\hat p_{G,k}(x)$ is nonnegative if and only if $k(I + \overline{A}) - J \in S_n^+$. Hence, $\rho(G)$ can equivalently be formulated as:
\begin{equation}
\hspace{4.8cm}
\begin{aligned}
\rho(G) = \:
& \min\limits_{k \in \R}
& & k \\
& \text{s.t.}
&& k(I + \overline{A}) - J \in S_n^+.
\end{aligned}
\label{eqn:rho_psd}
\end{equation}
(This upper bound on $\omega(G)$ has also been mentioned but not carefully analyzed in~\cite[Section 2.1]{BFL}.) One can also define an upper bound on $\omega(G)$ by replacing the set $S_n^+$ in \eqref{eqn:rho_psd} with $N_n$. However, it is easy to observe that $k(I + \overline{A}) - J \in N_n$ for some $k \in \R$ if and only if $\overline{A} = J - I$, which holds if and only if the graph $G$ has no edge.
\end{remark}


The result of this subsection is the following theorem.

\begin{theorem}
The parameter $\rho(G)$ is finite if and only if $G$ is complete multipartite, in which case $\rho(G) = \omega(G)$.
\label{thm:Sn+_comp_multi}
\end{theorem}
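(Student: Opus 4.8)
The plan is to run everything through the matrix reformulation
$\rho(G) = \min\{k\in\R : k(I+\overline{A}) - J \in S_n^+\}$
given in the remark preceding the theorem, together with the already-established sandwich $\omega(G) \le \vartheta'(\overline{G}) \le \rho(G)$. I would split the ``if and only if'' into two implications: first show that if $G$ is complete multipartite then $\rho(G)=\omega(G)$ (in particular finite), and then show that if $G$ is \emph{not} complete multipartite then the feasibility problem defining $\rho(G)$ is infeasible, so $\rho(G)=+\infty$.

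First I would treat the complete multipartite case. Write the parts as $V_1,\dots,V_r$ with $|V_i|=n_i$, so that $\omega(G)=r$ and $\overline{G}$ is the disjoint union of the cliques $K_{n_1},\dots,K_{n_r}$; hence $\overline{A}=A_{\overline G}$ is block diagonal with $i$-th block $J_{n_i}-I_{n_i}$, and $I+\overline{A}$ is block diagonal with $i$-th block the all-ones matrix $J_{n_i}$. Given $x=(x^{(1)},\dots,x^{(r)})$ with $x^{(i)}\in\R^{n_i}$, set $s_i := \sum_{v\in V_i} x_v$. Then $x^T(I+\overline{A})x = \sum_{i=1}^r s_i^2$ and $x^T J x = \big(\sum_{i=1}^r s_i\big)^2$, so $x^T\big(r(I+\overline{A})-J\big)x = r\sum_{i=1}^r s_i^2 - \big(\sum_{i=1}^r s_i\big)^2 \ge 0$ by the Cauchy--Schwarz inequality. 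Thus $k=r$ is feasible, giving $\rho(G)\le r=\omega(G)$; combined with $\rho(G)\ge\omega(G)$ this yields $\rho(G)=\omega(G)$.

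For the converse, suppose $G$ is not complete multipartite. As recalled in the text this means $G$ contains $\overline{P_3}$ as an induced subgraph, i.e.\ $\overline{G}$ contains an induced $P_3$, on vertices which after relabelling we may take to be $\{1,2,3\}$ with $2$ adjacent (in $\overline{G}$) to both $1$ and $3$. The $3\times 3$ principal submatrix of $k(I+\overline{A})-J$ indexed by $\{1,2,3\}$ is then
\[
M_k \;=\; \begin{pmatrix} k-1 & k-1 & -1 \\ k-1 & k-1 & k-1 \\ -1 & k-1 & k-1 \end{pmatrix},
\]
and any principal submatrix of a psd matrix is psd. But $M_k\succeq 0$ would force the $(1,1)$ entry $k-1\ge 0$, i.e.\ $k\ge 1$, whereas evaluating the quadratic form at $v=(1,-1,1)^T$ gives $v^T M_k v = -(k+1) < 0$ for every such $k$ — a contradiction. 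Hence no $k\in\R$ is feasible and $\rho(G)=+\infty$.

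I do not expect a serious obstacle here: the only content is the psd verification $r(I+\overline{A})-J\succeq 0$ via Cauchy--Schwarz, and the one point requiring care is bookkeeping — matching the orientation of the induced $P_3$ inside $\overline{G}$ with the stated form of $M_k$, and making sure the phrase ``$\rho(G)$ is finite'' is interpreted as the feasibility problem being feasible (otherwise $\rho(G)=+\infty$ by convention).
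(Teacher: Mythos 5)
Your proof is correct and follows the same skeleton as the paper's: both pass to the matrix formulation $\rho(G)=\min\{k: k(I+\overline{A})-J\in S_n^+\}$, both derive infeasibility for non-complete-multipartite $G$ from the $3\times 3$ principal submatrix coming from an induced $\overline{P_3}$, and both settle the complete multipartite case by showing $k=\omega(G)$ is feasible and invoking $\omega(G)\le\rho(G)$. Where you diverge is in the two verification subroutines, in both cases toward something more elementary. For the positive direction, the paper exhibits an explicit congruence of $\omega(G)(I+\overline{A})-J$ as $LML^T$ with block matrices $L_{m_i}$, $U_{m_i}$ and observes that the middle factor reduces to $kI-J$ after deleting zero rows and columns; your identity $x^T(I+\overline{A})x=\sum_i s_i^2$, $x^TJx=\bigl(\sum_i s_i\bigr)^2$ reduces the whole claim to the Cauchy--Schwarz inequality $\bigl(\sum_i s_i\bigr)^2\le r\sum_i s_i^2$, which is shorter and avoids the factorization bookkeeping. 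For the negative direction, the paper runs a principal-minor analysis ($k-1\ge 0$, $(k-1)^2-1\ge 0$, $-k^2(k-1)\ge 0$) to reach a contradiction, while you combine the diagonal constraint $k\ge 1$ with the explicit witness $v=(1,-1,1)^T$, for which $v^TM_kv=-(k+1)<0$; I verified this evaluation and your orientation of the induced $\overline{P_3}$, so there is no gap.
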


\begin{proof}
Suppose first that $G$ is not complete multipartite. We show that the matrix $k(I + \overline{A}) - J$ in~\eqref{eqn:rho_psd} is not psd for any $k \in \R$, and thus $\rho(G)$ is not finite. Since $G$ is not complete multipartite, it contains $\overline{P_3}$. Consider the submatrix of $k(I + \overline{A}) - J$ that corresponds to $\overline{P_3}$: 
$$
  \begin{pmatrix}
     k-1 & k-1 & -1 \\
    k-1 & k-1 & k-1 \\
    -1 & k-1 & k-1
  \end{pmatrix}.
$$
Suppose this matrix is psd for some $k \in \R$. Then, by nonnegativity of its principal minors, we have $k-1 \geq 0$, $(k-1)^2 - 1 \geq 0$, and $-k^2(k-1) \geq 0$. The first and the third inequalities together imply that $k=1$, which contradicts the second inequality.

Conversely, suppose $G$ is complete multipartite. We show that the matrix $k (I + \overline{A}) - J$ is psd for $k=\omega(G)$. Since $\omega(G) \leq \rho(G)$, this would prove that $\rho(G) = \omega(G)$. Let $k=\omega(G)$. Since $G$ is complete multipartite, $V(G)$ can be partitioned into $\omega(G)$ independent sets $V(G) = I_1 \cup \dots \cup I_k$. For $i=1,\dots, k$, let $|I_i| = m_i$. Since
\[ \hspace{3cm}
\big( k(I + \overline{A}) - J \big)_{ij} = \begin{cases} 
      k-1 & \text{if} \hspace{0.2cm} i = j, \\
      -1 & \text{if} \hspace{0.2cm} ij \in E(G), \\
      k-1 & \text{if} \hspace{0.2cm} ij \notin E(G),
   \end{cases}
\]
we have
$$
k(I + \overline{A}) - J = 
  \begin{pmatrix}
      (k-1) J_{m_1} & -J_{m_1m_2} & \dots & -J_{m_1m_k} \\
      -J_{m_2m_1}  & (k-1)J_{m_2} & \dots & -J_{m_2m_k} \\
      \vdots & \vdots  & \ddots & \vdots \\
      -J_{m_km_1} & -J_{m_km_2} & \dots & (k-1)J_{m_k} \\
  \end{pmatrix},
$$
where $J_{m_i}$ and $J_{m_im_j}$ respectively denote the $m_i \times m_i$ and $m_i \times m_j$ all-ones block matrices. Then, observe that the matrix $k(I + \overline{A}) - J$ can be expressed as
$$
  \begin{pmatrix}
      L_{m_1} & 0 & \dots & 0 \\
      0  & L_{m_2} & \dots & 0 \\
      \vdots & \vdots  & \ddots & \vdots \\
      0 & 0 & \dots & L_{m_k} \\
  \end{pmatrix}
  \begin{pmatrix}
      (k-1) U_{m_1} & -U_{m_1m_2} & \dots & -U_{m_1m_k} \\
      -U_{m_2m_1}  & (k-1)U_{m_2} & \dots & -U_{m_2m_k} \\
      \vdots & \vdots  & \ddots & \vdots \\
      -U_{m_km_1} & -U_{m_km_2} & \dots & (k-1)U_{m_k} \\
  \end{pmatrix}
    \begin{pmatrix}
      L_{m_1}^T & 0 & \dots & 0 \\
      0  & L_{m_2}^T & \dots & 0 \\
      \vdots & \vdots  & \ddots & \vdots \\
      0 & 0 & \dots & L_{m_k}^T \\
  \end{pmatrix},
$$
where $L_{m_i}$ is an $m_i \times m_i$ matrix with 1's on its diagonal and on its first column, and 0's everywhere else; and $U_{m_i}$ and $U_{m_im_j}$ are respectively the $m_i \times m_i$ and $m_i \times m_j$ matrices with 1 on the first element of the first row, and 0's everywhere else. Since the matrix in the middle is psd,\footnote{After removing rows and columns consisting of 0's, the matrix in the middle reduces to the $k \times k$ matrix $kI - J$, which has eigenvalues 0 with multiplicity 1, and $k$ with multiplicity $k-1$.} it follows that the matrix $k (I + \overline{A}) - J$ is psd.
\end{proof}

It follows from Theorem~\ref{thm:Sn+_comp_multi} that a graph $G$ is complete multipartite if and only if $\omega(G) = \rho(G)$, which holds if and only if $\omega(H) = \rho(H)$ for every induced subgraph $H$ of $G$.

\section{Future Research Directions}\label{sec:revisit_perfect}


The algebraic characterization of perfect graphs presented in this paper enables an interesting interplay between structural graph theory and nonnegative polynomials (or copositive matrices). In Section~\ref{sec:nonnegative_but_not_sos}, we exploited certain results from graph theory to systematically construct nonnegative polynomials that are not sos, a task which is of interest to the algebraic geometry and polynomial optimization communities. We believe that future research can also transfer ideas in the other direction. As an example, by using facts from linear algebra and optimization, the following graph-theoretic statement follows easily from Corollary~\ref{cor:G_perfect} (without using the strong perfect graph theorem).


\begin{proposition}
\label{prop:NLOC}
A graph with no odd cycles of length 5 or more is perfect.
\end{proposition}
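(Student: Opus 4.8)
The plan is to combine Corollary~\ref{cor:G_perfect} with a structural classification of the graphs in question. Since a graph has no odd cycle of length $5$ or more if and only if the same is true of each of its subgraphs, the class of such graphs is closed under taking induced subgraphs; hence, by Corollary~\ref{cor:G_perfect}, it suffices to prove that $\omega(G) = \vartheta'(\overline{G})$ for every graph $G$ with no odd cycle of length $5$ or more. By the inequalities $\omega(G) \le \vartheta'(\overline{G}) \le \chi(G)$ in~\eqref{eqn:sandwich}, all we need is a certificate that $\vartheta'(\overline{G}) \le \omega(G)$. Moreover, every cycle of $G$ lives inside a single block (maximal $2$-connected subgraph) of $G$, so the property of having no odd cycle of length $5$ or more is inherited by each block, and since $\omega$ and $\chi$ (and hence, by the above sandwich applied blockwise, $\vartheta'(\overline{\cdot})$) of a graph are the maxima of the corresponding parameters over its blocks, we may and do assume that $G$ is $2$-connected.

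The main step --- and the expected main obstacle --- is to show that a $2$-connected graph $G$ with no odd cycle of length $5$ or more is either bipartite, equal to $K_4$, or a ``book'' $B_k = K_2 \vee \overline{K_k}$ (that is, $k$ triangles sharing a common edge; here $B_1 = K_3$). Assume $G$ is not bipartite. Then $G$ has an odd cycle, and since it has no odd cycle of length at least $5$, its shortest odd cycle is a triangle $T = \{a,b,c\}$. Using $2$-connectivity (the fan version of Menger's theorem, and connectivity of $G - a$), I would first show that every vertex $v \notin T$ is adjacent to at least two vertices of $T$: otherwise one finds a path from $v$ into $T$ (or into $\{b,c\}$ inside $G - a$, when $v$ sees only $a$) that avoids some vertex of $T$, and closing this path up through $T$ in the two available ways produces two cycles of consecutive lengths, the odd one of which has length at least $5$ --- a contradiction. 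A short sequence of the same kind of $5$-cycle arguments then shows, in order, that $V(G) \setminus T$ is an independent set, that at most one of the three ``two-vertex'' attachment patterns to $T$ occurs, and that the ``three-vertex'' attachment pattern occurs for at most one vertex and only when no other vertex of $V(G) \setminus T$ is present; together these force $G$ to be a book (when some two-vertex pattern occurs, with the relevant pair of vertices being universal) or $K_4$ (the lone three-vertex case) or $K_3$. Each individual step is just the observation that the relevant closed walk through $T$ is an odd cycle of length $5$; organizing the casework cleanly is the only real work.

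It remains to check $\omega(G) = \vartheta'(\overline{G})$ for each graph on this short list. For bipartite $G$ one has $\omega(G) = \chi(G) \le 2$, so~\eqref{eqn:sandwich} gives $\vartheta'(\overline{G}) = \omega(G)$ immediately; alternatively, when $G$ has an edge, the rank-one matrix $u u^{T}$ with $u = \mathbf{1}_X - \mathbf{1}_Y$ for a bipartition $(X,Y)$ certifies $2(I + \overline{A}) - J \in S_n^{+} + N_n = \mathcal{K}_n$, i.e.\ $\vartheta'(\overline{G}) \le 2$. For $K_4$, trivially $\omega = \vartheta' = 4$. For a book $B_k = K_2 \vee \overline{K_k}$ with $k \ge 1$, the identity $\vartheta'(\overline{G_1 \vee G_2}) = \vartheta'(\overline{G_1}) + \vartheta'(\overline{G_2})$ (Theorem~4.1 of~\cite{BFL}, already used in Section~\ref{subsec:graph_operations}) gives $\vartheta'(\overline{B_k}) = \vartheta'(\overline{K_2}) + \vartheta'(\overline{\overline{K_k}}) = 2 + 1 = 3 = \omega(B_k)$. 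Undoing the reduction to blocks --- using that $\omega$, $\chi$, and hence $\vartheta'(\overline{\cdot})$ are the maxima over blocks --- we obtain $\omega(G) = \vartheta'(\overline{G})$ for every graph $G$ with no odd cycle of length $5$ or more, and Corollary~\ref{cor:G_perfect} then yields that every such graph is perfect.
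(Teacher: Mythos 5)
Your proof is correct, but it takes a genuinely different route from the paper's. The paper's proof is a two-line citation argument: by the ``no long odd cycle'' theorem of Drew and Johnson \cite{DrewJohnson}, for a graph with no odd cycle of length $5$ or more the constraints $X\succeq 0$, $X\geq 0$ in \eqref{eqn:sch_number_complement} can be replaced by complete positivity ($X=BB^T$ with $B\geq 0$) without changing the optimal value, and by de Klerk and Pasechnik \cite{DKLP} the resulting program has value exactly $\omega(G)$; hence $\omega(H)=\vartheta'(\overline{H})$ for every induced subgraph $H$ and Corollary~\ref{cor:G_perfect} applies. You instead prove a structure theorem---every $2$-connected graph in the class is bipartite, $K_4$, or a book $K_2\vee\overline{K_k}$---and verify the equality on this short list; I checked the fan-lemma step and the subsequent $5$-cycle eliminations, and the classification is correct. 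Two remarks. First, your parenthetical claim that $\vartheta'(\overline{\cdot})$ equals the maximum over blocks is not something \eqref{eqn:sandwich} gives you for a general graph; but you do not need it here, since every block on your list satisfies $\omega=\chi$, so $\omega(G)=\chi(G)=\max_i\chi(G_i)$ for every graph in the class and the sandwich then pins $\vartheta'(\overline{G})$ between two equal quantities. Second, and for the same reason, your argument in fact establishes $\omega(H)=\chi(H)$ for every induced subgraph directly, i.e., it is a purely combinatorial proof of perfection that never needs $\vartheta'$, the nonnegativity certificates, or Corollary~\ref{cor:G_perfect} at all. This buys self-containedness (the paper outsources the real content to \cite{DrewJohnson} and \cite{DKLP}), at the cost of the casework and of missing the point the paper is making with this proposition, namely that the algebraic characterization lets one deduce a graph-theoretic statement from facts of linear algebra and optimization.
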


\begin{proof}
Consider a graph $G = (V,E)$ with no odd cycles of length 5 or more. By \cite[Theorem 1]{DrewJohnson}, the optimal value of \eqref{eqn:sch_number_complement} does not change if the two constraints $X\geq0$ and $X\succeq 0$ are replaced with the stronger constraint that $X = BB^T$ for some nonnegative matrix $B$. By \cite[Theorem 2.2]{DKLP}, the optimal value of \eqref{eqn:sch_number_complement} after this replacement is $\omega(G)$. Since these claims hold for every induced subgraph of $G$, it follows from Corollary~\ref{cor:G_perfect} that $G$ is perfect. 
\end{proof}

We hope that future research can similarly provide algebraic proofs of other structural results concerning perfect graphs. Some examples are mentioned in Section~\ref{subsec:revisit_structural}. Other future research directions are discussed in Section~\ref{subsec:other_research}.



\subsection{Algebraic reformulations of structural results concerning perfect graphs} \label{subsec:revisit_structural}

In view of Theorem \ref{thm:main_thm_2}, we reformulate a number of results from the theory of perfect graphs as statements about sum of squares proofs of nonnegativity of certain polynomials.

\medskip
\noindent \textit{\textbf{The weak perfect graph theorem}} \cite{Lovasz2}:

A graph is perfect if and only if its complement is perfect.
\medskip

\noindent The proof of the weak perfect graph theorem was given by Lov\'asz in 1972 and relies on the following lemma which is interesting in its own right. Recall that \emph{replicating} a vertex $v$ of a graph means adding a new vertex to the graph and making it adjacent to $v$ and to all the neighbors of $v$. 

\medskip
\noindent \textit{\textbf{The replication lemma}} \cite{Lovasz2}:

If $G$ is a perfect graph and $G'$ is obtained from $G$ by replicating a vertex of $G$, then $G'$ is perfect.
\medskip

\noindent Other proofs of the weak perfect graph theorem can be obtained by self-complementary characterizations of perfect graphs. The following is one such characterization due to Lov\'asz.

\medskip
\noindent \textit{\textbf{Lov\'asz's characterization of perfect graphs}} \cite{Lovasz3}:

A graph $G$ is perfect if and only if every induced subgraph $H$ of $G$ satisfies $\alpha(H)\omega(H) \geq |V(H)|$.
\medskip

\noindent Lastly, the following (self-complementary) characterization of perfect graphs was proven in~\cite{SPGT} more than forty years after it was conjectured in~\cite{Berge61}.

\medskip
\noindent \textit{\textbf{The strong perfect graph theorem}} \cite{SPGT}: 

A graph is perfect if and only if it does not contain an odd hole or an odd antihole.
\medskip

We now invoke Theorem \ref{thm:main_thm_2} to provide algebraic reformulations of the four structural results above. From~\eqref{eqn:our_polynomial}, recall the definition of the polynomial $p_{G}(x)$ associated with a graph $G$.

\medskip
\noindent \textit{\textbf{The weak perfect graph theorem (algebraic reformulation)}}:

If for every induced subgraph $H$ of a graph $G$, $p_H(x)$ is sos, then $p_{\overline{G}}(x)$ is sos.
\medskip

\begin{remark}
The statement ``if $p_G(x)$ is sos, then $p_{\overline{G}}(x)$ is sos'' is not true in general. To see this, let $G$ be the the graph in Figure~\ref{fig:an_example}. Then, by Corollary~\ref{cor:pgx_nonnegative_sos}\eqref{part_bb}, the polynomial $p_G(x)$ is sos since $\omega(G) = \vartheta'(\overline{G}) = 3$, while the polynomial $p_{\overline{G}}(x)$ is not sos since $\omega(\overline{G}) = 2 < \sqrt{5} = \vartheta'(G)$.
\label{rem:remark6}
\end{remark}

\begin{figure}[h]
\begin{center}
\begin{tikzpicture}[scale=0.22]

\node[inner sep=2.5pt, fill=black, circle] at (0,4.5)(v1){}; 
\node[inner sep=2.5pt, fill=black, circle] at (3.804226, 1.2361)(v2){}; 
\node[inner sep=2.5pt, fill=black, circle] at (2.351141, -3.2361)(v3){}; 
\node[inner sep=2.5pt, fill=black, circle] at (-2.351141, -3.2361)(v4){}; 
\node[inner sep=2.5pt, fill=black, circle] at (-3.804226, 1.2361)(v5){};
\node[inner sep=2.5pt, fill=black, circle] at (-1.5, 1)(v6){};

\draw[black, thick] (v1) -- (v2);
\draw[black, thick] (v1) -- (v5);
\draw[black, thick] (v2) -- (v3);
\draw[black, thick] (v3) -- (v4);
\draw[black, thick] (v4) -- (v5);
\draw[black, thick] (v1) -- (v6);
\draw[black, thick] (v4) -- (v6);
\draw[black, thick] (v5) -- (v6);

\end{tikzpicture}
\end{center}
\vspace{-0.5cm}
\caption{The graph associated with Remark~\ref{rem:remark6}}
\label{fig:an_example}
\end{figure}
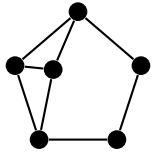

\noindent \textit{\textbf{The replication lemma (algebraic reformulation)}}:

Let $G'$ be the graph obtained from a graph $G$ by replicating a vertex of $G$. If for every induced subgraph $H$ of $G$, $p_H(x)$ is sos, then $p_{G'}(x)$ is sos.
\medskip

\begin{remark}
The statement ``if $p_G(x)$ is sos, then $p_{G'}(x)$ is sos'' is not true in general. To see this, let $G$ be the graph in Figure~\ref{fig:lovasz_replicate} (left), and $G'$ be the graph obtained from $G$ by replicating the vertex $v$ as shown in Figure~\ref{fig:lovasz_replicate} (right). One can verify that $\omega(G) = \vartheta'(\overline{G}) = 3$ while $\omega(G') = 3 < 3.196 < \vartheta'(\overline{G'})$. By Corollary~\ref{cor:pgx_nonnegative_sos}\eqref{part_bb}, the polynomial $p_G(x)$ is sos while the polynomial $p_{G'}(x)$ is not.
\label{rem:remark7}
\end{remark}

\begin{figure}[h]
\begin{center}
\begin{tikzpicture}[scale=0.22]

\node[inner sep=2.5pt, fill=black, circle] at (0,4.5)(v1){}; 
\node[label=right:{\scriptsize{$v$}}, inner sep=2.5pt, fill=black, circle] at (3.804226, 1.2361)(v2){}; 
\node[inner sep=2.5pt, fill=black, circle] at (2.351141, -3.2361)(v3){}; 
\node[inner sep=2.5pt, fill=black, circle] at (-2.351141, -3.2361)(v4){}; 
\node[inner sep=2.5pt, fill=black, circle] at (-3.804226, 1.2361)(v5){};
\node[inner sep=2.5pt, fill=black, circle] at (0, -7)(v6){};
\node[inner sep=2.5pt, fill=black, circle] at (-1.5, 1)(v7){};

\draw[black, thick] (v1) -- (v2);
\draw[black, thick] (v1) -- (v5);
\draw[black, thick] (v2) -- (v3);
\draw[black, thick] (v3) -- (v4);
\draw[black, thick] (v4) -- (v5);
\draw[black, thick] (v3) -- (v6);
\draw[black, thick] (v4) -- (v6);
\draw[black, thick] (v1) -- (v7);
\draw[black, thick] (v4) -- (v7);
\draw[black, thick] (v5) -- (v7);

\end{tikzpicture}
\hspace{2cm}
\begin{tikzpicture}[scale=0.22]

\node[inner sep=2.5pt, fill=black, circle] at (0,4.5)(v1){}; 
\node[label=right:{\scriptsize{$v$}}, inner sep=2.5pt, fill=black, circle] at (3.804226, 1.2361)(v2){}; 
\node[inner sep=2.5pt, fill=black, circle] at (2.351141, -3.2361)(v3){}; 
\node[inner sep=2.5pt, fill=black, circle] at (-2.351141, -3.2361)(v4){}; 
\node[inner sep=2.5pt, fill=black, circle] at (-3.804226, 1.2361)(v5){};
\node[inner sep=2.5pt, fill=black, circle] at (0, -7)(v6){};
\node[inner sep=2.5pt, fill=black, circle] at (-1.5, 1)(v7){};
\node[inner sep=2.5pt, fill=black, circle] at (1.5, 1)(v8){};

\draw[black, thick] (v1) -- (v2);
\draw[black, thick] (v1) -- (v5);
\draw[black, thick] (v2) -- (v3);
\draw[black, thick] (v3) -- (v4);
\draw[black, thick] (v4) -- (v5);
\draw[black, thick] (v3) -- (v6);
\draw[black, thick] (v4) -- (v6);
\draw[black, thick] (v1) -- (v7);
\draw[black, thick] (v4) -- (v7);
\draw[black, thick] (v5) -- (v7);
\draw[black, thick] (v1) -- (v8);
\draw[black, thick] (v2) -- (v8);
\draw[black, thick] (v3) -- (v8);

\end{tikzpicture}
\end{center}
\vspace{-0.6cm}
\caption{The graphs $G$ (left) and $G'$ (right) associated with Remark~\ref{rem:remark7}}
\label{fig:lovasz_replicate}
\end{figure}
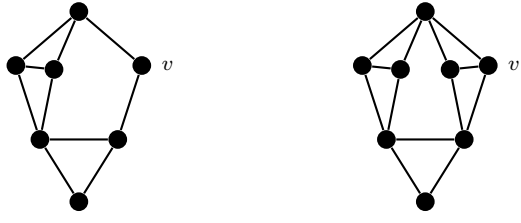

The following is a reformulation of the nontrivial direction of Lov\'asz's characterization of perfect graphs. (The other direction is immediate since $\alpha(G)\chi(G) \geq |V(G)|$ for any graph $G$.)

\medskip
\noindent \textit{\textbf{Lov\'asz's characterization of perfect graphs (algebraic reformulation)}}:

If every induced subgraph $H$ of a graph $G$ satisfies $\alpha(H)\omega(H) \geq |V(H)|$, then $p_G(x)$ is sos.

\medskip
\begin{remark}
The statement ``if $G$ satisfies $\alpha(G)\omega(G) \geq |V(G)|$, then $p_G(x)$ is sos'' is not true in general. As an example, consider the graph $G$ obtained from $C_5$ by adding a vertex adjacent to every vertex of $C_5$. We have $\omega(G) = 3$, $\alpha(G) = 2$, and $|V(G)| = 6$. However, by Corollary~\ref{cor:pgx_nonnegative_sos}\eqref{part_bb}, the polynomial $p_G(x)$ is not sos since $\vartheta'(\overline{G}) = \sqrt{5} +1$.
\end{remark}

The following is a reformulation of the nontrivial direction of the strong perfect graph theorem. (As mentioned in Section~\ref{subsec:perfect_graphs}, the other direction is immediate.)

\medskip
\noindent \textit{\textbf{The strong perfect graph theorem (algebraic reformulation - polynomial version)}}:
 
If a graph $G$ does not contain an odd hole or an odd antihole, then $p_G(x)$ is sos.
\medskip

\noindent Considering \eqref{eqn:omega_copos} and \eqref{eqn:theta_sos}, together with Corollary~\ref{cor:pgx_nonnegative_sos}\eqref{part_bb} and the fact that $\mathcal{K}_n = S_n^+ + N_n$, another reformulation of the strong perfect graph theorem is the following.

\medskip
\noindent \textit{\textbf{The strong perfect graph theorem (algebraic reformulation - matrix version)}}:

If a graph $G$ does not contain an odd hole or an odd antihole, then $\omega(G) (I + \overline{A}) - J \in S_n^+ + N_n$.

\medskip
\noindent We hope that the above algebraic reformulations lead to new (and ideally simpler) proofs of these statements, possibly using tools from convex algebraic geometry and semidefinite optimization. This would be particularly exciting in the case of the strong perfect graph theorem, whose original proof in~\cite{SPGT} is long and highly nontrivial.

\subsection{Other research directions}
\label{subsec:other_research}

We end with a few other research directions that could be of interest.



\subsubsection{Sos decompositions for subclasses of perfect graphs}

By Theorem \ref{thm:main_thm_2}, for any perfect graph $G$, the polynomial $p_G(x)$ is sos. Can one write down an explicit sos decomposition for some subclasses of perfect graphs? For example, for a complete graph $G$ on $n$ vertices, we have the following two possible sos decompositions:
$$p_G(x) = \sum_{ij \in E(G)} (x_i^2 - x_j^2)^2,$$
$$p_G(x) = \sum\limits_{i =1}^{n-1} \frac{n}{(n-i+1)(n-i)} \Big ((n-i)x_i^2 - x_{i+1}^2 - x_{i+2}^2 - \dots - x_n^2 \Big )^2.$$

Similarly, for a bipartite graph $G$ whose vertex set is partitioned into two (independent) sets $A=\{1,2,\dots,p\}$ and $B=\{p+1,p+2,\dots,n\}$, we have the following sos decomposition:
$$p_G(x) = \left( \sum_{i=1}^{p} x_i^2 - \sum_{i=p+1}^{n} x_i^2 \right)^2 + \sum_{i \in A, j \in B, ij \notin E(G)} (2 x_ix_j)^2.$$

Can one similarly write sos decompositions for other subclasses of perfect graphs, e.g., for chordal graphs? More interestingly, does an sos decomposition of $p_G(x)$ contain useful information about the perfect graph $G$?

\subsubsection{Algebraic imperfection ratio}\label{sec:imperfection_ratio}

It would be interesting to study the relationship between the so-called \emph{imperfection ratio} of a graph (see~\cite{GerMcD} for a definition) and an algebraic notion we define below which measures how close a given graph is to being sos-perfect.

Let $p^{\text{min}}_G$ and $p^{\text{max}}_G$ respectively denote the minimum and the maximum value of the nonnegative quartic form $p_G(x)$ on the unit sphere:
\vspace{-0.9cm}
\begin{multicols}{2}
\begin{equation*}
\begin{aligned}
\hspace{2.7cm}
p^{\text{min}}_G \:\:\: \mathrel{\mathop:}= \:\:\:
& \min\limits_{||x||^2=1}
& & p_G(x),
\end{aligned}
\end{equation*}\hspace{0.5cm}
\begin{equation*}
\begin{aligned}
p^{\text{max}}_G \:\:\: \mathrel{\mathop:}= \:\:\:
& \max\limits_{||x||^2=1}
& & p_G(x).
\end{aligned}
\end{equation*}
\end{multicols}

\noindent In general, a (computationally tractable) lower bound on $p^{\text{min}}_G$ can be obtained by
\vspace{-0.1cm}
\begin{equation*}
\hspace{4.9cm}
\begin{aligned}
p^{\text{sos}}_G \:\:\: \mathrel{\mathop:}= \:\:\:
& \max\limits_{\gamma \in \R}
& & \gamma \\
& \text{s.t.}
&& p_G(x) - \gamma ||x||^{4} \text{ is sos}.
\end{aligned}
\label{eq:min_sos_dist}
\end{equation*}
One way to measure the quality of $p^{\text{sos}}_G$ as a lower bound on $p^{\text{min}}_G$ is through the ``normalized'' quantity
$(p^{\text{max}}_G - p^{\text{sos}}_G)/(p^{\text{max}}_G - p^{\text{min}}_G)$. Taking induced subgraphs into consideration, we propose the following as to measure how close a given graph is to being sos-perfect:
\vspace{-0.2cm}
\begin{equation}
\hspace{4.8cm}
\begin{aligned}
\text{AIMP}(G) \mathrel{\mathop:}= \max_{H \subseteq G} \left\{ \frac{p^{\text{max}}_H - p^{\text{sos}}_H}{p^{\text{max}}_H - p^{\text{min}}_H} \right \}.
\end{aligned}
\label{eqn:imp_sos_perf}
\end{equation}
Here, the maximum is taken over all induced subgraphs $H$ of $G$. The following is a simple observation, which we give without proof.
\begin{observation}
For any graph $G$, we have $p^{\text{min}}_G = 0$ and $p^{\text{max}}_G = \omega(G)-1$.
\label{obs:min_max_pG}
\end{observation}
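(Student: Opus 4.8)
The plan is to exploit the elementary fact that, once we restrict to the unit sphere, the quartic $p_G$ collapses to an affine function of the single quantity $\sum_{ij\in E(G)} x_i^2 x_j^2$. First I would impose $\|x\|^2 = 1$, so that $\bigl(\sum_i x_i^2\bigr)^2 = 1$; substituting this into the definition \eqref{eqn:our_polynomial} of $p_G$ gives
\[
p_G(x) \;=\; \bigl(\omega(G)-1\bigr) \;-\; 2\,\omega(G)\sum_{ij\in E(G)} x_i^2 x_j^2 \qquad\text{whenever } \|x\|^2 = 1 .
\]
Hence maximizing $p_G$ on the sphere is the same as minimizing the edge term $\sum_{ij\in E(G)} x_i^2 x_j^2$, and minimizing $p_G$ is the same as maximizing it. Both extrema of the edge term are easy to pin down, so there is no real obstacle; the only point requiring a moment's care is this reduction itself.

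For $p^{\textup{max}}_G$: the edge term $\sum_{ij\in E(G)} x_i^2 x_j^2$ is a sum of nonnegative quantities, hence $\ge 0$, and it equals $0$ at any unit vector supported on an independent set — in particular at $x = e_1$. Therefore $p_G(x) \le \omega(G)-1$ on the sphere, with equality attained, and $p^{\textup{max}}_G = \omega(G)-1$.

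For $p^{\textup{min}}_G$: By Corollary~\ref{cor:pgx_nonnegative_sos}\eqref{part_aa} (equivalently, $p_G = p_{G,\omega(G)}$ together with Lemma~\ref{lem:nonneg_if_clique}\eqref{part_a}), $p_G$ is nonnegative, so $p^{\textup{min}}_G \ge 0$. To see that equality holds I would exhibit a minimizer: let $K$ be a maximum clique of $G$, put $\omega := \omega(G) = |K|$, and let $x$ be the unit vector with $x_i = 1/\sqrt{\omega}$ for $i \in K$ and $x_i = 0$ otherwise. Since every pair of vertices of $K$ is an edge, $\sum_{ij\in E(G)} x_i^2 x_j^2 = \binom{\omega}{2}\cdot \omega^{-2} = (\omega-1)/(2\omega)$, so $p_G(x) = (\omega-1) - 2\omega\cdot(\omega-1)/(2\omega) = 0$, giving $p^{\textup{min}}_G = 0$. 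Alternatively, the substitution $y_i = x_i^2$ identifies $\max_{\|x\|^2=1}\sum_{ij\in E(G)} x_i^2 x_j^2$ with $\max_{y\in\Delta}\sum_{ij\in E(G)} y_i y_j$, which equals $\tfrac12\bigl(1 - 1/\omega(G)\bigr)$ by the Motzkin--Straus identity \eqref{eqn:MS}; substituting back into the boxed display again yields $p^{\textup{min}}_G = 0$. Either route is a one-line computation, so the proof is genuinely short and the "hard part" is only the recognition that $p_G$ becomes affine on the sphere.
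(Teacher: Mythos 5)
Your proof is correct, and since the paper explicitly states this observation ``without proof,'' there is no authors' argument to diverge from; your reduction of $p_G$ on the sphere to the affine expression $(\omega(G)-1) - 2\omega(G)\sum_{ij\in E(G)}x_i^2x_j^2$, together with the evaluation at $e_1$ and at the uniform vector on a maximum clique, is exactly the intended one-line computation. Both of your routes to the minimum (the explicit clique minimizer combined with Corollary~\ref{cor:pgx_nonnegative_sos}\eqref{part_aa}, or the Motzkin--Straus identity~\eqref{eqn:MS}) are valid, including in the degenerate case $\omega(G)=1$ where $p_G\equiv 0$.
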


Notice that since $p^{\text{min}}_G = 0$, we have $p^{\text{sos}}_G \leq 0$ for every graph $G$. By Observation \ref{obs:min_max_pG}, we have
$$\text{AIMP}(G) = \max_{H \subseteq G} \left\{ 1 - \frac{p^{\text{sos}}_H}{\omega(H)-1} \right \}.$$

From~\eqref{eqn:imp_sos_perf}, it is easy to see that $\text{AIMP}(H) \leq \text{AIMP}(G)$ for every induced subgraph $H$ of $G$. Moreover, $\text{AIMP}(G) \geq 1$ for every graph $G$, and $\text{AIMP}(G) = 1$ if and only if $G$ is sos-perfect (i.e., perfect). (These properties of $\text{AIMP}(G)$ hold also for the imperfection ratio that is proposed in~\cite{GerMcD}.) Can one classify interesting families of graphs for which $\text{AIMP}(G)$ is bounded above by a given constant? Is it possible to efficiently approximate the clique number or the chromatic number for these graph families, e.g., by semidefinite programming?

\subsubsection{$r$-sos-perfect graphs}\label{sec:denominator}

Theorem~\ref{thm:main_thm} and the construction of the polynomial $p_G(x)$ provide a framework for studying the relationship between subsets of nonnegative polynomials and subsets or supersets of perfect graphs. In this paper, we establish this relationship for sos, sdsos, and dsos polynomials, but the same question for certain natural supersets of sos polynomials remains open, as we describe next. 

From Artin's solution~\cite{Artin} to Hilbert's 17th problem, we know that for every nonnegative form $p(x)$, there exists a nonzero sos form $q(x)$ such that $p(x)q(x)$ is sos.  (The representation of $p(x)$ as the ratio of two sos forms algebraically certifies nonnegativity of $p(x)$.) For an even nonnegative integer $r$, let us call a graph $G$ \emph{$r$-sos-perfect} if for every induced subgraph $H$ of $G$, there exists a nonzero degree-$r$ sos form $q_H(x)$ with $p_H(x) q_H(x)$ sos. (Here, $p_H(x)$ is defined as in~\eqref{eqn:our_polynomial}.) With this definition, $0$-sos-perfect graphs are precisely perfect graphs. What are $r$-sos-perfect graphs for $r \geq 2$? What is the minimum $r$ that makes odd holes or odd antiholes (or some other families of imperfect graphs) $r$-sos-perfect? Does the answer relate to the algebraic imperfection ratio of these graphs, and if so how?


\section*{Acknowledgements}

We thank Abraar Chaudhry for insightful discussions around Proposition \ref{prop:NLOC} and the anonymous referees for their careful reading of the manuscript.


\end{document}